\newcommand{\R}{\mathbb{R}}
\newcommand{\C}{\mathbb{C}}
\newcommand{\N}{\mathbb{N}}
\newcommand{\Z}{\mathbb{Z}}
\numberwithin{equation}{section}
\theoremstyle{plain}
\newtheorem{thm}[equation]{Theorem}
\newtheorem{lem}[equation]{Lemma}
\newtheorem{prop}[equation]{Proposition}
\theoremstyle{definition}
\newtheorem{defn}[equation]{Definition}
\theoremstyle{remark}
\newcommand{\pair}[2]{\langle #1,#2 \rangle}
\newcommand{\ave}[1]{\langle #1 \rangle}
\newcommand{\good}[0]{\operatorname{good}}
\newcommand{\children}[0]{\operatorname{ch}}
\newcommand{\out}[0]{\operatorname{out}}
\newcommand{\inside}[0]{\operatorname{in}}
\newcommand{\near}[0]{\operatorname{near}}
\newcommand{\dist}[0]{\operatorname{dist}}
\title{A $T(b)$ Theorem on Product Spaces}
\author{Yumeng Ou}\thanks{The author is partially supported by NSF-DMS 0901139 and ARC DP 120100399.}
\address{Department of Mathematics, Brown University, Providence, RI, USA}
\email{yumeng\_ou@brown.edu}
\subjclass[2010]{42B20}
\keywords{bi-parameter singular integral, $Tb$ theorem, bi-parameter paraproducts}
\begin{document}
\maketitle

\begin{abstract}
The main result of this paper is a bi-parameter $T(b)$ theorem for the case that $b$ is a tensor product of two pseudo-accretive functions. In the proof, we also discuss the $L^2$ boundedness of different types of the $b$-adapted bi-parameter paraproducts.
\end{abstract}

\section{Introduction}

The study of the $T(1)$/$T(b)$ type theorems in the multi-parameter setting can be dated back to 1985, when Journ\'e \cite{Jo} proved the first multi-parameter $T(1)$ theorem by treating the singular integral operator as a vector-valued one-parameter operator. The result itself is very elegant except that some partial boundedness of the operator needs to be assumed. More recently, Pott and Villarroya in \cite{PV} prove a new bi-parameter $T(1)$ theorem with much weaker assumptions on the operator, where they formed different types of mixed conditions instead of assuming the partial boundedness. This is the point of view taken by Martikainen in \cite{Ma}, where he proved a representation theorem for bi-parameter singular integral operators which then implies a $T(1)$ result, and in his joint work with Hyt\"onen \cite{HM2}, where they showed a bi-parameter $T(1)$ theorem in spaces of non-homogeneous type.\footnote{The paper \cite{Ma} and \cite{HM2} cite a 2011 version of \cite{PV} which was revised in February 2013. See Section 2 for a further discussion.}

In this paper, for the first time, we prove a $T(b)$ theorem in product spaces, which is a natural extension of the work we have mentioned above. 

\begin{defn}
A function $b\in L^{\infty}(\R^n\times\R^m)$ is called pseudo-accretive if there is a constant $C$ such that for any rectangle $R$ in $\R^n\times\R^m$ with sides parallel to axes, $\frac{1}{|R|}|\int_{R}b|>C$.
\end{defn}

We will only discuss the case when $b=b_1\otimes b_2$, where $b_1$ and $b_2$ are in $L^\infty(\R^n)$ and $L^\infty(\R^m)$, respectively. Then,  the pseudo-accretivity and boundedness of $b$ imply that there exists a constant $C$, such that for any cubes $K\subset \R^n, V\subset\R^m$, $\frac{1}{|K|}|\int_{K}b_1|>C$ and $\frac{1}{|V|}|\int_{V}b_2|>C$, i.e. $b_1$ and $b_2$ are both pseudo-accretive in the classical sense. Although this seems to be too restrictive, it is actually quite natural. Note that $b=1$ falls in this class. Moreover, in all of the papers mentioned above, some partial structures on the operator are required in order to treat those mixed problems risen in the bi-parameter setting. In other words, the singular integral operator itself we are looking at behaves like a tensor product in some sense. It is essential in our argument for $b$ to be a tensor product, otherwise, even defining $Tb$ would become a problem.

Just as in the situation for the bi-parameter $T(1)$ theorems, we still need to assume that besides $T, T^*$, the partial adjoints of $T$ also map $b$ to a $BMO$ function, an assumption shown by Journ\'e \cite{Jo} to be unnecessary for $T$ to be $L^2$ bounded. A more detailed discussion can be found in Section 6 of \cite{Jo}.

The main technique of the proof is to decompose $L^2$ functions into sums of martingale differences adapted to $b$, analyze each part of the sums, and show that they have good enough decay to be summed up. The advantage of analyzing martingale differences is that they are supported on dyadic rectangles, constant on each of their children, and have orthogonality. Martikainen followed a similar strategy in \cite{Ma}, using Haar functions. However, when we treat $b$ instead of $1$, we have to create a bi-parameter $b$-adapted martingale difference decomposition, which makes the estimate of each part of the sum much less transparent. In the one-parameter setting, the idea of using such $b$-adapted martingale difference operators is well known and has been discussed by many authors in their proofs of different types of $Tb$ theorems, such as David, Journ\'e and Semmes \cite{DJS}, Coifman, Jones and Semmes \cite{CJS}, Nazarov, Treil and Volberg \cite{NTV}, Hyt\"onen and Martikainen \cite{HM}. But in the bi-parameter case, the $b$-adapted martingale difference has never been treated before.

The operator $T$ studied in this paper is initially defined as a continuous linear map from $bC^\infty_0(\R^n\times\R^m)$ to its dual. In order to justify the convergence of pairings of martingale differences, we also assume a priori that $T$ is bounded on $L^2(\R^n\times\R^m)$, although we will show that quantitatively the operator norm of $T$ is bounded by some constant depending only on the weak assumptions introduced in the following, but has nothing to do with the assumed $L^2\rightarrow L^2$ norm. Note that although this a priori assumption is often unnecessary, it appears as a hypothesis in the proofs of some $T(1)$ theorems: many authors have added this assumption (\cite{Ma}, \cite{HM2}), even in the one-parameter setting (\cite{NTV}, \cite{Hy}). It is not a consequence of involving $b$, but results from the fact that one has an initially continuously defined operator which is treated dyadically. Thus, we are more interested in showing how those weak assumptions quantitatively control the $L^2\rightarrow L^2$ norm of $T$. However, in some specific examples that we will mention later, this a priori assumption can be removed.

The plan for the paper is the following. First, we introduce the assumptions on the operators as well as necessary preliminary on bi-parameter $b$-adapted martingale differences. Second, before stating and proving the $T(b)$ theorem, we discuss some types of bi-parameter $b$-adapted paraproducts, which will be used later. Next, we give an averaging formula in the same flavor as in \cite{Ma}, which enables us to use the concept of "goodness" of cubes in our estimate. Then, we will move on to the main body of this paper, prove the $T(b)$ theorem by a case by case estimate of terms in the averaging formula.

\section*{Acknowledgement} 
The author would like to thank Jill Pipher for guiding her into this area, suggesting the topic and the numerous fruitful discussions. The author is also grateful to Michael Lacey and Brett Wick for useful discussions during her visit to Georgia Institute of Technology.

\section{Assumptions on the operator}\label{assump}
\subsection*{Bi-parameter $b$-adapted martingale differences}
As a preliminary, we begin with a quick introduction of the martingale difference decomposition adapted to our problem.

Let $\omega^n= (\omega^n_i)_{i \in \Z}$, where $\omega^n_i \in \{0,1\}^n$. Let $\mathcal{D}^n_0$ be the standard dyadic grid on $\R^n$.
We define the shifted dyadic grid $\mathcal{D}^n_{\omega^n} = \{I + \sum_{i:\, 2^{-i} < \ell(I)} 2^{-i}\omega^n_i: \, I \in \mathcal{D}^n_0\} = \{I \dotplus \omega^n: \, I \in \mathcal{D}^n_0\}$, where
$I \dotplus \omega^n := I + \sum_{i:\, 2^{-i} < \ell(I)} 2^{-i}\omega^n_i$. There is a natural probability structure on $(\{0,1\}^n)^{\Z}$, which gives us a random dyadic grid $\mathcal{D}^n_{\omega^n}$ in $\R^n$. When there is no need to specify what is the $\omega^n$, most of the time, we just write $\mathcal{D}^n$ for short. Interested readers can find more detailed discussion of random dyadic grids in \cite{Hy} or \cite{Ma}.

Given a pseudo-accretive function $b=b_1\otimes b_2$, and two fixed dyadic grids $\mathcal{D}^n, \mathcal{D}^m$ in $\R^n, \R^m$, respectively. For each $p\in\Z$, let $\mathcal{D}^n_p$ be the collection of cubes of side length $2^{-p}$ in $\mathcal{D}^n$, we have 
\begin{displaymath}
E^{b_1}_pf=\sum_{I\in\mathcal{D}^n_p}\frac{\int_I fb_1}{\int_I b_1}\chi_I,\quad E^{b_1}_If=\chi_I E^{b_1}_pf.
\end{displaymath}
Similarly, we have $E^{b_2}_q$ and $E^{b_2}_J$ defined for each $q\in\Z, J\in\mathcal{D}^m$. Then their composition is a $b$-adapted double expectation operator:
\begin{displaymath} 
E^b_{p,q}=E^{b_1}_pE^{b_2}_q=E^{b_2}_qE^{b_1}_p.
\end{displaymath}

Let $\Delta^{b_1}_p=E^{b_1}_{p+1}-E^{b_1}_p$, $\Delta^{b_1}_I=\chi_I\Delta^{b_1}_p$ for each $I\in\mathcal{D}^n_p$, and similarly for the other variable.  The $b$-adapted double martingale difference is defined as
\begin{displaymath}
\Delta^b_{p,q}=\Delta^{b_1}_p\Delta^{b_2}_q=\Delta^{b_2}_q\Delta^{b_1}_p.
\end{displaymath}

The following properties can be easily checked:

\begin{enumerate}
\item $\Delta^b_{I\times J}f$ is supported on the dyadic rectangle $I\times J$, and is a constant on each of its children;\\
\item $\int b_1\Delta^b_{p,q}f\,dx_1=\int b_2\Delta^b_{p,q}f\,dx_2=0$;\\
\item $\Delta^b_{p,q}\Delta^b_{k,l}=0$ unless $p=k, q=l$, and in this case it equals $\Delta^b_{p,q}$;\\
\item If $f\in L^2(\R^n\times\R^m)$, then $f=\sum_{p,q}\Delta^b_{p,q}f$ with convergence in $L^2$, and
\begin{displaymath}
\|f\|^2_{L^2}\lesssim\sum_{p,q}\|\Delta^b_{p,q}f\|^2_{L^2}\lesssim\|f\|^2_{L^2}.
\end{displaymath}
\end{enumerate}

Property $(4)$ can be verified by iteration of the one-parameter martingale difference argument in \cite{NTV}.

Moreover, we observe that
\begin{displaymath}
E^{b*}_{p,q}f=E^{b_1*}_pE^{b_2*}_qf=b\sum_{I\in\mathcal{D}^n_p,J\in\mathcal{D}^m_q}\frac{\int_{I\times J}f}{\int_{I\times J}b}\chi_{I\times J}.
\end{displaymath}
and hence
\begin{displaymath}
M_b\Delta^b_{p,q}=\Delta^{b*}_{p,q}M_b,
\end{displaymath}
where $M_bf=bf$ is the multiplication operator by $b$.

We now introduce the assumptions on $T$ that we will need throughout the argument. Fix two pseudo-accretive functions $b=b_1\otimes b_2, b'=b'_1\otimes b'_2$. For simplicity, denote $d=b_1\otimes b'_2$ and $d'=b'_1\otimes b_2$, then obviously $d, d'$ are also pseudo-accretive. 

\subsection*{Full Calder\'on-Zygmund structure}

If $f=f_1\otimes f_2$ and $g=g_1\otimes g_2$ with $f_1, g_1\in C^\infty_0({\R^n})$, $f_2, g_2\in C^\infty_0({\R^m})$, $\mbox{spt} f_1\cap \mbox{spt} g_1=\emptyset$ and $\mbox{spt} f_2\cap\mbox{spt} g_2=\emptyset$, then we have the full kernel representation
\begin{displaymath}
\pair{M_{b'}TM_bf}{g}=\int_{\R^{n+m}}\int_{\R^{n+m}} K(x,y)f(y)g(x)b(y)b'(x)\,dxdy.
\end{displaymath}

The kernel $K: (\R^{n+m}\times\R^{n+m})\setminus \{(x,y)\in\R^{n+m}\times\R^{n+m}:\,x_1=y_1 \;\mbox{or}\; x_2=y_2\}\rightarrow \C$ is assumed to satisfy 

\begin{enumerate}
\item Size condition
\begin{displaymath}
|K(x,y)|\leq C\frac{1}{|x_1-y_1|^n}\frac{1}{|x_2-y_2|^m}.
\end{displaymath}

\item H\"{o}lder conditions

\begin{displaymath}
|K(x,y)-K(x,(y_1,y_2'))-K(x,(y_1',y_2))+K(x,y')|\leq C\frac{|y_1-y_1'|^\delta}{|x_1-y_1|^{n+\delta}}\frac{|y_2-y_2'|^\delta}{|x_2-y_2|^{m+\delta}}
\end{displaymath}
whenever $|y_1-y_1'|\leq|x_1-y_1|/2$ and $|y_2-y_2'|\leq|x_2-y_2|/2$,

\begin{displaymath}
|K(x,y)-K((x_1,x_2'),y)-K((x_1',x_2),y)+K(x',y)|\leq C\frac{|x_1-x_1'|^\delta}{|x_1-y_1|^{n+\delta}}\frac{|x_2-x_2'|^\delta}{|x_2-y_2|^{m+\delta}}
\end{displaymath}
whenever $|x_1-x_1'|\leq|x_1-y_1|/2$ and $|x_2-x_2'|\leq|x_2-y_2|/2$,

\begin{displaymath}
|K(x,y)-K((x_1,x_2'),y)-K(x,(y_1',y_2))+K((x_1,x_2'),(y_1',y_2))|\leq C\frac{|y_1-y_1'|^\delta}{|x_1-y_1|^{n+\delta}}\frac{|x_2-x_2'|^\delta}{|x_2-y_2|^{m+\delta}}
\end{displaymath}
whenever $|y_1-y_1'|\leq|x_1-y_1|/2$ and $|x_2-x_2'|\leq|x_2-y_2|/2$,

\begin{displaymath}
|K(x,y)-K(x,(y_1,y_2'))-K((x_1',x_2),y)+K((x_1',x_2),(y_1,y_2'))|\leq C\frac{|x_1-x_1'|^\delta}{|x_1-y_1|^{n+\delta}}\frac{|y_2-y_2'|^\delta}{|x_2-y_2|^{m+\delta}}
\end{displaymath}
whenever $|x_1-x_1'|\leq|x_1-y_1|/2$ and $|y_2-y_2'|\leq|x_2-y_2|/2$.

\item Mixed H\"{o}lder-size conditions

\begin{displaymath}
|K(x,y)-K((x_1',x_2),y)|\leq C\frac{|x_1-x_1'|^\delta}{|x_1-y_1|^{n+\delta}}\frac{1}{|x_2-y_2|^m}
\end{displaymath}
whenever $|x_1-x_1'|\leq|x_1-y_1|/2$,

\begin{displaymath}
|K(x,y)-K(x,(y_1',y_2))|\leq C\frac{|y_1-y_1'|^\delta}{|x_1-y_1|^{n+\delta}}\frac{1}{|x_2-y_2|^m}
\end{displaymath}
whenever $|y_1-y_1'|\leq|x_1-y_1|/2$,

\begin{displaymath}
|K(x,y)-K((x_1,x_2'),y)|\leq C\frac{1}{|x_1-y_1|^n}\frac{|x_2-x_2'|^\delta}{|x_2-y_2|^{m+\delta}}
\end{displaymath}
whenever $|x_2-x_2'|\leq|x_2-y_2|/2$,

\begin{displaymath}
|K(x,y)-K(x,(y_1,y_2'))|\leq C\frac{1}{|x_1-y_1|^n}\frac{|y_2-y_2'|^\delta}{|x_2-y_2|^{m+\delta}}
\end{displaymath}
whenever $|y_2-y_2'|\leq|x_2-y_2|/2$.
\end{enumerate}

\subsection*{Partial Calder\'on-Zygmund structure}

We also need some C-Z structure on $\R^n$ and $\R^m$ separately to deal with the case when $f,g$ are only separated on one variable. If $f=f_1\otimes f_2, g=g_1\otimes g_2$ and $\mbox{spt} f_1\cap\mbox{spt} g_1=\emptyset$, then we have the partial kernel representation
\begin{displaymath}
\pair{M_{b'}TM_bf}{g}=\int_{\R^n}\int_{\R^n}K_{f_2,g_2}(x_1,y_1)f_1(y_1)g_1(x_1)b_1(y_1)b'_1(x_1)\,dx_1dy_1.
\end{displaymath}

The partial kernel $K_{f_2,g_2}$ defined on $(\R^n\times\R^n)\setminus\{(x_1,y_1)\in\R^n\times\R^n:\,x_1=y_1\}$ is assumed to satisfy the following standard estimates:

\begin{enumerate}
\item Size condition
\begin{equation}\label{parsize}
|K_{f_2,g_2}|\leq C(f_2,g_2)\frac{1}{|x_1-y_1|^n}.
\end{equation}
\item H\"{o}lder conditions
\begin{equation}\label{parH1}
|K_{f_2,g_2}(x_1,y_1)-K_{f_2,g_2}(x_1',y_1)|\leq C(f_2,g_2)\frac{|x_1-x_1'|^\delta}{|x_1-y_1|^{n+\delta}}
\end{equation}
whenever $|x_1-x_1'|\leq |x_1-y_1|/2$,
\begin{equation}\label{parH2}
|K_{f_2,g_2}(x_1,y_1)-K_{f_2,g_2}(x_1,y_1')|\leq C(f_2,g_2)\frac{|y_1-y_1'|^\delta}{|x_1-y_1|^{n+\delta}}
\end{equation}
whenever $|y_1-y_1'|\leq |x_1-y_1|/2$.
\end{enumerate}

This assumption is in the same flavor of \cite{Ma}, and is important of defining $T(b)$. In fact, we can weaken this by assuming the above only for the cases when
\begin{displaymath}
(f_2,g_2)=(\chi_V,\chi_V),\quad (\chi_V,u_Vb^{'-1}_2),\quad\mbox{or}\quad (u_Vb^{-1}_2,\chi_V), 
\end{displaymath} 
for any cube $V\subset\R^m$, and $u_V$ being a $V$-adapted function with zero-mean (i.e. $\mbox{spt}u_V\subset V$, $|u_V|\leq 1$ and $\int u_V=0$).

We also need to assume that there exists a universal constant $C$, such that  
\begin{displaymath}
C(\chi_V,\chi_V)+C(\chi_V,u_Vb'^{-1}_2)+C(u_Vb_2^{-1},\chi_V)\leq C|V|.
\end{displaymath}

It is easily shown that both full and partial kernel representations also hold when $f,g$ are finite linear combinations of characteristic functions, or even tensor products of compactly supported $L^\infty$ functions, as long as for the required variable, they are still disjointly supported. To see this, when taking those functions, following from the standard condition on the kernels, both integrals are still convergent. We can use them to define the corresponding bilinear forms. After we finally show that $T$ is bounded on $L^2$ (here we don't even need the boundedness assumption on $T$ a priori), use the density of $C^\infty_0$ functions and Lebesgue dominated convergence theorem, we can show that the bilinear form has to be equal to the kernel representation, hence is well defined.

The  partial C-Z structure assumption is natural. Recall how Journ\'e defined his class of operators in \cite{Jo}. Rephrasing in terms of our definition, Journ\'e assumed that the partial kernel $K_{f_2,g_2}(x_1,y_1)$ is a bilinear form associated with a $\mathcal{L}(L^2(\R^m),L^2(\R^m))$ valued standard C-Z kernel, which then implies the size and H\"older conditions (\ref{parsize}), (\ref{parH1}), (\ref{parH2}). In the bi-parameter setting, the partial C-Z structure assumptions are required to both define $Tb$ and to handle the "mixed cases". That arise because of the independent behavior in each variable. (See Section 6, 7, 9, 12 for discussions of different "mixed cases"). As far as we know, all the previous literature in this area needs some assumptions about the partial C-Z structure of the operator. For example, in Pott and Villarroya's most recent version of \cite{PV}, they included such an assumption on the operator so that they can fully justify the definition of $T1$. Although it is formulated a little differently, but is in spirit the same as ours. Martikainen (\cite{Ma}) also requires a similar assumption. (See Section 2 of \cite{Ma}).

Note that in the case $f,g$ are separated in both variables, i.e. when we have the full kernel representation, the partial kernels are just
\begin{displaymath}
K_{f_2,g_2}(x_1,y_1)=\int_{\R^m}\int_{\R^m}K(x,y)f_2(y_2)g_2(x_2)b_2(y_2)b'_2(x_2)\,dx_2dy_2,
\end{displaymath}
and both of the size and H\"older conditions follow easily.

We also assume that the symmetric partial kernel representation and corresponding conditions on kernel $K_{f_1,g_1}$ in the case $\mbox{spt}f_2\cap\mbox{spt}g_2=\emptyset$.

\subsection*{Weak boundedness property}
We assume that there exists a constant $C$ such that, for any cube $K\subset\R^n$ and $V\subset\R^m$,
\begin{displaymath}
|\pair{M_{b'}TM_b(\chi_K\otimes\chi_V)}{\chi_K\otimes\chi_V}|\leq C|K||V|.
\end{displaymath}

\subsection*{BMO conditions}
We assume $Tb, T^*b', T_1d', T_1^*d\in BMO(\R^n\times\R^m)$, where $T_1$ is the partial adjoint of $T$ defined by
\begin{displaymath}
\pair{T_1(f_1\otimes f_2)}{g_1\otimes g_2}=\pair{T(g_1\otimes f_2)}{f_1\otimes g_2}.
\end{displaymath}

Here, by assuming that they are in $BMO(\R^n\times\R^m)$, equivalently, we mean that they are in $BMO_d(\R^n\times\R^m)$, the dyadic $BMO$ space for any dyadic grid. It is proved by Pipher and Ward \cite{PW} that in the bi-parameter setting, the product $BMO$ is the average of dyadic $BMO$. This result is then reproved and extended to multi-parameter by Treil \cite{Tr} through a different method. We now run into a problem of defining $Tb$ (and similarly for the other three functions). In order to do this, we are going to show that $Tb$ lies in the dual of some properly selected subspace $A$ of $H^1_d(\R^n\times\R^m)$, i.e. the bilinear form $\pair{g}{Tb}$ is well defined for any $g\in A$.

Let $A$ be the space consisting of all the functions
\begin{displaymath}
b'\sum_{\mbox{finite}I,J}\Delta_I^{b'_1}\Delta_J^{b'_2}f
\end{displaymath}
where $f\in C^\infty_0(\R^n\times\R^m)$, $I\in\mathcal{D}^n, J\in\mathcal{D}^m$ and we are summing over a finite number of terms. It is easily seen that $A$ is indeed a subspace of $H^1_d(\R^n\times\R^m)$. Hence, by linearity, it suffices to define $\pair{b'\Delta_I^{b'_1}\Delta_J^{b'_2}f}{Tb}$.

Divide the bilinear form into four parts:
\begin{displaymath}
\begin{split}
&\pair{b'\Delta_I^{b'_1}\Delta_J^{b'_2}f}{T(b\chi_{3I}\otimes\chi_{3J})}+\pair{b'\Delta_I^{b'_1}\Delta_J^{b'_2}f}{T(b\chi_{3I}\otimes\chi_{(3J)^c})}\\
&+\pair{b'\Delta_I^{b'_1}\Delta_J^{b'_2}f}{T(b\chi_{(3I)^c}\otimes\chi_{3J})}+\pair{b'\Delta_I^{b'_1}\Delta_J^{b'_2}f}{T(b\chi_{(3I)^c}\otimes\chi_{(3J)^c})}.
\end{split}
\end{displaymath}

Part one: $\Delta_I^{bÔ_1}\Delta_J^{bÕ_2}f$ is a finite linear combination of characteristic functions. For each $I_i\in\mbox{ch}(I),J_j\in\mbox{ch}(J)$, 
\begin{displaymath}
\pair{b'\chi_{I_i}\otimes\chi_{J_j}\Delta_I^{b'_1}\Delta_J^{b'_2}f}{T(b\chi_{3I}\otimes\chi_{3J})}=\Delta_I^{b'_1}\Delta_J^{b'_2}f|_{I_i\times J_j}\pair{bÔ\chi_{I_i}\otimes\chi_{J_j}}{T(b\chi_{3I}\otimes\chi_{3J})},
\end{displaymath}

and
\begin{displaymath}
\begin{split}
&\pair{b'\chi_{I_i}\otimes\chi_{J_j}}{T(b\chi_{3I}\otimes\chi_{3J})}\\
&=\pair{b'\chi_{I_i}\otimes\chi_{J_j}}{T(b\chi_{I_i}\otimes\chi_{J_j})}+\pair{b'\chi_{I_i}\otimes\chi_{J_j}}{T(b\chi_{I_i}\otimes\chi_{3J\setminus J_j})}\\
&\qquad+\pair{b'\chi_{I_i}\otimes\chi_{J_j}}{T(b\chi_{3I\setminus I_i}\otimes\chi_{J_j})}+\pair{b'\chi_{I_i}\otimes\chi_{J_j}}{T(b\chi_{3I\setminus I_i}\otimes\chi_{3J\setminus J_j})}.
\end{split}
\end{displaymath}
In the above, the first term makes sense due to the weak boundedness property. The second and third terms can be dealt with using partial kernel representation. Finally, the last term can be defined using full kernel representation.

Part two (and similarly for part three): Write 
\begin{displaymath}
\begin{split}
\pair{b'\Delta_I^{b'_1}\Delta_J^{b'_2}f}{T(b\chi_{3I}\otimes\chi_{(3J)^c})}&=\sum_{i=1}^{2^n}\pair{b'\chi_{I_i}\Delta_I^{b'_1}\Delta_J^{b'_2}f}{T(b\chi_{3I}\otimes\chi_{(3J)^c})}\\
&=\sum_{i=1}^{2^n}\pair{b'\chi_{I_i}(x_1)\otimes\Delta_I^{b'_1}\Delta_J^{b'_2}f|_{I_i}(x_2)}{T(b\chi_{3I}\otimes\chi_{(3J)^c})}.
\end{split}
\end{displaymath}
Then for each term in the above, since the functions have good separation on one variable, we know that in the case that everything is compactly supported, it has a partial kernel representation:
\begin{displaymath}
\begin{split}
&\pair{b'\chi_{I_i}(x_1)\otimes\Delta_I^{b'_1}\Delta_J^{b'_2}f(x_2)}{T(b\chi_{3I}\otimes\chi_{(3J)^c})}\\
&=\int_{(3J)^c}\int_J K_{\chi_{3I},\chi_{I_i}}(x_2,y_2)b_2(y_2)\Delta_I^{b'_1}\Delta_J^{b'_2}f|_{I_i}(x_2)b'_2(x_2)\,dx_2dy_2\\
&=\int_{(3J)^c}\int_J\left[K_{\chi_{3I},\chi_{I_i}}(x_2,y_2)-K_{\chi_{3I},\chi_{I_i}}(c_J,y_2)\right]b_2(y_2)\Delta_I^{b'_1}\Delta_J^{b'_2}f|_{I_i}(x_2)b'_2(x_2)\,dx_2dy_2,
\end{split}
\end{displaymath}

While the integrand is not compactly supported, and the H\"{o}lder condition for partial kernels implies that the integral is convergent, it can be used to serve as the definition of the bilinear form on the left hand side.

Part four: In this part, the functions have good separations on both variables. As above, although we don't have a full kernel representation for the bilinear form directly due to the fact that the integrand is not compactly supported, we can define it as follows:
\begin{displaymath}
\int_{(3I)^c\times(3J)^c}\int_{I\times J} K(x,y)b(y)b'(x)\Delta_I^{b'_1}\Delta_J^{b'_2}f(x)\,dxdy
\end{displaymath}
and prove that the integral does converge. To see this last fact, we change $K(x,y)$ to
\begin{displaymath}
K(x,y)-K(c_I,x_2,y)-K(x_1,c_J,y)+K(c_I,c_J,y)
\end{displaymath}
by cancellation. Then the H\"{o}lder condition for the full kernel will imply the convergence of the integral.

Note that in parts two, three and four, we don't give an arbitrary definition to those bilinear forms. A simple limiting argument shows that they are well defined. Consider part four for example. Let $\varphi$ be a cut-off function, such that $\varphi=1$ on $I\times J$, and $\varphi=0$ outside $3I\times 3J$. Denote dilation $D_{k_1,k_2}\varphi(x)=\varphi(x_1k_1^{-1},x_2k_2^{-1})$. Since $\Delta_I^{b'_1}\Delta_J^{b'_2}f$ is a finite linear combination of characteristic functions, by the linearity of bilinear forms and full kernel representations, we have
\begin{displaymath}
\begin{split}
&\pair{b'\Delta_I^{b'_1}\Delta_J^{b'_2}f}{T(bD_{k_1,k_2}\varphi\chi_{(3I)^c}\otimes\chi_{(3J)^c})}\\
&=\int_{(3I)^c\times(3J)^c}\int_{I\times J} K(x,y)D_{k_1,k_2}\varphi(y)b(y)b'(x)\Delta_I^{b'_1}\Delta_J^{b'_2}f(x)\,dxdy.
\end{split}
\end{displaymath}
Changing the kernel and using the H\"{o}lder condition for the full kernel as above, together with the boundedness of $f$ and $\varphi$, we can show that the integrand is uniformly bounded by a constant multiple of
\begin{displaymath}
\frac{1}{|x_1-y_1|^{n+\delta}}\frac{1}{|x_2-y_2|^{m+\delta}}.
\end{displaymath}
Then the Lebesgue dominated convergence theorem implies that
\begin{displaymath}
\begin{split}
&\pair{b'\Delta_I^{b'_1}\Delta_J^{b'_2}f}{T(b\chi_{(3I)^c}\otimes\chi_{(3J)^c})}\\
&=\lim_{k_1,k_2\rightarrow\infty}\pair{b'\Delta_I^{b'_1}\Delta_J^{b'_2}f}{T(bD_{k_1,k_2}\varphi\chi_{(3I)^c}\otimes\chi_{(3J)^c})}\\
&=\int_{(3I)^c\times(3J)^c}\int_{I\times J} K(x,y)b(y)b'(x)\Delta_I^{b'_1}\Delta_J^{b'_2}f(x)\,dxdy.
\end{split}
\end{displaymath}
And it's easily seen that the above definition is independent of the choice of $\varphi$.

Hence, $Tb$ lies in the dual of $A$. By saying that it belongs to $BMO_d(\R^n\times\R^m)$, we mean that it is bounded on $A$ and can be boundedly extended to a functional defined on the whole $H^1_d(\R^n\times\R^m)$. And we can use the same technique above to give meanings to the other three objects similarly. Note that we can actually weaken this $BMO$ assumption by only assuming that $T(b)$ is a functional on $A$, and similarly for the other three (but with differently chosen subspaces of $H^1(\R^n\times \R^m)$). We will see in the following that this is all we need.

\subsection*{Diagonal BMO conditions}
There exists constant $C$ such that, for any cube $K\subset\R^n$, $V\subset\R^m$, and any zero-mean functions $a_K$, $b_V$ which are $K$, $V$ adapted, respectively, the following hold:
\begin{itemize}
\item $|\pair{M_{b'}TM_b(a_Kb_1^{-1}\otimes\chi_V)}{\chi_K\otimes\chi_V}|\leq C|K||V|$
\item $|\pair{M_{b'}TM_b(\chi_K\otimes\chi_V)}{a_Kb'^{-1}_1\otimes\chi_V}|\leq C|K||V|$
\item $|\pair{M_{b'}TM_b(\chi_K\otimes b_Vb_2^{-1})}{\chi_K\otimes\chi_V}|\leq C|K||V|$
\item $|\pair{M_{b'}TM_b(\chi_K\otimes\chi_V)}{\chi_K\otimes b_Vb'^{-1}_2}|\leq C|K||V|$
\end{itemize}

\section{Bi-parameter $b$-adapted paraproducts}

In this section, we will discuss the boundedness of three different kinds of bi-parameter $b$-adapted paraproducts that will be used in the proof of our $T(b)$ theorem.

\subsection*{Partial paraproducts}
By partial paraproduct we mean a classical one-parameter $b$-adapted paraproduct with respect to one variable. 

\begin{defn}
Let $a\in BMO(\R^m)$. Then, for two fixed pseudo-accretive functions $b_2,b'_2\in L^\infty(\R^m)$, the operator $\pi^{b'_2,b_2}_a$ is a partial paraproduct, acting on functions on $\R^m$:
\begin{displaymath}
\pi^{b'_2,b_2}_a(f)=\sum_{V\in\mathcal{D}^m}\ave{f}^{b'_2}_VM_{b_2}\Delta^{b_2}_Va.
\end{displaymath}
Similarly, there is a symmetric partial paraproduct with respect to the other variable for fixed pseudo-accretive functions $b_1,b'_1\in L^\infty(\R^n)$, acting on functions on $\R^n$.
\end{defn}

\begin{prop}
Partial paraproducts are bounded operators on $L^2$. Specifically,
\begin{displaymath}
\|\pi^{b'_2,b_2}_a(f)\|_{L^2(\R^m)}\lesssim\|a\|_{BMO}\|f\|_{L^2(\R^m)},
\end{displaymath}
and a similar inequality holds for the symmetric one.
\end{prop}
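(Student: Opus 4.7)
The plan is to prove this by $L^2$-duality, reducing the estimate to two standard ingredients: the fact that the $b_2$-martingale differences of a $BMO$ function form a Carleson sequence, and the $b'_2$-adapted dyadic Carleson embedding theorem. Testing against an arbitrary $g \in L^2(\R^m)$, the task becomes to bound
\begin{displaymath}
\langle \pi^{b'_2,b_2}_a f, g\rangle = \sum_{V\in\mathcal{D}^m} \ave{f}^{b'_2}_V \langle M_{b_2}\Delta^{b_2}_V a, g\rangle
\end{displaymath}
by $\|a\|_{BMO}\|f\|_{L^2}\|g\|_{L^2}$.

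The key observation is that each inner pairing collapses to a diagonal expression. Since $\int b_2\Delta^{b_2}_V a = 0$ (the one-parameter analog of property (2) in Section 2), one may replace $g$ in the integrand by $g - \ave{g}^{b_2}_V$. On $V$ one has the $b_2$-martingale expansion $(g - \ave{g}^{b_2}_V)\chi_V = \sum_{V'\subseteq V}\Delta^{b_2}_{V'} g$, and since $\Delta^{b_2}_V a$ is constant on each child of $V$ while $\int b_2\Delta^{b_2}_{V'} g = 0$ for every $V' \subsetneq V$, every cross term with $V' \subsetneq V$ vanishes, leaving
\begin{displaymath}
\langle M_{b_2}\Delta^{b_2}_V a, g\rangle = \int b_2\, \Delta^{b_2}_V a \cdot \Delta^{b_2}_V g.
\end{displaymath}
A Cauchy-Schwarz at the child level of $V$ then gives $|\langle M_{b_2}\Delta^{b_2}_V a, g\rangle| \lesssim \|b_2\|_{L^\infty}\|\Delta^{b_2}_V a\|_{L^2}\|\Delta^{b_2}_V g\|_{L^2}$.

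It remains to verify that $\alpha_V := \|\Delta^{b_2}_V a\|^2_{L^2}$ is a Carleson sequence of mass $\lesssim\|a\|^2_{BMO}$ and to apply Carleson embedding. For the former, property (4) gives $\sum_{V\subseteq V_0}\alpha_V = \|(a - \ave{a}^{b_2}_{V_0})\chi_{V_0}\|^2_{L^2}$, and since $\ave{a}^{b_2}_{V_0}-\ave{a}_{V_0} = \ave{b_2(a-\ave{a}_{V_0})}_{V_0}/\ave{b_2}_{V_0}$ is $\lesssim \|a\|_{BMO}$ thanks to pseudo-accretivity and boundedness of $b_2$, the standard $BMO$ oscillation estimate finishes this step. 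Finally, Cauchy-Schwarz over $V$ together with property (4) applied to $g$ reduces the full bound to
\begin{displaymath}
\sum_V |\ave{f}^{b'_2}_V|^2 \alpha_V \lesssim \|a\|^2_{BMO}\|f\|^2_{L^2},
\end{displaymath}
which is precisely the $b'_2$-adapted dyadic Carleson embedding theorem. I expect this last step to be the main technical obstacle: its proof proceeds by a stopping-time argument on $b'_2$-averages, where pseudo-accretivity of $b'_2$ is what ensures that the $b'_2$-weighted dyadic maximal function is comparable to the ordinary one and hence is $L^2$-bounded.
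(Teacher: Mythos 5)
Your proof follows essentially the same approach as the paper: dualize against $g\in L^2$, collapse the pairing to the diagonal term $\int b_2\,\Delta^{b_2}_V a\cdot\Delta^{b_2}_V g$ (the paper achieves this via the identity $M_{b_2}\Delta^{b_2}_V=\Delta^{b_2*}_V M_{b_2}$ rather than your explicit cancellation of cross terms, but the content is identical), apply Cauchy--Schwarz, verify that $\{\|\Delta^{b_2}_V a\|^2_{L^2}\}_V$ is a Carleson sequence, and finish with Carleson embedding. The one place you overcomplicate is the very last step: rather than introducing a $b'_2$-adapted Carleson embedding theorem with a stopping-time argument on $b'_2$-averages, the paper simply observes that pseudo-accretivity and $L^\infty$-boundedness of $b'_2$ give $|\ave{f}^{b'_2}_V|\lesssim\ave{|f|}_V$ pointwise, which immediately reduces the estimate $\sum_V|\ave{f}^{b'_2}_V|^2\alpha_V\lesssim\|a\|^2_{BMO}\|f\|^2_{L^2}$ to the classical (unweighted) dyadic Carleson embedding theorem applied to $|f|$; no adapted maximal function comparison is needed.
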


\begin{proof}
We only prove the first inequality. For any $f,g\in L^2(\R^m)$,
\begin{displaymath}
\begin{split}
|\pair{\pi^{b'_2,b_2}_a(f)}{g}|&=|\pair{\sum_{V\in\mathcal{D}^m}\ave{f}^{b'_2}_VM_{b_2}\Delta^{b_2}_Va}{g}|\\
&=|\pair{\Delta^{b_2*}_V\sum_{V}\ave{f}^{b'_2}_VM_{b_2}\Delta^{b_2}_Va}{g}|\\
&=|\pair{\sum_{V}\ave{f}^{b'_2}_VM_{b_2}\Delta^{b_2}_Va}{\Delta^{b_2}_Vg}|\\
&\lesssim\sum_{V}\|\ave{f}^{b'_2}_V\Delta^{b_2}_Va\|_{L^2(\R^m)}\|\Delta^{b_2}_Vg\|_{L^2(\R^m)}\\
&\leq (\sum_{V}\|\ave{f}^{b'_2}_V\Delta^{b_2}_Va\|^2_{L^2(\R^m)})^{1/2}(\sum_{V}\|\Delta^{b_2}_Vg\|^2_{L^2(\R^m)})^{1/2}\\
&\lesssim (\sum_{V}|\ave{f}^{b'_2}_V|^2\|\Delta^{b_2}_Va\|^2_{L^2(\R^m)})^{1/2}\|g\|_{L^2(\R^m)}.
\end{split}
\end{displaymath}
where the fourth and fifth lines follow from H\"{o}lder inequality. Hence, it suffices to show that
\begin{displaymath}
\sum_{V}|\ave{f}^{b'_2}_V|^2\|\Delta^{b_2}_Va\|^2_{L^2(\R^m)}\lesssim \|a\|^2_{BMO}\|f\|^2_{L^2(\R^m)}.
\end{displaymath}
To see this, by the boundedness of $b'_2$,
\begin{displaymath}
|\ave{f}^{b'_2}_V|\lesssim |V|^{-1}\int_V|f|=\ave{|f|}_V.
\end{displaymath}
Hence, it suffices to prove
\begin{displaymath}
\sum_V|\ave{|f|}_V|^2\|\Delta^{b_2}_Va\|^2_{L^2}\lesssim \|a\|^2_{BMO}\||f|\|^2_{L^2}.
\end{displaymath}
Observing the above inequality, we see that by Carleson embedding theorem, all we need is to show that $\{\|\Delta^{b_2}_Va\|^2_{L^2}\}_V$ is a Carleson sequence with constant $\lesssim \|a\|^2_{BMO}$, i.e.
\begin{displaymath}
\forall J\in\mathcal{D}^m,\quad \sum_{I\in\mathcal{D}(J)}\|\Delta^{b_2}_Ia\|^2_{L^2}\lesssim\|a\|^2_{BMO} |J|.
\end{displaymath}
And this is not hard to prove since the $b$-adapted martingale differences satisfy the $L^2$ property by \cite{NTV}. Indeed, since $\|a\|^2_{BMO}=\sup_J\frac{1}{|J|}\int_J|a-\ave{a}_J|^2<\infty$, for any fixed dyadic $J$,
\begin{displaymath}
\begin{split}
\|a\|^2_{BMO}|J|&\geq \int_J|a-\ave{a}_J|^2=\|\chi_J(a-\ave{a}_J)\|^2_{L^2}\\
&\approx \sum_{I}\|\Delta^{b_2}_I(\chi_J(a-\ave{a}_J))\|^2_{L^2}\\
&\geq \sum_{I\in\mathcal{D}(J)}\|\Delta^{b_2}_I(\chi_J(a-\ave{a}_J))\|^2_{L^2}=\sum_{I\in\mathcal{D}(J)}\|\Delta^{b_2}_I(a-\ave{a}_J)\|^2_{L^2}\\
&=\sum_{I\in\mathcal{D}(J)}\|\Delta^{b_2}_Ia\|^2_{L^2}.
\end{split}
\end{displaymath}
where the last equality is because $\Delta^{b_2}_I$ maps any constant function to 0. And this completes the proof.
\end{proof}

\subsection*{Full paraproducts}
We now introduce a "real" bi-parameter $b$-adapted paraproduct, which is a natural generalization of the classical one-parameter one.

\begin{defn}
For $a\in BMO(\R^n\times\R^m)$, operator $\pi^{b',b}_a$ is called full paraproduct, defined as
\begin{displaymath}
\pi^{b',b}_a(f)=\sum_{K\in\mathcal{D}^n,V\in\mathcal{D}^m}\ave{f}^{b'}_{K\times V}M_b\Delta^{b_1}_K\Delta^{b_2}_Va.
\end{displaymath}
\end{defn}

\begin{prop}\label{fullpara}
Full paraproducts are bounded operators on $L^2(\R^n\times\R^m)$. Specifically,
\begin{displaymath}
\|\pi^{b',b}_a(f)\|_{L^2(\R^n\times\R^m)}\lesssim \|a\|_{BMO(\R^n\times\R^m)}\|f\|_{L^2(\R^n\times\R^m)}.
\end{displaymath}
\end{prop}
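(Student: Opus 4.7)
The plan is to mirror the proof for the partial paraproduct, with the one-parameter Carleson embedding replaced by its bi-parameter (Chang--Fefferman/Journ\'e) analog. I would begin by testing against an arbitrary $g\in L^2(\R^n\times\R^m)$ and exploiting the two structural identities from Section~\ref{assump}: the intertwining $M_b\Delta^b_{p,q}=\Delta^{b*}_{p,q}M_b$ and the idempotency $\Delta^b_{p,q}\Delta^b_{p,q}=\Delta^b_{p,q}$. Together they give
\begin{displaymath}
M_b\Delta^{b_1}_K\Delta^{b_2}_V a=\Delta^{b*}_{K,V}\bigl(M_b\Delta^{b_1}_K\Delta^{b_2}_V a\bigr),
\end{displaymath}
so moving $\Delta^{b*}_{K,V}$ across the pairing to act on $g$ as $\Delta^{b_1}_K\Delta^{b_2}_V g$, followed by Cauchy--Schwarz on each summand, yields
\begin{displaymath}
|\pair{\pi^{b',b}_a f}{g}|\leq\sum_{K,V}|\ave{f}^{b'}_{K\times V}|\,\|M_b\Delta^{b_1}_K\Delta^{b_2}_V a\|_{L^2}\,\|\Delta^{b_1}_K\Delta^{b_2}_V g\|_{L^2}.
\end{displaymath}

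A second Cauchy--Schwarz in $(K,V)$ then factors the right-hand side. Property~(4) of the bi-parameter $b$-adapted martingale differences controls the $g$-factor by $\|g\|_{L^2}$. Since $b,b'\in L^\infty$ and $b'$ is pseudo-accretive, $|\ave{f}^{b'}_{K\times V}|\lesssim\ave{|f|}_{K\times V}$ and $\|M_b\Delta^{b_1}_K\Delta^{b_2}_V a\|_{L^2}\lesssim\|\Delta^{b_1}_K\Delta^{b_2}_V a\|_{L^2}$, so it suffices to prove
\begin{displaymath}
\sum_{K\in\mathcal{D}^n,\,V\in\mathcal{D}^m}\ave{|f|}^2_{K\times V}\,\|\Delta^{b_1}_K\Delta^{b_2}_V a\|^2_{L^2}\lesssim\|a\|^2_{BMO(\R^n\times\R^m)}\|f\|^2_{L^2}.
\end{displaymath}

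This is a bi-parameter Carleson embedding. Setting $\alpha_{K\times V}:=\|\Delta^{b_1}_K\Delta^{b_2}_V a\|^2_{L^2}$, I would establish the product Carleson condition
\begin{displaymath}
\sum_{K\times V\subset\Omega}\alpha_{K\times V}\lesssim\|a\|^2_{BMO}|\Omega|\quad\text{for every open }\Omega\subset\R^n\times\R^m,
\end{displaymath}
which is the $b$-adapted version of the Chang--Fefferman characterization of product $BMO$. Given this, the classical bi-parameter embedding theorem (whose proof invokes the strong maximal function and a stopping-time decomposition of $\Omega$) delivers $\sum_R\alpha_R\ave{|f|}^2_R\lesssim\|f\|^2_{L^2}$, completing the estimate.

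The main obstacle lies in this last step. Unlike the partial-paraproduct case, it is \emph{not} sufficient to verify the Carleson condition only on dyadic rectangles (a naive iteration of the one-parameter John--Nirenberg estimate, as in Proposition on partial paraproducts, would produce only the strictly weaker rectangular $BMO$ constant). One must test over all open sets, which is the well-known subtlety of product $BMO$. I expect to handle this by combining the one-parameter $b$-adapted Carleson characterization from \cite{NTV} (applied in each variable to slices of $a$) with a Journ\'e-type covering lemma, or equivalently by invoking the $H^1_d$--$BMO_d$ duality in the product setting as in Pipher--Ward \cite{PW} and Treil \cite{Tr}, adapted to the $b$-adapted martingale differences.
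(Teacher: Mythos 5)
Your reduction is sound up to the bi-parameter Carleson embedding: the intertwining and idempotency manipulations, the two Cauchy--Schwarz steps, and the open-set embedding theorem (which follows from the layer-cake formula plus $L^2$ boundedness of the strong maximal function) are all correct, and you have properly diagnosed that a rectangular Carleson condition would \emph{not} suffice. However, the step you flag as the main obstacle is genuinely open in your write-up. You need to prove that $a\in BMO(\R^n\times\R^m)$ (the dyadic product $BMO$ defined in the paper via $H^1_d$--$BMO_d$ duality) implies the $b$-adapted open-set Carleson condition $\sum_{K\times V\subset\Omega}\|\Delta^{b_1}_K\Delta^{b_2}_V a\|^2_{L^2}\lesssim\|a\|^2_{BMO}|\Omega|$. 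This is not a direct application of Chang--Fefferman (which concerns Haar coefficients, and $\Delta^b_R a$ has no pointwise comparison with $\Delta_R a$), nor of Pipher--Ward/Treil (which concern averages of dyadic $BMO$ over grids, not $b$-adapted differences). The clean route to it is by duality from the $b$-adapted square-function characterization of $H^1_b$, which is precisely Theorem~\ref{squaremax} of the paper, proved there through the atomic decomposition of $K^1_b$ (Theorem~\ref{atom}). Your proposal therefore does not bypass that machinery; it merely invokes it in dual form, and you have not carried out the deduction. Your first alternative (one-parameter slices plus a Journ\'e covering) runs directly into the rectangular-versus-product pitfall you warn against a sentence earlier and is unlikely to close the gap as stated.

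For comparison, the paper sidesteps the Carleson embedding entirely: it moves the whole operator onto $g$ to write the pairing as $\pair{a}{M_b\sum_{K,V}\ave{f}^{b'}_{K\times V}\Delta^{b_1}_K\Delta^{b_2}_Vg}$, applies product $H^1$--$BMO$ duality to detach $\|a\|_{BMO}$, bounds the resulting $H^1_b$ norm by $\|S_b(\cdot)\|_{L^1}$ via Theorem~\ref{squaremax}, and then finishes with the pointwise estimate $S_b^2\bigl(\sum_{K,V}\ave{f}^{b'}_{K\times V}\Delta^{b_1}_K\Delta^{b_2}_Vg\bigr)\lesssim|M^Sf|^2 S_b^2(g)$ together with Cauchy--Schwarz and the $L^2$ bounds on $M^S$ and $S_b$. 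The hard analytic input is concentrated in Theorem~\ref{squaremax}, after which Proposition~\ref{fullpara} is elementary and no Carleson condition needs to be verified. If you want to complete your version, deduce the $b$-adapted open-set Carleson condition from Theorem~\ref{squaremax} by duality before invoking the embedding; as written, that step is missing.
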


To prove this proposition, we need to first consider the space $H^1_b(\R^n\times\R^m)$, containing those functions $f$ such that $fb\in H^1(\R^n\times\R^m)$. It is easy to check that the dual space of $H^1_b(\R^n\times\R^m)$ is $BMO_b(\R^n\times\R^m)$, containing functions $f$ such that $fb^{-1}\in BMO(\R^n\times\R^m)$. It is well known that $H^1$ can be characterized using both martingale maximal function and square function with the norms being equivalent (\cite{CFS}). Similarly, if we define a $b$-adapted maximal function
\begin{displaymath}
f^*_b(x)=\sup_{p,q\in\Z}|E^{b_1}_p E^{b_2}_q f(x)|=\sup_{I\in\mathcal{D}^n,J\in\mathcal{D}^m}|E^{b_1}_I E^{b_2}_J f(x)|,
\end{displaymath}
then, we have the following fact
\begin{prop}
A function $f$ belongs to $H^1_b(\R^n\times\R^m)$ if and only if $f^*_b\in L^1(\R^n\times\R^m)$.
\end{prop}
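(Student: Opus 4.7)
The plan is to reduce the proposition to the classical maximal-function characterization of product dyadic $H^1$ recalled from \cite{CFS}, by showing that the $b$-adapted maximal function $f^*_b$ is pointwise comparable to the standard double dyadic maximal function applied to $fb$.

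First I would unwind the $b$-adapted expectations explicitly. For $x\in I\times J$ with $I\in\mathcal{D}^n_p$ and $J\in\mathcal{D}^m_q$, Fubini together with the tensor structure $b=b_1\otimes b_2$ gives
\begin{displaymath}
E^{b_1}_p E^{b_2}_q f(x)=\frac{\int_{I\times J} fb}{\int_I b_1\cdot\int_J b_2}=\frac{\int_{I\times J} fb}{\int_{I\times J} b},
\end{displaymath}
whereas the ordinary double dyadic expectation of $fb$ satisfies $E_p E_q(fb)(x)=|I\times J|^{-1}\int_{I\times J}fb$.

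Next, I would use pseudo-accretivity and boundedness of the factors to control the denominators: the hypotheses give $C|I|\leq|\int_I b_1|\leq\|b_1\|_\infty|I|$, and similarly for $J$, so
\begin{displaymath}
\Bigl|\int_{I\times J} b\Bigr|=\Bigl|\int_I b_1\Bigr|\,\Bigl|\int_J b_2\Bigr|\approx |I\times J|,
\end{displaymath}
uniformly in $I,J$, with constants depending only on $\|b\|_\infty$ and the pseudo-accretivity constant. Consequently $|E^{b_1}_I E^{b_2}_J f(x)|\approx |E_p E_q(fb)(x)|$ pointwise, and taking the supremum over all $p,q$ yields the pointwise equivalence $f^*_b(x)\approx (fb)^*(x)$, where $(fb)^*$ denotes the standard double dyadic martingale maximal function of $fb$.

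With this equivalence in hand, $\|f^*_b\|_{L^1}\approx\|(fb)^*\|_{L^1}$, and by the maximal-function characterization of product martingale $H^1$ from \cite{CFS}, $\|(fb)^*\|_{L^1}\approx\|fb\|_{H^1}$. Therefore $f^*_b\in L^1$ if and only if $fb\in H^1$, which is precisely the definition of $f\in H^1_b$. The only slightly nontrivial point is the two-sided bound on $|\int_{I\times J} b|$, and this is immediate from the tensor form of $b$ together with the assumptions on $b_1$ and $b_2$; no genuinely bi-parameter difficulty arises, since we reduce the equivalence to known one- and two-parameter statements rather than reprove them.
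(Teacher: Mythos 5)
Your proposal is correct and fills in the details of the reduction the paper invokes but does not write out: since $H^1_b$ is defined by $fb\in H^1$, and since the pointwise identity $E^{b_1}_pE^{b_2}_qf = \bigl(\int_{I\times J}b\bigr)^{-1}\int_{I\times J}fb$ together with pseudo-accretivity and boundedness of $b$ gives $f^*_b\approx (fb)^*$ pointwise with absolute constants, the statement follows immediately from the classical maximal-function characterization of product martingale $H^1$ cited in the paper. This is exactly the route the paper has in mind, just made explicit.
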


Now, define a $b$-adapted square function as
\begin{displaymath}
S_bf(x)=(\sum_{p,q\in\Z} |\Delta^{b_1}_p\Delta^{b_2}_q f(x)|^2)^{1/2}=(\sum_{I\in\mathcal{D}^n,J\in\mathcal{D}^m} |\Delta^{b_1}_I\Delta^{b_2}_J f(x)|^2)^{1/2},
\end{displaymath}
and let the space $K^1_b(\R^n\times\R^m)$ consist of all the functions $f$ such that $S_bf\in L^1(\R^n\times\R^m)$. We then have the following theorem.

\begin{thm}\label{squaremax}
If $f\in K^1_b(\R^n\times\R^m)$, then $f\in H^1_b(\R^n\times\R^m)$. Moreover, for all $f\in K^1_b$, $\|f^*_b\|_{L^1}\lesssim \|S_bf\|_{L^1}$.
\end{thm}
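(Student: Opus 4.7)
My plan is to prove this square function $\Rightarrow$ maximal function inequality by adapting the Chang--Fefferman stopping-time decomposition of product $H^1$ to the bi-parameter $b$-adapted martingale setting. The key preliminary observation is that for any $x\in I\times J$ with $I\in\mathcal{D}^n,J\in\mathcal{D}^m$,
\[
|E^{b_1}_I E^{b_2}_J f(x)|=\frac{|\int_{I\times J}f\,b|}{|\int_I b_1|\,|\int_J b_2|}\lesssim M_s f(x),
\]
where $M_s$ is the classical strong maximal operator and the inequality uses the pseudo-accretivity lower bounds $|\int_I b_1|\gtrsim |I|$, $|\int_J b_2|\gtrsim |J|$ together with $b\in L^\infty$. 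Consequently $f^*_b\lesssim M_s f$ pointwise, so in particular $(\cdot)^*_b$ is bounded on $L^2(\R^n\times\R^m)$.

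Reduce by density to a finite $b$-adapted expansion $f=\sum_{R\in\mathcal{F}}\Delta^{b_1}_I\Delta^{b_2}_J f$. For $k\in\Z$, set $\Omega_k=\{S_bf>2^k\}$, which is open, and form the Journ\'e enlargement $\tilde\Omega_k=\{M_s\chi_{\Omega_k}>1/2\}$, which satisfies $|\tilde\Omega_k|\lesssim|\Omega_k|$ by the weak $(1,1)$ bound for $M_s$. Call a dyadic rectangle $R=I\times J$ of \emph{type} $k$ if $R\subset\tilde\Omega_k$ but $R\not\subset\tilde\Omega_{k+1}$; every $R\in\mathcal{F}$ has a unique type. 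Set $f_k=\sum_{R\text{ of type }k}\Delta^{b_1}_I\Delta^{b_2}_J f$, so that $f=\sum_k f_k$. By property (4) of the $b$-adapted martingale differences combined with the stratification,
\[
\|f_k\|^2_{L^2}\lesssim\sum_{R\text{ of type }k}\|\Delta^{b_1}_I\Delta^{b_2}_J f\|^2_{L^2}\lesssim 2^{2k}|\Omega_k|,
\]
the last estimate being the standard bi-parameter Carleson-type consequence of the type-$k$ stopping. Combining with the $L^2$-bound on $(\cdot)^*_b$ and Cauchy--Schwarz on a further Journ\'e enlargement $\tilde\Omega_k^{**}$ of $\tilde\Omega_k$,
\[
\int_{\tilde\Omega_k^{**}}f^*_{k,b}\leq |\tilde\Omega_k^{**}|^{1/2}\|f^*_{k,b}\|_{L^2}\lesssim 2^k|\Omega_k|,
\]
while the complementary integral $\int_{(\tilde\Omega_k^{**})^c}f^*_{k,b}$ is bounded by the same amount through the tail decay of the $b$-adapted conditional expectations together with the bi-parameter Journ\'e covering lemma.

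A layer-cake summation then finishes the proof:
\[
\|f^*_b\|_{L^1}\leq\sum_k\|f^*_{k,b}\|_{L^1}\lesssim\sum_k 2^k|\Omega_k|\approx\int_0^\infty |\{S_bf>\lambda\}|\,d\lambda=\|S_bf\|_{L^1}.
\]
The main obstacle is the bi-parameter geometric argument: controlling $\|f_k\|^2_{L^2}$ by $2^{2k}|\Omega_k|$ and handling the off-support tail of $f^*_{k,b}$ both require Journ\'e's covering lemma, since in the product setting the union of type-$k$ rectangles need not cluster inside $\tilde\Omega_k$ as cleanly as in the one-parameter case. The $b$-adapted ingredients themselves do not introduce new difficulties: the extra factors $b_i^{\pm1}$ appearing in $E^{b_i}_p$ and $\Delta^{b_i}_p$ are bounded above and below by pseudo-accretivity and are therefore absorbed into bounded multiplicative constants, so the classical Chang--Fefferman argument transfers with only cosmetic modifications.
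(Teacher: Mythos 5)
There is a genuine gap at the key estimate $\|f_k\|^2_{L^2}\lesssim 2^{2k}|\Omega_k|$, which you call a ``standard Carleson-type consequence.'' The Chang--Fefferman stopping argument deduces this from the fact that each quantity summed in the square function is \emph{constant} on the rectangle used in the type condition. Here the obstruction is that $|\Delta^{b_1}_I\Delta^{b_2}_J f|^2$ is not constant on $R=I\times J$, only on the children $I'\times J'$. Your type condition $R\not\subset\tilde\Omega_{k+1}$ gives $|R\cap\Omega_{k+1}|\le|R|/2$, but this does not control $|R'\cap\Omega_{k+1}|$ for individual children $R'$, so the intended step
\[
\int_R |\Delta^{b_1}_I\Delta^{b_2}_J f|^2 \;\leq\; 2\int_{R\cap\Omega_{k+1}^c} |\Delta^{b_1}_I\Delta^{b_2}_J f|^2
\]
is false in general (picture $|\Delta^b_R f|$ concentrated on a single child that lies entirely inside $\Omega_{k+1}$). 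If you try to fix this by stopping at the level of children $R'$ instead, you lose the cancellation $\int_I b_1\,\Delta^{b_1}_I\Delta^{b_2}_J f=\int_J b_2\,\Delta^{b_1}_I\Delta^{b_2}_J f=0$, which is exactly what is needed in the other half of the argument to show that $E^{b_1}_pE^{b_2}_q f_k$ vanishes outside $\tilde\Omega_k$. Relatedly, your description of the off-support piece as controlled by ``tail decay'' is misconceived: $E^{b_1}_pE^{b_2}_q$ is a sharp (non-smooth) conditional expectation and has no tails --- the issue is purely one of support, and the support property holds only if the martingale differences retain their zero-mean structure, which your child-level fix would destroy.

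The paper sidesteps this tension by adopting Bernard's double-martingale framework rather than the Chang--Fefferman rectangle stratification: the stopping time $\tau_n(x)=\{(p,q):\forall t\le(p,q),\,E_t(\chi_{F_n})(x)\le 1/2\}$ is pointwise and $\mathcal{F}_{p,q}$-measurable, the stopped process $f^{\tau_n}(x)=\sum_{t\in\tau_n(x)}\Delta^b_{t-1}f(x)$ is assembled from pieces $\Delta^b_{t-1}f$ that are \emph{constant on the generation-$t$ rectangles where the stopping is evaluated}, and the atom property $a^n_t=E^b_t a^n=0$ for $t+1\in\tau_n$ handles the maximal-function support directly. So the Carleson bound and the support property are achieved simultaneously precisely because the stopping is set one generation ``below'' the martingale difference, something your rectangle-typed decomposition does not do. If you want to push the Chang--Fefferman route through, you would at minimum need to redo the stopping at the scale on which each $\Delta^{b_1}_I\Delta^{b_2}_J f$ is constant and then re-establish the support of the maximal function by a separate argument (e.g.\ a weak-type bound for the part where the cancellation is broken), which is substantially more work than the two hand-waved steps suggest.
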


To prove Theorem \ref{squaremax}, we use the idea of double martingale by Bernard and a technique involving atomic decomposition. See \cite{Be}. 

First, in our $b$-adapted case, the well known equivalence of $L^2$ norm between martingale maximal function and square function is still true. More specifically, we have
\begin{prop}
If function $f\in L^2(\R^n\times\R^m)$, then both $f^*_b$ and $S_bf$ are in $L^2$, and their norms are equivalent to $\|f\|_{L^2}$.
\end{prop}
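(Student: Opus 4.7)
The plan is to prove the two halves of the equivalence separately and independently. For the square function side, the identity $\|S_bf\|_{L^2}\approx\|f\|_{L^2}$ is essentially immediate from property $(4)$ of the bi-parameter $b$-adapted martingale differences stated earlier: Tonelli interchanges the sum in the definition of $S_bf$ with the integral, giving
\begin{displaymath}
\|S_bf\|^2_{L^2(\R^n\times\R^m)}=\sum_{p,q\in\Z}\|\Delta^b_{p,q}f\|^2_{L^2(\R^n\times\R^m)},
\end{displaymath}
and property $(4)$ identifies the right-hand side with a quantity comparable to $\|f\|^2_{L^2}$.

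For the maximal function side, I would treat the two inequalities separately. The lower bound $\|f\|_{L^2}\le\|f^*_b\|_{L^2}$ I plan to obtain from the convergence $E^b_{p,q}f\to f$ in $L^2$ as $p,q\to+\infty$, which holds by the usual Lebesgue-differentiation-type argument once $b$ is bounded and pseudo-accretive; passing to an a.e.-convergent subsequence forces $|f|\le f^*_b$ almost everywhere, so the lower bound is automatic. For the upper bound $\|f^*_b\|_{L^2}\lesssim\|f\|_{L^2}$, the strategy is pointwise domination by the bi-parameter strong maximal function $M_S$. The crucial input is pseudo-accretivity: for $x\in I\in\mathcal{D}^n_p$ one has
\begin{displaymath}
|E^{b_1}_pf(x)|=\frac{|\int_I fb_1|}{|\int_I b_1|}\lesssim\frac{\|b_1\|_\infty}{|I|}\int_I|f|,
\end{displaymath}
since $|\int_I b_1|\gtrsim|I|$; iterating the analogous bound in the $\R^m$ variable then yields $f^*_b(x)\lesssim M_S|f|(x)$, and the classical $L^2$ boundedness of $M_S$ (which follows from iterating the one-parameter Hardy-Littlewood maximal theorem in each variable) closes the estimate.

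No substantial obstacle is anticipated here: once property $(4)$ is in hand, the square function side is just an interchange of sum and integral, and the maximal function side reduces, via pseudo-accretivity, to the $L^2$ theory of the bi-parameter strong maximal function. The one place where a little care is warranted is the $L^2$-convergence $E^b_{p,q}f\to f$ used in the lower bound; this follows from the density of finite sums of $b$-adapted martingale differences together with the fact that the tail expectations vanish as $p,q\to-\infty$, both of which are standard consequences of the boundedness and pseudo-accretivity of $b$.
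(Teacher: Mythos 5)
Your proposal is correct and follows essentially the same route as the paper: the square-function estimate via property $(4)$ and an interchange of sum and integral, and the maximal-function estimate via the lower bound $|f|\le f^*_b$ a.e.\ combined with the pseudo-accretivity-driven pointwise domination $f^*_b\lesssim M^S f$ and the $L^2$ boundedness of the strong maximal function. The only difference is that you spell out the a.e.-convergence argument behind the lower bound a bit more carefully than the paper, which simply asserts $|f|\le f^*_b$ a.e.
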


\begin{proof}
Iteration of a well known one-parameter $L^2$ result (see \cite{NTV}) gives 
\begin{displaymath}
\|f\|^2_{L^2}\approx \sum_{p,q}\|\Delta^{b_1}_p\Delta^{b_2}_qf\|^2_{L^2}.
\end{displaymath}
Hence,
\begin{displaymath}
\|S_bf\|^2_{L^2}=\int |S_bf|^2=\sum_{p,q}\int |\Delta^{b_1}_p\Delta^{b_2}_qf|^2=\sum_{p,q}\|\Delta^{b_1}_p\Delta^{b_2}_qf\|^2_{L^2}\approx\|f\|^2_{L^2}.
\end{displaymath}
For martingale maximal function, $f\leq f^*_b\,a.e.$ gives $\|f\|_{L^2}\leq\|f^*_b\|_{L^2}$. On the other hand, by accretivity
\begin{displaymath}
f^*_b=\sup_{I,J}|E^{b_1}_IE^{b_2}_Jf|\lesssim \sup_{I,J}\frac{\int_{I\times J}|f|}{|I\times J|}\leq M^Sf,
\end{displaymath}
and the strong maximal function is bounded on $L^2$, it implies $\|f^*_b\|_{L^2}\lesssim\|f\|_{L^2}$.
\end{proof}

For simplicity, denote $f_{p,q}=E^{b_1}_pE^{b_2}_q f$, and for each pair $(p,q)\in\Z\times\Z$, let $\mathcal{F}_{p,q}$ be the $\sigma$-algebra generated by all the dyadic rectangles of size $2^{-p}\times 2^{-q}$.
\begin{defn}
The function $x\mapsto\tau(x)\subset\Z\times\Z$ is called a stopping time if $\{x:\,(p,q)\in\tau(x)\}$ is $\mathcal{F}_{p,q}$-measurable.
\end{defn}

\begin{defn}
$a\in L^2(\R^n\times\R^m)$ is \emph{an atom} of $K^1_b$ if there exists a stopping time $\tau$ such that
\begin{enumerate}
\item $|\{x:\,\tau(x)\neq \Z\times\Z\}|<\infty$;
\item Let $a_t=E^b_ta,\,t\in\Z\times\Z$, then $a_t(x)=0,\,\forall t+1\in\tau(x)$;
\item $\|a^*_b\|_{L^2}\lesssim |\{x:\,\tau(x)\neq \Z\times\Z\}|^{-1/2}$.
\end{enumerate}
\end{defn}

Note that if we call $F=\{x:\,\tau(x)\neq \Z\times\Z\}$, then from property $(2)$ in the definition, both $a^*_b$ and $S_ba$ are supported on $F$. Also, such functions are called atoms because they have the following property.

\begin{prop}
If $a$ is an atom, then $a\in C\cdot\mathrm{B}$, where $\mathrm{B}$ is the unit ball in $H^1_b$ or $K^1_b$, and $C$ is a universal constant independent of $a$.
\end{prop}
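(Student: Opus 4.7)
The plan is to establish both $\|a\|_{H^1_b}\lesssim 1$ and $\|a\|_{K^1_b}\lesssim 1$, that is $\|a^*_b\|_{L^1}\lesssim 1$ and $\|S_ba\|_{L^1}\lesssim 1$, by a single Cauchy--Schwarz trick applied on the finite-measure stopping set. Set $F:=\{x:\tau(x)\neq \Z\times\Z\}$, which has finite measure by property~(1) of an atom.

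The first step is to upgrade the defining properties to the support statement already recorded in the remark preceding the proposition: both $a^*_b$ and $S_ba$ vanish off $F$. For $x\notin F$ one has $\tau(x)=\Z\times\Z$, so every $t+1$ lies in $\tau(x)$, and property~(2) forces $a_t(x)=0$ for every $t\in\Z\times\Z$. This kills the supremum defining $a^*_b(x)$, and since every double increment $\Delta^{b_1}_p\Delta^{b_2}_qa(x)$ is built from the values $a_t(x)$, it also kills $S_ba(x)$.

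Once the support is in hand, Cauchy--Schwarz together with property~(3) gives
$$\|a^*_b\|_{L^1}=\int_F a^*_b\,dx\le |F|^{1/2}\|a^*_b\|_{L^2}\lesssim |F|^{1/2}\cdot|F|^{-1/2}=1,$$
so $a$ lies in a bounded multiple of the unit ball of $H^1_b$. For the square-function bound, combining the same Cauchy--Schwarz step with the $L^2$ equivalence $\|S_bf\|_{L^2}\approx\|f\|_{L^2}$ proved in the previous proposition and the pointwise inequality $|f|\le f^*_b$ yields
$$\|S_ba\|_{L^1}\le |F|^{1/2}\|S_ba\|_{L^2}\lesssim |F|^{1/2}\|a\|_{L^2}\le |F|^{1/2}\|a^*_b\|_{L^2}\lesssim 1.$$

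The argument is genuinely short; the only point where care is required is the verification that property~(2), phrased in bi-parameter stopping-time language, really does annihilate every martingale level of $a$ at points outside $F$. This is the bi-parameter counterpart of Bernard's double-martingale observation, and is already recorded in the note preceding the statement. So nothing beyond the $L^2$ boundedness of $S_b$ and $f\mapsto f^*_b$ just established is needed, and the constant $C$ is independent of the particular atom $a$ as required.
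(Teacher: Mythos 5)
Your proof is correct and follows essentially the same route as the paper's: both exploit the support of $a^*_b$ and $S_ba$ in the finite-measure set $F$, then apply Cauchy--Schwarz together with property~(3) and the $L^2$-equivalence of the maximal and square functions. You supply a bit more detail on why property~(2) forces $a^*_b$ and $S_ba$ to vanish off $F$, which the paper merely records in the remark preceding the proposition; otherwise the two arguments coincide.
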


\begin{proof}
Using the supports of $a^*_b$ and $S_ba$, H\"{o}lder inequality implies
\begin{displaymath}
\|a^*_b\|_{L^1}=\|a^*_b\chi_{F}\|_{L^1}\leq\|a^*_b\|_{L^2} |F|^{1/2}\lesssim 1.
\end{displaymath}
and
\begin{displaymath}
\|S_ba\|_{L^1}=\|S_ba\chi_F\|_{L^1}\leq\|S_ba\|_{L^2} |F|^{1/2}\approx \|a^*_b\|_{L^2} |F|^{1/2}\lesssim 1.
\end{displaymath}
\end{proof}

We now state the theorem of atomic decomposition.
\begin{thm}\label{atom}
Given $f\in K^1_b\cap L^2$, there exists a sequence of atoms $a^n$ and a sequence of scalars $\lambda_n$ such that
\begin{enumerate}
\item $f=\sum_n \lambda_n a^n,\,a.e.$
\item $\sum_n|\lambda_n|\lesssim \|f\|_{K^1_b}$.
\end{enumerate}
\end{thm}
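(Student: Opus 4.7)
The plan is to adapt the classical martingale atomic decomposition of Davis--Garsia to our bi-parameter $b$-adapted setting, following the double-martingale strategy of Bernard cited above the statement. The level sets of the square function $S_bf$ will drive the stopping, and each atom will be a successive stopped difference of $f$. First, for each $k\in\Z$ put $F_k:=\{x:\,S_bf(x)>2^k\}$. Since $f\in K^1_b$, the layer cake identity yields $\sum_k 2^k|F_k|\approx\|S_bf\|_{L^1}=\|f\|_{K^1_b}$, and this bound will be used verbatim to control the $\ell^1$ sum of the coefficients at the end.

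Next, for each $k$ I would construct a bi-parameter stopping time $\tau_k:x\mapsto\tau_k(x)\subset\Z\times\Z$ in the sense of the definition preceding the statement, enjoying (a) the monotonicity $\tau_k\subset\tau_{k+1}$, (b) the containment $\{x:\tau_k(x)\ne\Z\times\Z\}\subset\widetilde{F}_k$ with $|\widetilde{F}_k|\lesssim|F_k|$, where $\widetilde{F}_k$ is a strong-maximal-function enlargement of $F_k$, and (c) the pointwise control $S_b(f-f^{\tau_k})\lesssim 2^k$ on $\widetilde{F}_k^{\,c}$, where $f^{\tau_k}$ is the double-martingale of $f$ stopped at $\tau_k$. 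The definition will be set-valued: a pair $(p,q)$ belongs to $\tau_k(x)$ precisely when every dyadic rectangle $I\times J\ni x$ of size $2^{-p}\times 2^{-q}$ has already been "frozen'' by $F_k$ in the sense $E^b_{p,q}(\chi_{F_k})(x)>1/2$, which renders $\{x:(p,q)\in\tau_k(x)\}$ manifestly $\mathcal{F}_{p,q}$-measurable.

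With $\tau_k$ in hand, set $\lambda_k:=C\,2^k|F_k|$ and $a^k:=\lambda_k^{-1}(f^{\tau_{k+1}}-f^{\tau_k})$ for an absolute constant $C$ to be fixed. Property (1) of an atom is immediate because $\{\tau_{k+1}\ne\Z\times\Z\}\subset\widetilde{F}_{k+1}$ has finite measure; property (2) is built into the stopped double martingale by construction; and property (3) follows from the equivalence $\|(a^k)^*_b\|_{L^2}\approx\|S_ba^k\|_{L^2}$ proved in the Proposition just above, combined with the pointwise bound from step (c), which gives $\|S_ba^k\|_{L^2}\lesssim 2^k|\widetilde{F}_k|^{1/2}/\lambda_k\lesssim|F_k|^{-1/2}$ after $C$ is chosen large. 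The telescoping identity $\sum_k(f^{\tau_{k+1}}-f^{\tau_k})=f$ holds almost everywhere since $f\in L^2$ forces $f^{\tau_k}\to f$ a.e.\ as $k\to+\infty$ and $f^{\tau_k}\to 0$ a.e.\ as $k\to-\infty$, and the coefficient sum obeys $\sum_k|\lambda_k|\lesssim\sum_k 2^k|F_k|\lesssim\|f\|_{K^1_b}$ by the layer cake of the first step.

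The main obstacle is the bi-parameter stopping-time step. In one parameter one defines $\tau_k(x)$ as the first index at which a partial square function exceeds $2^k$, and monotonicity of the stopped square function makes (c) automatic. In two parameters $\Z\times\Z$ carries no natural total order, so one is forced into Bernard's set-valued prescription, and the crucial pointwise control $S_b(f-f^{\tau_k})\lesssim 2^k$ must be extracted from a two-variable analogue of the parallelogram identity $S_b^2(f-f^{\tau_k})=S_b^2 f - S_b^2 f^{\tau_k}$ together with a careful case analysis on which of the two indices has been frozen at a given rectangle. Once this technical step is verified, the remaining estimates are routine consequences of the $L^2$ equivalence between $\|\cdot\|_{L^2}$, $\|S_b\cdot\|_{L^2}$ and $\|\,\cdot^*_b\|_{L^2}$ established in the preceding proposition and of the boundedness of the strong maximal function on $L^2$.
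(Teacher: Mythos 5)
Your overall strategy is indeed the paper's: Bernard-style double-martingale stopping times driven by the level sets $F_k=\{S_bf>2^k\}$, stopped martingales $f^{\tau_k}$, atoms $a^k$ proportional to $(f^{\tau_{k+1}}-f^{\tau_k})$ with normalizers $\lambda_k\approx 2^k|F_k|$ (comparable to the paper's $2^k|\{\tau_k\neq\Z\times\Z\}|$), telescoping for property (1), and Chebyshev plus the $L^2$ boundedness of the dyadic maximal function for the $\ell^1$ bound on the coefficients. However, two points need repair.

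First, your stopping-time condition is reversed and uses the wrong expectation. You put $(p,q)\in\tau_k(x)$ when ``$E^b_{p,q}(\chi_{F_k})(x)>1/2$''; but since $f^{\tau_k}(x)=\sum_{t\in\tau_k(x)}\Delta^b_{t-1}f(x)$ and you need $\tau_k(x)=\Z\times\Z$ to force $f^{\tau_k}=f$, the set $\tau_k(x)$ must consist of the indices where the martingale has \emph{not} yet been stopped. The correct prescription (as in the paper) is $\tau_k(x)=\{(p,q):\,\forall t\leq(p,q),\ E_t(\chi_{F_k})(x)\leq 1/2\}$; the universal quantifier $\forall t\leq(p,q)$ makes $\tau_k(x)$ a down-set (hence $\tau_k\subset\tau_{k+1}$), and $E_t$ is the \emph{classical} expectation, not $E^b_{p,q}$, which since $b$ may be complex need not even produce a real number in $[0,1]$ to compare with $1/2$.

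Second, the step you correctly flag as the main obstacle---property (3) of the atoms, i.e. $\|S_ba^k\|_{L^2}^2\lesssim|\{\tau_k\neq\Z\times\Z\}|^{-1}$---is not carried out, and the route you sketch (a pointwise control $S_b(f-f^{\tau_k})\lesssim 2^k$ off an enlarged $\widetilde F_k$, extracted from a bi-parameter parallelogram identity $S_b^2(f-f^{\tau_k})=S_b^2f-S_b^2f^{\tau_k}$) is not what the paper does and is more delicate than the problem requires: the set-valued, point-dependent stopping time does not yield a clean pointwise relation between $\Delta^b_t f^{\tau_k}$ and $\chi_{\{t+1\in\tau_k\}}\Delta^b_t f$, and what is actually needed is a global $L^2$ bound on $S_b(f^{\tau_{k+1}}-f^{\tau_k})$, not a pointwise one off a small exceptional set. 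The paper bypasses all of this by writing the required quantity as $\int\sum_{t\in\tau_{n+1}\setminus\tau_n}|\Delta^b_{t-1}f|^2$ and splitting the integral over $\{S_bf\leq 2^{n+1}\}$ and $\{S_bf>2^{n+1}\}$. The first piece is at most $4^{n+1}$ times the support measure; on the second, since $t\in\tau_{n+1}$ gives $E_t(\chi_{F_{n+1}})\leq 1/2$ on the generation-$t$ rectangle and $\Delta^b_{t-1}f$ is constant there, one gains a factor $1/2$ and absorbs the piece into the left-hand side. You should replace the unverified parallelogram route with this direct $L^2$ computation.
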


Before stating the proof of Theorem \ref{atom}, we show that this atomic decomposition result will imply Theorem \ref{squaremax}.

\begin{proof}(of Theorem \ref{squaremax})
It suffices to show the result holds for $f\in L^2$. For any such function, atomic decomposition implies 
\begin{displaymath}
f_t=\sum_n\lambda_n a^n_t,\,a.e.\,\forall t\in\Z\times\Z.
\end{displaymath}
Then,
\begin{displaymath}
f^*_b=\sup_t|f_t|=\sup_t|\sum_n\lambda_n a^n_t|\leq\sum_n |\lambda_n|\sup_t|a^n_t|=\sum_n|\lambda_n|(a^n)^*_b,
\end{displaymath}
which implies
\begin{displaymath}
\|f^*_b\|_{L^1}\leq\sum_n|\lambda_n|\sup_n\|(a^n)^*_b\|_{L^1}\lesssim\sum_n|\lambda_n|\lesssim\|S_bf\|_{L^1}.
\end{displaymath}
\end{proof}

We turn to the prove of Theorem \ref{atom}.

\begin{proof}(of Theorem \ref{atom})
For any $n\in\Z$, let $F_n=\{x:\, S_bf(x)>2^n\}$, and
\begin{displaymath}
\tau_n(x)=\{(p,q):\,\forall t\leq (p,q),E_t(\chi_{F_n})\leq 1/2\},
\end{displaymath}
where $E_t$ is the classical expectation operator. It is easy to check that $\tau_n$ is a stopping time, and $\tau_n\subset\tau_{n+1}$.

For each $n$, define a new function $f^{\tau_n}(x)=\sum_{t\in\tau_n(x)}\Delta^b_{t-1}f(x)$, then
\begin{displaymath}
f^{\tau_{n+1}}-f^{\tau_n}=\sum_{t\in\tau_{n+1}\setminus\tau_n}\Delta^b_{t-1}f.
\end{displaymath}
Using this, define
\begin{displaymath}
a^n=\frac{f^{\tau_{n+1}}-f^{\tau_n}}{2^n |\{x:\,\tau_n\neq\Z\times\Z\}|},\qquad \lambda_n=2^n|\{x:\,\tau_n\neq\Z\times\Z\}|. 
\end{displaymath}
We claim that such $a^n$ and $\lambda_n$ satisfy all the properties required in the theorem.

To check property $(2)$:
\begin{displaymath}
\begin{split}
\sum_n|\lambda_n|&=\sum_n2^n|\{x:\,\tau_n\neq\Z\times\Z\}|=\sum_n2^n|\{x:\,\sup_t|E_t(\chi_{F_n})|>1/2\}|\\
&\leq\sum_n2^n\cdot 4\int(\sup_t|E_t(\chi_{F_n})|)^2\\
&=4\sum_n2^n\|(\chi_{F_n})^*\|^2_{L^2}\\
&\lesssim 4\sum_n2^n\|\chi_{F_n}\|^2_{L^2}\\
&=4\sum_n2^n|\{x:\,S_bf(x)>2^n\}|\lesssim\|S_bf\|_{L^1}.
\end{split}
\end{displaymath}
In the above, the second line follows from Chebyshev Inequality, and the fourth line uses the $L^2$ boundedness of the classical martingale maximal function.

To check property $(1)$:
It suffices to check that
\begin{displaymath}
f=\sum_n(f^{\tau_{n+1}}-f^{\tau_n})=\lim_{n\rightarrow \infty}f^{\tau_n}-\lim_{n\rightarrow-\infty}f^{\tau_n},\,a.e.
\end{displaymath}
For the first limit, Chebyshev Inequality implies that 
\begin{displaymath}
|F_n|\leq2^{-n}\int S_bf=2^{-n}\|S_bf\|_{L^1}.
\end{displaymath}
So as $n\rightarrow\infty$, $|F_n|\rightarrow 0$ monotonically. Hence, $\|\sup_t|E_t(\chi_{F_n})|\|^2_{L^2}\approx\|\chi_{F_n}\|^2_{L^2}\rightarrow 0$.
By Fatou's Lemma,
\begin{displaymath}
\int \liminf_{n\rightarrow\infty} |(\chi_{F_n})^*|^2\leq\liminf_{n\rightarrow\infty}\int|(\chi_{F_n})^*|^2=0,
\end{displaymath}
which implies $\lim_{n\rightarrow\infty}|E_t(\chi_{F_n})|=0\,a.e.$ uniformly in $t$. So when $n$ is large enough, $\tau_n=\Z\times\Z\,a.e.$, i.e. $f^{\tau_n}=f$.

For the second limit, if $x$ is such that $S_bf(x)=0$, then $\Delta^b_tf(x)=0,\,\forall t$. Hence, $f^{\tau_n}(x)=0\,\,\forall n$.
Also, in this case, $\forall q$ fixed, since $f\in L^2(\R^n\times\R^m)$, 
\begin{displaymath}
\lim_{p\rightarrow-\infty}|E^{b_1}_pE^{b_2}_qf(x)|\lesssim\lim_{|I|\rightarrow\infty}\frac{\int_{I\times J}|f|}{|I\times J|}\leq\lim_{|I|\rightarrow\infty}\frac{\|f\|_{L^2}}{|I\times J|^{1/2}}=0.
\end{displaymath}
And similarly for $E^{b_1}_pE^{b_2}_{q+1}f(x)$. So $E^{b_1}_p\Delta^{b_2}_qf(x)=E^{b_1}_{p-1}\Delta^{b_2}_qf(x)=\lim_{p\rightarrow-\infty}E^{b_1}_p\Delta^{b_2}_qf(x)=0$, which means
\begin{displaymath}
E^{b_1}_pE^{b_2}_qf(x)=E^{b_1}_pE^{b_2}_{q+1}f(x),\,\forall p,q.
\end{displaymath} 
A similar limiting argument for the other variable implies $E^{b_1}_pE^{b_2}_qf(x)=0\,\forall p,q$. Hence, 
\begin{displaymath}
f(x)=\lim_{p,q\rightarrow\infty}f_{p,q}(x)=0.
\end{displaymath} 
Then the convergence is automatically true.

If $x$ is such that $S_bf(x)>0$, then for small enough $n$, $S_bf(x)>2^n$, i.e. $x\in F_n\subset X=\bigcup_n F_n=\{x:\,S_bf(x)>0\}$. Also,
\begin{displaymath}
\lim_{n\rightarrow-\infty}f^{\tau_n}(x)=\lim_{n\rightarrow-\infty}\sum_{t\in\tau_n}\Delta^b_{t-1}f(x)=\sum_{t\in\bigcap\tau_n}\Delta^b_{t-1}f(x).
\end{displaymath}
We claim that all the terms appearing in the sum are $0$, hence $\lim_{n\rightarrow-\infty}f^{\tau_n}(x)=0$.
 
For any $t\in\bigcap\tau_n(x)$, we have $E_t(\chi_{F_n})(x)\leq 1/2,\,\forall n$. Let $n\rightarrow-\infty$,
\begin{displaymath}
E_t(\chi_X)(x)=\lim_{n\rightarrow-\infty}E_t(\chi_{F_n})(x)\leq 1/2.
\end{displaymath}
Say $R=I\times J$ of size $2^{-t_1}\times 2^{-t_2}$ is the rectangle containing $x$ of generation $t$. Then $R$ has nonempty intersection with $X^c$ since otherwise $E_t(\chi_X)(x)=1$. For any $y\in R\cap X^c$, since $S_bf(y)=0$,
we have $\Delta^b_{p,q}f(y)=0,\,\forall p,q$. 

However, since $\Delta^b_{t-1}f(x)=\Delta^b_{t-1}f(y)$, it implies $\Delta^b_{t-1}f(x)=0$, which proves the claim. 

Then the only thing left to check is that all the $a^n$ defined are indeed atoms. 

To see this, firstly, $a^n\in L^2$. Indeed,
\begin{displaymath}
\|f^{\tau_{n+1}}-f^{\tau_n}\|^2_{L^2}\approx\int |S_b(f^{\tau_{n+1}}-f^{\tau_n})|^2=\int\sum_{t\in\tau_{n+1}\setminus\tau_n}|\Delta^b_{t-1}f|^2\leq\|S_bf\|^2_{L^2}\approx\|f\|^2_{L^2}.
\end{displaymath}

Secondly, just as how we argued for the second property above, we see that $|\{x:\,\tau_n(x)\neq\Z\times\Z\}|<\infty$.

Thirdly, if $t+1\in\tau_n$, for any double integer $s$ not satisfying $s\leq t$, by a simple computation, we have
\begin{displaymath}
E^b_t\Delta^b_{s-1}f=E^b_t(E^{b_1}_{s_1}E^{b_2}_{s_2}-E^{b_1}_{s_1-1}E^{b_2}_{s_2}-E^{b_1}_{s_1}E^{b_2}_{s_2-1}+E^b_{s-1})f=0.
\end{displaymath}
On the other hand, if $s\leq t$, then $s\in\tau_n$, hence,
\begin{displaymath}
E^b_t(\sum_{s\leq t,s\in\tau_{n+1}\setminus\tau_n}\Delta^b_{s-1}f)=0,
\end{displaymath}
which implies $a^n_t=E^b_t(a^n)=0$.

Finally, to show $\|(a^n)^*_b\|^2_{L^2}\lesssim |\{x:\,\tau_n(x)\neq\Z\times\Z\}|^{-1}$, it suffices to show
\begin{displaymath}
\|S_ba^n\|^2_{L^2}\lesssim |\{x:\,\tau_n(x)\neq\Z\times\Z\}|^{-1},
\end{displaymath}
which is equivalent to
\begin{displaymath}
\int\sum_{t\in\tau_{n+1}\setminus\tau_n}|\Delta^b_{t-1}f|^2\lesssim 4^n|\{x:\,\tau_n(x)\neq\Z\times\Z\}|.
\end{displaymath}
Write
\begin{displaymath}
LHS=\int\sum_{t\in\tau_{n+1}\setminus\tau_n}|\Delta^b_{t-1}f|^2\chi_{\{S_bf\leq 2^{n+1}\}}+\int\sum_{t\in\tau_{n+1}\setminus\tau_n}|\Delta^b_{t-1}f|^2\chi_{\{S_bf> 2^{n+1}\}}=I+II.
\end{displaymath} 
The first term can be dealt with trivially,
\begin{displaymath}
I\leq 4^{n+1}\int_{\mbox{spt}(S_ba^n)}\chi_{\{S_bf\leq 2^{n+1}\}}\leq 4^{n+1}|\{x:\,\tau_n(x)\neq\Z\times\Z\}|.
\end{displaymath}
For the second term, let $\mathcal{D}_t$ denote all those dyadic rectangles of generation $t$, then
\begin{displaymath}
\begin{split}
II&=\sum_{t\in\tau_{n+1}\setminus\tau_n}\sum_{R\in\mathcal{D}_t}\int_R |\Delta^b_{t-1}f|^2\chi_{\{S_bf>2^{n+1}\}}\\
&=\sum_{t\in\tau_{n+1}\setminus\tau_n}\sum_{R\in\mathcal{D}_t}|\Delta^b_{t-1}f|^2\mid_R |R|E_t(\chi_{F_{n+1}})\chi_R\\
&\leq \frac{1}{2}\sum_{t\in\tau_{n+1}\setminus\tau_n}\sum_{R\in\mathcal{D}_t}\int_R |\Delta^b_{t-1}f|^2\\
&=\frac{1}{2}\int\sum_{t\in\tau_{n+1}\setminus\tau_n}|\Delta^b_{t-1}f|^2
\end{split}
\end{displaymath}
In the above, the second lines follows from the fact that $\Delta^b_{t-1}f$ is a constant on each $R$, and the third line uses $t\in\tau_{n+1}$.
Combining $I$ and $II$ gives us
\begin{displaymath}
\int\sum_{t\in\tau_{n+1}\setminus\tau_n}|\Delta^b_{t-1}f|^2\lesssim 2\cdot 4^{n+1}|\{x:\,\tau_n(x)\neq\Z\times\Z\}|,
\end{displaymath}
which completes our proof for the theorem of atomic decomposition.
\end{proof}

With the result of Theorem \ref{squaremax}, we return to the full paraproducts, and give a proof of Proposition \ref{fullpara}.

\begin{proof}(of Proposition \ref{fullpara})
For any $f, g\in L^2(\R^n\times\R^m)$,
\begin{displaymath}
\begin{split}
|\pair{\pi^{b',b}_a(f)}{g}|&=|\pair{\sum_{K,V}\ave{f}^{b'}_{K\times V}M_b\Delta^{b_1}_K\Delta^{b_2}_Va}{g}|\\
&=|\pair{a}{M_b\sum_{K,V}\ave{f}^{b'}_{K\times V}\Delta^{b_1}_K\Delta^{b_2}_Vg}|\\
&\leq \|a\|_{BMO}\|M_b\sum_{K,V}\ave{f}^{b'}_{K\times V}\Delta^{b_1}_K\Delta^{b_2}_Vg\|_{H^1}=\|a\|_{BMO}\|\sum_{K,V}\ave{f}^{b'}_{K\times V}\Delta^{b_1}_K\Delta^{b_2}_Vg\|_{H^1_b}\\
&\lesssim \|a\|_{BMO}\|S_b(\sum_{K,V}\ave{f}^{b'}_{K\times V}\Delta^{b_1}_K\Delta^{b_2}_Vg)\|_{L^1},
\end{split}
\end{displaymath}
where the last step in the above follows from Theorem \ref{squaremax}. Hence, it suffices to show that
\begin{displaymath}
\|S_b(\sum_{K,V}\ave{f}^{b'}_{K\times V}\Delta^{b_1}_K\Delta^{b_2}_Vg)\|_{L^1}\lesssim\|f\|_{L^2}\|g\|_{L^2}.
\end{displaymath}

To see this, notice that
\begin{displaymath}
\begin{split}
S_b^2(\sum_{K,V}\ave{f}^{b'}_{K\times V}\Delta^{b_1}_K\Delta^{b_2}_Vg)&=\sum_{K,V}|\ave{f}^{b'}_{K\times V}\Delta^{b_1}_K\Delta^{b_2}_Vg|^2\chi_{K}\otimes \chi_{V}\\
&\leq |\sup_{K,V}\ave{f}^{b'}_{K\times V}\chi_K\otimes\chi_V|^2\sum_{K,V}|\Delta^{b_1}_K\Delta^{b_2}_Vg|^2\\
&\lesssim |M^S(f)|^2S_b^2(g),
\end{split}
\end{displaymath}
where $M^S(f)$ is the strong maximal function, which is bounded on $L^2$. Since $S_b$ is also bounded on $L^2$, we have
\begin{displaymath}
\begin{split}
&\|S_b(\sum_{K,V}\ave{f}^{b'}_{K\times V}\Delta^{b_1}_K\Delta^{b_2}_Vg)\|_{L^1}\\
&\lesssim \|M^S(f)S_b(g)\|_{L^1}\leq \|M^S(f)\|_{L^2}\|S_b(g)\|_{L^2}\lesssim \|f\|_{L^2}\|g\|_{L^2}.
\end{split}
\end{displaymath}
\end{proof}

\subsection*{Mixed paraproducts}
Since we are working in the bi-parameter setting, there appears a new mixed type of $b$-adapted paraproducts which requires particular attention. Basically, it means we have an average on $a$, and a difference on $f$ with respect to one variable, and conversely with respect to the other. 

\begin{defn}
For $a\in BMO(\R^n\times\R^m)$, operator $\tilde{\pi}^{b',b}_a$ is called a mixed paraproduct, defined as
\begin{displaymath}
\tilde{\pi}^{b',b}_a(f)=\sum_{K\in\mathcal{D}^n,V\in\mathcal{D}^m}E^{b'_1*}_K((E^{b'_2}_Vf)M_b\Delta^{b_1}_K\Delta^{b_2}_Va).
\end{displaymath}
\end{defn}

\begin{prop}
Mixed paraproducts are bounded operators on $L^2(\R^n\times\R^m)$. Specifically,
\begin{displaymath}
\|\tilde{\pi}^{b',b}_a(f)\|_{L^2(\R^n\times\R^m)}\lesssim\|a\|_{BMO(\R^n\times\R^m)}\|f\|_{L^2(\R^n\times\R^m)}.
\end{displaymath}
\end{prop}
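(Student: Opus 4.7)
The plan is to follow the $H^1_b$-$BMO$ duality approach used for the full paraproduct (Proposition \ref{fullpara}), but with a carefully chosen pointwise bound to handle the asymmetric mixed structure. By $L^2$ duality it suffices to estimate $|\pair{\tilde{\pi}^{b',b}_a f}{g}|$ for $f,g\in L^2$. Moving the adjoint $E^{b'_1*}_K$ to the test side yields
\[
\pair{\tilde{\pi}^{b',b}_a f}{g}=\sum_{K,V}\pair{M_b\Delta^{b_1}_K\Delta^{b_2}_V a}{(E^{b'_2}_V f)(E^{b'_1}_K g)}.
\]
Using $M_b\Delta^b_{K,V}=\Delta^{b*}_{K,V}M_b$ and taking adjoints, this rewrites as $\pair{a}{M_b h}$, where $h:=\sum_{K,V}\Delta^b_{K,V}[(E^{b'_2}_V f)(E^{b'_1}_K g)]$. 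The $BMO$-$H^1$ pairing gives $|\pair{a}{M_b h}|\leq\|a\|_{BMO}\|h\|_{H^1_b}$, and by Theorem \ref{squaremax} the problem reduces to showing $\|S_b h\|_{L^1}\lesssim\|f\|_{L^2}\|g\|_{L^2}$.

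By the orthogonality property $(3)$ of the $b$-adapted martingale differences, $S_b h^2=\sum_{K,V}|\Delta^b_{K,V}[(E^{b'_2}_V f)(E^{b'_1}_K g)]|^2$. On $K\times V$ the product $(E^{b'_2}_V f)(E^{b'_1}_K g)$ is the tensor product of a function of $x_1$ and a function of $x_2$, so $\Delta^b_{K,V}$ separates into the two one-variable differences. Since the $b_i$-difference in one coordinate commutes with the $b'_j$-average in the other ($i\neq j$), one obtains
\[
\Delta^b_{K,V}\bigl[(E^{b'_2}_Vf)(E^{b'_1}_Kg)\bigr]=\bigl(E^{b'_2}_V\Delta^{b_1}_Kf\bigr)\bigl(E^{b'_1}_K\Delta^{b_2}_Vg\bigr).
\]

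The crucial step is to bound this square function in $L^1$. I would dominate each inner expectation by the corresponding one-variable martingale maximal function, $|E^{b'_2}_V u|\leq M^{b'_2}_{x_2}u$ and $|E^{b'_1}_K u|\leq M^{b'_1}_{x_1}u$, both estimates being immediate from $b'_1,b'_2\in L^\infty$ and pseudo-accretivity. The key observation is that $(M^{b'_2}_{x_2}\Delta^{b_1}_K f)^2\chi_K$ depends only on $K$ while $(M^{b'_1}_{x_1}\Delta^{b_2}_V g)^2\chi_V$ depends only on $V$, so the double sum factors as a product:
\[
S_bh^2\leq\left(\sum_K(M^{b'_2}_{x_2}\Delta^{b_1}_Kf)^2\chi_K\right)\left(\sum_V(M^{b'_1}_{x_1}\Delta^{b_2}_Vg)^2\chi_V\right).
\]
Cauchy-Schwarz in $L^1$ then reduces $\|S_b h\|_{L^1}$ to the product of the $L^2$-norms of the square roots of these two factors. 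For the $K$-factor, the one-variable $L^2$-boundedness of the martingale maximal function (for each fixed $x_1$, integrated by Fubini) and the one-parameter $L^2$-equivalence of the $b_1$-adapted square function give
\[
\Bigl\|\Bigl(\sum_K(M^{b'_2}_{x_2}\Delta^{b_1}_Kf)^2\chi_K\Bigr)^{1/2}\Bigr\|_{L^2}^2\lesssim\sum_K\|\Delta^{b_1}_Kf\|_{L^2}^2\lesssim\|f\|_{L^2}^2,
\]
and the $V$-factor is handled analogously by $\|g\|_{L^2}^2$.

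The main obstacle is precisely this factorization. A direct Cauchy-Schwarz in $(K,V)$, or pulling out a strong maximal function of $f$ or $g$, leaves residual sums over all scales that diverge because neither the $K$- nor the $V$-sum alone carries enough orthogonality. Applying the one-variable martingale maximal function \emph{separately} in each coordinate is what decouples the $f$- and $g$-dependence and reduces the mixed bi-parameter problem to two independent one-parameter square-function estimates.
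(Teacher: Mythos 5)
Your proof is correct and follows essentially the same route as the paper: $H^1_b$--$BMO$ duality via Theorem \ref{squaremax}, the same algebraic reduction of $S_b h^2$ to $\sum_{K,V}|(E^{b'_2}_V\Delta^{b_1}_K f)(E^{b'_1}_K\Delta^{b_2}_V g)|^2$, and then a product of two mixed ``square-of-maximal'' operators bounded via the one-variable maximal inequality in each factor. The only difference is cosmetic: you dominate both conditional expectations by the corresponding one-variable maximal functions directly and then factor the double sum, whereas the paper first writes the bound as $(\sum_K\sup_V)(\sup_K\sum_V)$ and then interchanges $\sup_K$ and $\sum_V$; your Fubini-based $L^2$ estimate also sidesteps the paper's (unnecessary in $L^2$) appeal to Fefferman--Stein, which is a mild streamlining rather than a different argument.
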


Since we already have the $b$-adapted square function characterization of $H^1_b$, this proposition can be proved in the same way as a similar result in \cite{PV}.

\begin{proof}
For any $f,g\in L^2(\R^n\times\R^m)$,
\begin{displaymath}
\begin{split}
|\pair{\tilde{\pi}^{b',b}_a(f)}{g}|&=|\sum_{K,V}\pair{E^{b'_1*}_K(E^{b'_2}_Vf)M_b\Delta^{b_1}_K\Delta^{b_2}_Va}{g}|\\
&=|\sum_{K,V}\pair{a}{M_b(\Delta^{b_1}_KE^{b'_2}_Vf)(\Delta^{b_2}_VE^{b'_1}_Kg)}|\\
&=|\pair{a}{\sum_{K,V}M_b\ave{\Delta^{b_1}_Kf}^{b'_2}_V\otimes\ave{\Delta^{b_2}_Vg}^{b'_1}_K}|\\
&\leq \|a\|_{BMO}\|\sum_{K,V}M_b\ave{\Delta^{b_1}_Kf}^{b'_2}_V\otimes\ave{\Delta^{b_2}_Vg}^{b'_1}_K\|_{H^1}\\
&=\|a\|_{BMO}\|\sum_{K,V}\ave{\Delta^{b_1}_Kf}^{b'_2}_V\otimes\ave{\Delta^{b_2}_Vg}^{b'_1}_K\|_{H^1_b}\\
&\lesssim \|a\|_{BMO}\|S_b(\sum_{K,V}\ave{\Delta^{b_1}_Kf}^{b'_2}_V\otimes\ave{\Delta^{b_2}_Vg}^{b'_1}_K)\|_{L^1}.
\end{split}
\end{displaymath}

We claim that
\begin{displaymath}
\|S_b(\sum_{K,V}\ave{\Delta^{b_1}_Kf}^{b'_2}_V\otimes\ave{\Delta^{b_2}_Vg}^{b'_1}_K)\|_{L^1}\lesssim\|f\|_{L^2}\|g\|_{L^2}.
\end{displaymath}

To see this, note that
\begin{displaymath}
\begin{split}
&S^2_b(\sum_{K,V}\ave{\Delta^{b_1}_Kf}^{b'_2}_V\otimes\ave{\Delta^{b_2}_Vg}^{b'_1}_K)\\
&=\sum_{K,V}|\Delta^{b_1}_K(\ave{f}^{b'_2}_V)\otimes\Delta^{b_2}_V(\ave{g}^{b'_1}_K)|^2\chi_K\otimes\chi_V\\
&\leq(\sum_K\sup_V|\Delta^{b_1}_K(\ave{f}^{b'_2}_V)|^2\chi_K\otimes\chi_V)\cdot (\sum_V|\Delta^{b_2}_V(\ave{g}^{b'_1}_K)|^2\chi_K\otimes\chi_V)\\
&\leq (\sum_K\sup_V|\Delta^{b_1}_K(\ave{f}^{b'_2}_V)|^2\chi_K\otimes\chi_V)\cdot (\sup_K\sum_V|\Delta^{b_2}_V(\ave{g}^{b'_1}_K)|^2\chi_K\otimes\chi_V)\\
&:= |S_{b_1}M^{b'_2}(f)|^2|M^{b'_1}S_{b_2}(g)|^2,
\end{split}
\end{displaymath}
where the last two operators are just formally defined, but not the compositions of the square functions and maximal functions. Since pointwisely, $|M^{b'_1}(S_{b_2}g)|\leq|S_{b_2}(M^{b'_1}g)|$, by symmetry, it suffices to prove that $S_{b_1}M^{b'_2}:\,L^2\rightarrow L^2$. And this is true because
\begin{displaymath}
\begin{split}
\|S_{b_1}M^{b'_2}f\|_{L^2}&=\|(\sum_K\sup_V|\ave{\Delta^{b_1}_Kf}^{b'_2}_V|^2\chi_K\otimes\chi_V)^{1/2}\|_{L^2}\\
&\lesssim \|(\sum_K|M_2(\Delta^{b_1}_Kf)|^2\chi_K)^{1/2}\|_{L^2}\\
&\lesssim \sum_{i=1}^{2^n}(\int_{\R^n}\|(\sum_K\chi_{K_i}(x)\otimes M_2(\Delta^{b_1}_Kf)|_{K_i}^2(y))^{1/2}\|_{L^2(\R^m)}^2\,dx)^{1/2}\\
&\lesssim \sum_{i=1}^{2^n}(\int_{\R^n}\|(\sum_K|\Delta^{b_1}_Kf|^2\chi_{K_i})^{1/2}\|_{L^2(\R^m)}^2\,dx)^{1/2}\\
&\lesssim (\int_{\R^n}\|(\sum_K|\Delta^{b_1}_Kf|^2)^{1/2}\|_{L^2(\R^m)}^2\,dx)^{1/2}\\
&=(\int_{\R^m}\|S_{b_1}f_y\|_{L^2(\R^n)}^2\,dy)^{1/2}\\
&\approx (\int_{\R^m}\|f_y\|^2_{L^2(\R^n)}\,dy)^{1/2}=\|f\|_{L^2}.
\end{split}
\end{displaymath}
In the above, $M_2$ means the Hardy-Littlewood maximal function with respect to the second variable. In the fourth line, we used the Fefferman-Stein inequality. And in the sixth line, the operator $S_{b_1}$ is the one-parameter $b_1$-adapted square function, defined as $S_{b_1}f=(\sum_I |\Delta^{b_1}_I f|^2)^{1/2}$. It is straightforward to see that $S_{b_1}$ is an $L^2$ isometry up to some constant, which implies the seventh line in the above, where $f_y(x)$ denotes $f(x,y)$.

Hence, the $L^2$ boundedness of the mixed paraproduct is fully justified.
\end{proof}

\section{Main theorem and the strategy}

We return to the main theorem of this paper. We will prove that, under the assumptions stated in Section \ref{assump}, $T$ is bounded on $L^2(\R^n\times\R^m)$ with the operator norm depending only on the constants appearing in the above weak assumptions. By density and boundedness of $b,b'$, it suffices to show that for any $C_0^\infty$ functions $f,g$, there is a universal constant $C$ such that
\begin{displaymath}
|\pair{M_{b'}TM_bf}{g}|\leq C\|f\|_{L^2}\|g\|_{L^2}.
\end{displaymath} 

To prove this, recall that Martikainen \cite{Ma} gave an averaging formula for the bilinear form $\pair{Tf}{g}$ using a probabilistic concept called "goodness" of cubes. Here, if we decompose $f$ using the new defined $b$-adapted martingale difference instead, there is a natural generalization of the averaging formula as follows.

\begin{prop}\label{avefor}
\begin{displaymath}
\begin{split}
\pair{M_{b'}TM_bf}{g}&=\frac{1}{\pi^n_{\good}\pi^m_{\good}}\mathbb{E}_{\omega^n}\mathbb{E}_{\omega^m}\cdot\\
&\sum_{\substack{I_1,I_2\in\mathcal{D}^n\\J_1,J_2\in\mathcal{D}^m}}\chi_{\good}(\operatorname{small}(I_1,I_2))\chi_{\good}(\operatorname{small}(J_1,J_2))\pair{M_{b'}TM_b\Delta^{b_1}_{I_1}\Delta^{b_2}_{J_1}f}{\Delta^{b'_1}_{I_2}\Delta^{b'_2}_{J_2}g}.
\end{split}
\end{displaymath}
\end{prop}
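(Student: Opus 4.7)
The plan is to follow the averaging-formula argument of Martikainen \cite{Ma}, replacing the bi-parameter Haar expansion used there by the $b$- and $b'$-adapted double martingale differences constructed in Section~\ref{assump}. The probabilistic core is the one-parameter NTV/Hyt\"onen trick \cite{NTV,Hy}, and the bi-parameter version amounts to applying it to the two independent random grids $\omega^n, \omega^m$ separately.

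First I fix dyadic grids $\mathcal{D}^n = \mathcal{D}^n_{\omega^n}$ and $\mathcal{D}^m = \mathcal{D}^m_{\omega^m}$, and invoke property~(4) of Section~\ref{assump} to write
\begin{displaymath}
f = \sum_{I_1\in\mathcal{D}^n,\,J_1\in\mathcal{D}^m}\Delta^{b_1}_{I_1}\Delta^{b_2}_{J_1}f,\qquad g = \sum_{I_2\in\mathcal{D}^n,\,J_2\in\mathcal{D}^m}\Delta^{b'_1}_{I_2}\Delta^{b'_2}_{J_2}g,
\end{displaymath}
with convergence in $L^2$. The a priori $L^2$-boundedness of $T$ makes the quadruple pairing sum absolutely convergent, so bilinearity of $\pair{M_{b'}TM_b\,\cdot\,}{\,\cdot\,}$ yields
\begin{displaymath}
\pair{M_{b'}TM_bf}{g} = \sum_{\substack{I_1, I_2\in\mathcal{D}^n\\J_1, J_2\in\mathcal{D}^m}}\pair{M_{b'}TM_b\Delta^{b_1}_{I_1}\Delta^{b_2}_{J_1}f}{\Delta^{b'_1}_{I_2}\Delta^{b'_2}_{J_2}g}.
\end{displaymath}
Since the left-hand side does not depend on $\omega^n, \omega^m$, I apply $\mathbb{E}_{\omega^n}\mathbb{E}_{\omega^m}$ on both sides.

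Next I reparametrize each sum over $\mathcal{D}^n = \mathcal{D}^n_{\omega^n}$ as a sum over $\mathcal{D}^n_0$ via the bijection $I \mapsto I\dotplus\omega^n$ (and analogously for $m$), and use Fubini (justified by the absolute convergence) to swap $\mathbb{E}_{\omega^n}\mathbb{E}_{\omega^m}$ with the outer summation. It then suffices to check, for every fixed standard-grid quadruple $I_1, I_2\in\mathcal{D}^n_0$, $J_1, J_2\in\mathcal{D}^m_0$, that
\begin{displaymath}
\mathbb{E}_{\omega^n}\mathbb{E}_{\omega^m}\Phi = \frac{1}{\pi^n_{\good}\pi^m_{\good}}\mathbb{E}_{\omega^n}\mathbb{E}_{\omega^m}\bigl[\chi_{\good}(\operatorname{small}(I_1\dotplus\omega^n, I_2\dotplus\omega^n))\chi_{\good}(\operatorname{small}(J_1\dotplus\omega^m, J_2\dotplus\omega^m))\Phi\bigr],
\end{displaymath}
where $\Phi = \pair{M_{b'}TM_b\Delta^{b_1}_{I_1\dotplus\omega^n}\Delta^{b_2}_{J_1\dotplus\omega^m}f}{\Delta^{b'_1}_{I_2\dotplus\omega^n}\Delta^{b'_2}_{J_2\dotplus\omega^m}g}$. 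Because $\omega^n$ and $\omega^m$ are independent and the $\omega^n$-dependence of $\Phi$ enters only through $\Delta^{b_1}_{I_i\dotplus\omega^n}$, while its $\omega^m$-dependence is only through $\Delta^{b_2}_{J_j\dotplus\omega^m}$, this reduces to two decoupled one-parameter averaging identities; each of these is the standard NTV/Hyt\"onen fact that in the random dyadic grid, any given cube is good with universal probability $\pi^n_{\good}$ and this event is independent of the random shift data that determines the pairing.

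The main (and essentially only) obstacle is to verify that the $b$-adapted martingale differences play nicely with the random shifts, so that the NTV separation between ``pairing-determining'' shifts and ``goodness-determining'' shifts survives. Since $b_1, b_2, b'_1, b'_2$ are deterministic and $\Delta^{b_1}_{I\dotplus\omega^n}f = \chi_{I\dotplus\omega^n}(E^{b_1}_{p+1}-E^{b_1}_p)f$ depends on $\omega^n$ only through the shifted partition at scales $p$ and $p+1$, the measurability structure of $\Delta^{b_1}_{I\dotplus\omega^n}$ with respect to the random shifts is identical to that of the Haar projection, and the argument of \cite{Ma} transfers verbatim.
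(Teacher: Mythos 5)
Your overall route --- expand in $b$-adapted martingale differences, take $\mathbb{E}_{\omega^n}\mathbb{E}_{\omega^m}$, reparametrize over $\mathcal{D}^n_0\times\mathcal{D}^m_0$, invoke position/goodness independence --- is the one the paper defers to (Martikainen \cite{Ma}, Prop.~2.1), and you are right that $b$-adaptedness does not alter the measurability structure of the martingale projections under the random shifts, since $b$ is deterministic. The gap is in the reduction to a term-by-term identity for each fixed quadruple, which you justify by asserting that the goodness event ``is independent of the random shift data that determines the pairing.'' That independence fails whenever the two cubes in a variable have different side lengths. Say $\ell(I_1)<\ell(I_2)$, so $\operatorname{small}(I_1,I_2)=I_1$. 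The goodness of $I_1\dotplus\omega^n$ is determined by $\{\omega^n_i:\,2^{-i}\ge\ell(I_1)\}$, whereas $\Delta^{b'_1}_{I_2\dotplus\omega^n}g$ --- and hence $\Phi$ --- is determined by the position of $I_2\dotplus\omega^n$ and its children, hence by $\{\omega^n_i:\,2^{-i}<\ell(I_2)\}$. These index sets overlap in the nonempty range $\ell(I_1)\le 2^{-i}<\ell(I_2)$, so $\chi_{\good}(I_1\dotplus\omega^n)$ and $\Phi$ are not independent, and the displayed fixed-quadruple identity
\begin{displaymath}
\mathbb{E}_{\omega^n}\mathbb{E}_{\omega^m}\Phi = \frac{1}{\pi^n_{\good}\pi^m_{\good}}\mathbb{E}_{\omega^n}\mathbb{E}_{\omega^m}\bigl[\chi_{\good}(\operatorname{small}(I_1\dotplus\omega^n,I_2\dotplus\omega^n))\,\chi_{\good}(\operatorname{small}(J_1\dotplus\omega^m,J_2\dotplus\omega^m))\,\Phi\bigr]
\end{displaymath}
is false in general; one can arrange $\Phi$ to be large precisely on the event that $I_1\dotplus\omega^n$ is bad.

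The actual argument in \cite{Hy} and \cite{Ma} inserts the goodness factor only after resumming over the larger cube in each variable. Splitting the quadruple sum by which cube is smaller --- say $\ell(I_1)\le\ell(I_2)$ and $\ell(J_1)\le\ell(J_2)$ --- one first carries out $\sum_{\ell(I_2)\ge\ell(I_1)}\Delta^{b'_1}_{I_2}=E^{b'_1}_{k_1+1}$ with $\ell(I_1)=2^{-k_1}$, and the analogous sum in $J_2$, arriving at $\pair{M_{b'}TM_b\Delta^{b_1}_{I_1}\Delta^{b_2}_{J_1}f}{E^{b'_1}_{k_1+1}E^{b'_2}_{l_1+1}g}$. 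This expression depends on $\omega^n_i$ only for $2^{-i}<\ell(I_1)$ (and similarly in $\omega^m$), so it genuinely is independent of $\chi_{\good}(I_1\dotplus\omega^n)$ and $\chi_{\good}(J_1\dotplus\omega^m)$, and only at this resummed level can one factor out $\pi^n_{\good}\pi^m_{\good}$. There is also a minor bookkeeping point when side lengths coincide, resolved by a fixed convention for $\operatorname{small}$. Your proposal omits this resummation, which is the essential mechanism, so as written it does not establish the proposition.
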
 

To understand the above formula, recall that in \cite{Hy}, a cube $I \in \mathcal{D}^n_{\omega^n}$ is called bad if there exists $\tilde I \in \mathcal{D}^n_{\omega^n}$ so that $\ell(\tilde I) \ge 2^r \ell(I)$ and $d(I, \partial \tilde I) \le 2\ell(I)^{\gamma_n}\ell(\tilde I)^{1-\gamma_n}$. $\gamma_n = \delta/(2n + 2\delta)$, where $\delta > 0$ appears in the kernel estimates. And
$\pi^n_{\textrm{good}}:= \mathbb{P}_{\omega^n}(I \dotplus\omega^n \textrm{ is good})$ is independent of $I \in \mathcal{D}^n_0$. By lemma 2.3 in \cite{Hy}, the parameter $r$ can be chosen large enough such that $\pi^n_{\textrm{good}} > 0$.
Moreover, for a fixed $I \in \mathcal{D}^n_0$
the position of $I \dotplus \omega^n$ depends on $\omega^n_i$ with $2^{-i} < \ell(I)$, while the goodness of $I \dotplus \omega^n$ depends on $\omega^n_i$ with $2^{-i} \ge \ell(I)$. Hence, they are independent. The proof of Proposition \ref{avefor} is identical to the proof of Proposition 2.1 in \cite{Ma}, which we omit here.

Note that as in \cite{Hy} and \cite{Ma}, we do need to justify that the sum on the right hand side converges to the left hand side, which is the only place throughout the paper where we use the a priori $L^2\rightarrow L^2$ boundedness of $T$. Indeed, by the convergence of expectation operators in $L^2$, the boundedness of $T$ will easily imply the convergences in the formula. However, when dealing with specific operators in practice, sometimes we can prove the convergence of the formula without assuming the boundedness assumption.

For example, if $T$ is canonically associated with a standard antisymmetric kernel $K(x,y)$, in the sense that
\begin{displaymath}
K(x,y)=-K(y_1,x_2,x_1,y_2)=-K(x_1,y_2,y_1,x_2),
\end{displaymath}
and $K$ satisfies all the size and H\"older conditions.

Then for any $f=f_1\otimes f_2,g=g_1\otimes g_2\in C^\infty_0(\R^n\times\R^m)$, 
\begin{displaymath}
\pair{Tf}{g}=\int\int\int\int K(x,y)f(y)g(x)\,dxdy
\end{displaymath}
is well defined. Hence, we automatically have the full and partial kernel representations. Also, by antisymmetry,
\begin{displaymath}
\pair{T(\chi_K\otimes\chi_V)}{\chi_K\otimes\chi_V}=0,
\end{displaymath}
which corresponds to the weak boundedness property for $b=b'=1$. With these observations in mind, it is not hard to show that for any $f,g\in C^\infty_0$ and any fixed dyadic grid,
\begin{displaymath}
\pair{Tf}{g}=\lim_{N\rightarrow\infty}\sum_{|p_i|,|q_i|<N}\pair{T\Delta_{p_1}\Delta_{q_1}f}{\Delta_{p_2}\Delta_{q_2}g}.
\end{displaymath}
So the a priori boundedness of $T$ is not necessary any more.

With the averaging formula, it suffices to bound the sum on the right hand side uniformly for any fixed random grids, to do which, we will divide the sum into different parts according to the relative positions of the cubes, and discuss different cases one by one. By symmetry, except for one mixed case ($\ell(I_1)\leq\ell(I_2),\ell(J_1)>\ell(J_2)$), all the other cases are symmetric to $(\ell(I_1)\leq\ell(I_2), \ell(J_1)\leq\ell(J_2))$, which we will start with. 

For the relative position of $I_1, I_2$, there are four different cases: separated (i.e. $\dist(I_1,I_2)>\ell(I_1)^{\gamma_n}\ell(I_2)^{1-\gamma_n}$), inside (i.e. $I_1\subsetneq I_2$), equal, nearby (i.e. $\dist(I_1,I_2)\leq\ell(I_1)^{\gamma_n}\ell(I_2)^{1-\gamma_n}$). Similarly, there are also four different cases for the second variable. Again using symmetry, it suffices to analyze the following ten cases: 
\begin{itemize}
\item separated/separated, separated/inside, separated/equal, separated/nearby, \\
\item inside/inside, inside/equal, inside/nearby, \\
\item equal/equal, equal/nearby, nearby/nearby. 
\end{itemize}

In preparation, we state two control lemma here which will be repeatedly used when we deal with different cases in the following. For simplicity of notation, write
\begin{displaymath}
\sum_{I_1,I_2\subset K}^{(i_1,i_2)}=\sum_{\substack{I_1,I_2\in\mathcal{D}^n(K)\\ \ell(I_1)=2^{-i_1}\ell(K)\\ \ell(I_2)=2^{-i_2}\ell(K)}},
\end{displaymath}
where $K\in\mathcal{D}^n$ and $i_1,i_2\in\mathbb{N}$.

\begin{lem}{(Full control lemma)}
For fixed $i_1,i_2,j_1,j_2\in\N$ and any $f\in L^2(\R^n\times\R^m)$, $g\in L^2(\R^n\times\R^m)$,
\begin{displaymath}
\sum_{\substack{K\in\mathcal{D}^n\\V\in\mathcal{D}^m}}\sum_{I_1,I_2\subset K}^{(i_1,i_2)}\sum_{J_1,J_2\subset V}^{(j_1,j_2)}\frac{|I_1|^{1/2}|I_2|^{1/2}}{|K|}\frac{|J_1|^{1/2}|J_2|^{1/2}}{|V|}\|\Delta^{b_1}_{I_1}\Delta^{b_2}_{J_1}f\|_{L^2}\|\Delta^{b'_1}_{I_2}\Delta^{b'_2}_{J_2}g\|_{L^2}\lesssim \|f\|_{L^2}\|g\|_{L^2}.
\end{displaymath}
\end{lem}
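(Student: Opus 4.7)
The plan is to exploit independence of the sums in $I_1$ versus $I_2$ (and likewise $J_1$ versus $J_2$) at fixed $(K,V)$, apply Cauchy--Schwarz twice, and then invoke the bi-parameter orthogonality property $(4)$ of the $b$-adapted martingale differences. The geometric weight $|I_1|^{1/2}|I_2|^{1/2}/|K|$ is precisely calibrated to cancel the combinatorial counting factor produced when passing from an $\ell^1$ sum to an $\ell^2$ sum; this is the conceptual reason the estimate closes.

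Writing $a_{I,J}:=\|\Delta^{b_1}_I\Delta^{b_2}_J f\|_{L^2}$ and $c_{I,J}:=\|\Delta^{b'_1}_I\Delta^{b'_2}_J g\|_{L^2}$ for brevity, I would first fix $K\in\mathcal{D}^n$ and $V\in\mathcal{D}^m$ and factor the inner quadruple sum as
\[
\Big(\sum_{I_1\subset K}^{(i_1)}\sum_{J_1\subset V}^{(j_1)} a_{I_1,J_1}\Big)\Big(\sum_{I_2\subset K}^{(i_2)}\sum_{J_2\subset V}^{(j_2)} c_{I_2,J_2}\Big),
\]
using the obvious single-index version of the paper's summation shorthand. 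Each factor is a sum of nonnegative terms over a set of cardinality $2^{ni_1}\cdot 2^{mj_1}$ (resp.\ $2^{ni_2}\cdot 2^{mj_2}$), so by Cauchy--Schwarz it is at most the square root of that cardinality times the corresponding $\ell^2$ norm. The resulting counting factor $2^{n(i_1+i_2)/2}\,2^{m(j_1+j_2)/2}$ exactly cancels the weight
\[
\frac{|I_1|^{1/2}|I_2|^{1/2}}{|K|}\cdot\frac{|J_1|^{1/2}|J_2|^{1/2}}{|V|} = 2^{-n(i_1+i_2)/2}\,2^{-m(j_1+j_2)/2},
\]
reducing the sum in question to
\[
\sum_{K,V}\Big(\sum_{I_1\subset K,\,J_1\subset V}^{(i_1,j_1)} a_{I_1,J_1}^2\Big)^{1/2}\Big(\sum_{I_2\subset K,\,J_2\subset V}^{(i_2,j_2)} c_{I_2,J_2}^2\Big)^{1/2}.
\]

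A second application of Cauchy--Schwarz in $(K,V)$, together with the observation that each $I_1\in\mathcal{D}^n$ (resp.\ $J_1\in\mathcal{D}^m$) has a unique dyadic ancestor of side length $2^{i_1}\ell(I_1)$ (resp.\ $2^{j_1}\ell(J_1)$), lets the outer sums unfold to give
\[
\sum_{K,V}\sum_{I_1\subset K,\,J_1\subset V}^{(i_1,j_1)} a_{I_1,J_1}^2 \;=\; \sum_{I_1\in\mathcal{D}^n,\,J_1\in\mathcal{D}^m}\|\Delta^{b_1}_{I_1}\Delta^{b_2}_{J_1}f\|_{L^2}^2 \;\lesssim\; \|f\|_{L^2}^2,
\]
by property $(4)$, and the symmetric bound holds for $c$ and $g$. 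Combining these completes the argument. There is no real obstacle here beyond matching the geometric weight to the counting factor correctly; the lemma is essentially combinatorial bookkeeping on top of the one substantive input, property $(4)$, which was already established by iterating the one-parameter result of \cite{NTV}.
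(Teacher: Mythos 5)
Your proof is correct and takes essentially the same approach as the paper: both hinge on the observation that the weight $\frac{|I_1|^{1/2}|I_2|^{1/2}}{|K|}\frac{|J_1|^{1/2}|J_2|^{1/2}}{|V|}=2^{-n(i_1+i_2)/2}2^{-m(j_1+j_2)/2}$ exactly cancels the dyadic counting factor under Cauchy--Schwarz, after which property $(4)$ of the $b$-adapted double martingale differences closes the estimate. The only cosmetic difference is that you apply Cauchy--Schwarz in two nested stages (inner indices at fixed $(K,V)$, then over $(K,V)$), whereas the paper uses a single global Cauchy--Schwarz over all six indices; the intermediate expressions differ slightly but the ingredients and final bound are identical.
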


\begin{proof}
It follows as a consequence of H\"{o}lder inequality.
\begin{displaymath}
\begin{split}
LHS&\leq (\sum_{K,V}\sum_{I_1,I_2\subset K}^{(i_1,i_2)}\sum_{J_1,J_2\subset V}^{(j_1,j_2)}\frac{|I_1||J_1|}{|K||V|}\|\Delta^{b_1}_{I_1}\Delta^{b_2}_{J_1}f\|^2_{L^2})^{\frac{1}{2}}(\sum_{K,V}\sum_{I_1,I_2\subset K}^{(i_1,i_2)}\sum_{J_1,J_2\subset V}^{(j_1,j_2)}\frac{|I_2||J_2|}{|K||V|}\|\Delta^{b'_1}_{I_2}\Delta^{b'_2}_{J_2}g\|^2_{L^2})^{\frac{1}{2}}\\
&=(2^{i_2n}2^{j_2m}\sum_{K,V}\sum_{I_1\subset K}^{(i_1)}\sum_{J_1\subset V}^{(j_1)}2^{-i_1n}2^{-j_1m}\|\Delta^{b_1}_{I_1}\Delta^{b_2}_{J_1}f\|^2_{L^2})^{\frac{1}{2}}\cdot\\
&\qquad(2^{i_1n}2^{j_1m}\sum_{K,V}\sum_{I_2\subset K}^{(i_2)}\sum_{J_2\subset V}^{(j_2)}2^{-i_2n}2^{-j_2m}\|\Delta^{b'_1}_{I_2}\Delta^{b'_2}_{J_2}f\|^2_{L^2})^{\frac{1}{2}}\\
&=(\sum_{K,V}\sum_{I_1\subset K}^{(i_1)}\sum_{J_1\subset V}^{(j_1)}\|\Delta^{b_1}_{I_1}\Delta^{b_2}_{J_1}f\|^2_{L^2})^{\frac{1}{2}}(\sum_{K,V}\sum_{I_2\subset K}^{(i_2)}\sum_{J_2\subset V}^{(j_2)}\|\Delta^{b'1_1}_{I_2}\Delta^{b'_2}_{J_2}g\|^2_{L^2})^{\frac{1}{2}}\\
&\lesssim \|f\|_{L^2}\|g\|_{L^2}.
\end{split}
\end{displaymath}
In the last step above, we used the $L^2$ property of $b$-adapted double martingale difference.
\end{proof}

\begin{lem}{(Partial control lemma)}
For fixed $i_1,i_2,j_1,j_2\in\N$ and any $f\in L^2(\R^n\times\R^m)$, $g\in L^2(\R^n\times\R^m)$,
\begin{displaymath}
\sum_{K\in\mathcal{D}^n}\sum_{I_1,I_2\subset K}^{(i_1,i_2)}\frac{|I_1|^{1/2}|I_2|^{1/2}}{|K|}\|\Delta^{b_1}_{I_1}f\|_{L^2}\|\Delta^{b'_1}_{I_2}g\|_{L^2}\lesssim \|f\|_{L^2}\|g\|_{L^2},
\end{displaymath}
and
\begin{displaymath}
\sum_{V\in\mathcal{D}^m}\sum_{J_1,J_2\subset V}^{(j_1,j_2)}\frac{|J_1|^{1/2}|J_2|^{1/2}}{|V|}\|\Delta^{b_2}_{J_1}f\|_{L^2}\|\Delta^{b'_2}_{J_2}g\|_{L^2}\lesssim \|f\|_{L^2}\|g\|_{L^2}.
\end{displaymath}
\end{lem}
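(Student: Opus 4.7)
The plan is to adapt the proof of the Full Control Lemma, which becomes simpler here since the triple sum only involves martingale differences in one variable. First, I would factor the weight as
\begin{displaymath}
\frac{|I_1|^{1/2}|I_2|^{1/2}}{|K|}=\Bigl(\frac{|I_1|}{|K|}\Bigr)^{1/2}\Bigl(\frac{|I_2|}{|K|}\Bigr)^{1/2},
\end{displaymath}
and apply the Cauchy--Schwarz inequality to the triple sum over $(K,I_1,I_2)$ to separate the $f$-contribution from the $g$-contribution.

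For the $f$-factor $\sum_{K}\sum_{I_1,I_2\subset K}^{(i_1,i_2)}\frac{|I_1|}{|K|}\|\Delta^{b_1}_{I_1}f\|_{L^2}^2$, the key observation is that the summand is independent of $I_2$, so the inner sum over $I_2\subset K$ at generation $i_2$ below $K$ contributes exactly the factor $2^{i_2 n}$. Moreover, every $I_1\in\mathcal{D}^n$ has a unique dyadic ancestor $K$ at generation $i_1$ higher, so the remaining double sum over $(K,I_1)$ collapses to $\sum_{I_1\in\mathcal{D}^n}$. Using $|I_1|/|K|=2^{-i_1 n}$, the $f$-factor equals
\begin{displaymath}
2^{(i_2-i_1)n}\sum_{I_1\in\mathcal{D}^n}\|\Delta^{b_1}_{I_1}f\|_{L^2(\R^n\times\R^m)}^2.
\end{displaymath}
The one-parameter $L^2$-equivalence for $b_1$-adapted martingale differences from \cite{NTV}, applied in the first variable with Fubini in the second, bounds this by $2^{(i_2-i_1)n}\|f\|_{L^2}^2$. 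A symmetric bookkeeping yields that the $g$-factor is at most $2^{(i_1-i_2)n}\|g\|_{L^2}^2$. Taking the product of the square roots, the powers of $2$ cancel exactly and we obtain the desired bound $\|f\|_{L^2}\|g\|_{L^2}$, with constant independent of $i_1,i_2$.

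The second inequality follows by the identical argument with $V,J_1,J_2,\mathcal{D}^m$ and the $b_2$-adapted martingale differences in the second variable in place of their counterparts above. There is no real obstacle here: the proof reduces to a routine Cauchy--Schwarz, a counting of level-$i_2$ subcubes of $K$ (respectively, level-$j_2$ subcubes of $V$), and one invocation of the one-parameter orthogonality of $b$-adapted martingale differences; the delicate point is merely that the weight $\frac{|I_1|^{1/2}|I_2|^{1/2}}{|K|}$ splits exactly so as to balance the combinatorial factors from the subcube count.
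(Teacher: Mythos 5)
Your proof is correct and follows exactly the route the paper intends: the paper proves the Partial Control Lemma by reference to the Full Control Lemma ("a similar technique\dots using the $L^2$ property of the $b$-adapted martingale difference of only one variable"), and your Cauchy--Schwarz, subcube counting, and one-variable orthogonality-plus-Fubini argument is precisely the same computation carried out with one variable suppressed. The powers $2^{(i_2-i_1)n}$ and $2^{(i_1-i_2)n}$ cancel as you say, matching the cancellation visible in the paper's displayed proof of the full version.
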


These two inequalities are symmetric, and they can both be derived using a similar technique as for the above lemma. The only difference here is that we need to use the $L^2$ property of the $b$-adapted martingale difference of only one variable instead.

Before we move on to the main part of the proof of the theorem, i.e. the case by case estimate of summands in the averaging formula, let's look at an example to see how our theory fits into some known results of boundedness of bi-parameter singular integral operators.

Consider operators associated with antisymmetric standard kernels. Journ\'e, in \cite{Jo}, proved that if $K=L\tilde{A}$, the bicommutator of Calder\'on-Coifman type, where $L$ is any standard antisymmetric function, and 
\begin{displaymath}
\tilde{A}(x,y)=\frac{A(x_1,x_2)+A(y_1,y_2)-A(y_1,x_2)-A(x_1,y_2)}{(x_1-y_1)(x_2-y_2)},
\end{displaymath}
for some $A:\,\R^n\times\R^m\rightarrow \C$ such that $\partial^2_{12}A\in L^\infty$, then, the $L^2\rightarrow L^2$ boundedness of the operator associated to $L$ implies $T1\in BMO$, as well as the other $BMO$ conditions. It is also not hard to verify directly that $T$ satisfies the weak boundedness property and the four diagonal $BMO$ assumptions. (All of them are actually zero!). Hence, by our main theorem, $T$ is bounded on $L^2$ with operator norm controlled by the weak assumptions.

\section{Separated/Separated: $\sigma_{\out/\out}$}
Define $I_1\vee I_2 = \bigcap_{K \in \mathcal{D}^n, \, K \supset I_1 \cup I_2} K$, i.e. the smallest $K$ such that $I_1\cup I_2\subset K$, and similarly for $J_1\vee J_2$. Then since both of them are separated and $I_1,J_1$ are good, it is proved in \cite{Hy} by Hyt\"{o}nen that $\ell(I_1)^{\gamma_n}\ell(K)^{1-\gamma_n} \lesssim \dist(I_1,I_2)$ and $\ell(J_1)^{\gamma_m}\ell(V)^{1-\gamma_m}\lesssim \dist(J_1,J_2)$.

Hence, we can write

\begin{displaymath}
\sigma_{\out/\out}=\sum_{i_2=1}^\infty\sum_{i_1=i_2}^\infty\sum_{j_2=1}^\infty\sum_{j_1=j_2}^\infty\sum_{K,V}\sum_{\substack{I_1,I_2\subset K\\I_1\vee I_2=K\\I_1,I_2\,\mbox{separated}}}^{(i_1,i_2)}\sum_{\substack{J_1,J_2\subset V\\J_1\vee J_2=V\\J_1,J_2\,\mbox{separated}}}^{(j_1,j_2)}\pair{M_{b'}TM_b\Delta^{b_1}_{I_1}\Delta^{b_2}_{J_1}f}{\Delta^{b'_1}_{I_2}\Delta^{b'_2}_{J_2}g}.
\end{displaymath}

The main goal of this section is to show that the following inequality holds.

\begin{prop}
\begin{displaymath}
\begin{split}
&|\pair{M_{b'}TM_b\Delta^{b_1}_{I_1}\Delta^{b_2}_{J_1}f}{\Delta^{b'_1}_{I_2}\Delta^{b'_2}_{J_2}g}|\\
&\lesssim 2^{-i_1\delta/2}2^{-j_1\delta/2}\frac{|I_1|^{1/2}|I_2|^{1/2}}{|K|}\frac{|J_1|^{1/2}|J_2|^{1/2}}{|V|}\|\Delta^{b_1}_{I_1}\Delta^{b_2}_{J_1}f\|_{L^2}\|\Delta^{b'_1}_{I_2}\Delta^{b'_2}_{J_2}g\|_{L^2}.
\end{split}
\end{displaymath}
\end{prop}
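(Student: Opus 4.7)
The plan is to exploit the full kernel representation together with the double vanishing mean of $\Delta^{b_1}_{I_1}\Delta^{b_2}_{J_1}f$ and the mixed (full) Hölder condition on $K$. Write $F=\Delta^{b_1}_{I_1}\Delta^{b_2}_{J_1}f$ (supported on $I_1\times J_1$) and $G=\Delta^{b'_1}_{I_2}\Delta^{b'_2}_{J_2}g$ (supported on $I_2\times J_2$). Since $I_1,I_2$ and $J_1,J_2$ are separated, the full kernel representation applies and
\[
\pair{M_{b'}TM_b F}{G}=\iint_{I_2\times J_2}\iint_{I_1\times J_1} K(x,y)\,b(y)b'(x)F(y)G(x)\,dy\,dx.
\]
The zero-average properties $\int F(\cdot,y_2)b_1\,dy_1=0$ and $\int F(y_1,\cdot)b_2\,dy_2=0$ let me replace $K(x,y)$ inside the integral by
\[
K(x,y)-K(x,(c_{I_1},y_2))-K(x,(y_1,c_{J_1}))+K(x,(c_{I_1},c_{J_1})).
\]

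Next, I apply the full Hölder condition (the first inequality in the kernel hypothesis) to this fourfold difference. Its hypothesis $|y_1-c_{I_1}|\le |x_1-y_1|/2$, $|y_2-c_{J_1}|\le|x_2-y_2|/2$ is where goodness enters: since $I_1$ is good relative to $I_2$ (with $r$ chosen large), separation gives $\dist(I_1,I_2)\gg \ell(I_1)$, so $|x_1-y_1|\ge \dist(I_1,I_2)\ge 2\operatorname{diam}(I_1)\ge 2|y_1-c_{I_1}|$, and symmetrically for the second variable. This yields
\[
|K(x,y)-\cdots|\lesssim \frac{\ell(I_1)^\delta}{|x_1-y_1|^{n+\delta}}\cdot\frac{\ell(J_1)^\delta}{|x_2-y_2|^{m+\delta}}.
\]
On the supports, $|x_1-y_1|\gtrsim \dist(I_1,I_2)$ and $|x_2-y_2|\gtrsim\dist(J_1,J_2)$, so after pulling the kernel bound out of the integral and using $\|b\|_\infty,\|b'\|_\infty<\infty$, I obtain
\[
|\pair{M_{b'}TM_bF}{G}|\lesssim \frac{\ell(I_1)^\delta\ell(J_1)^\delta}{\dist(I_1,I_2)^{n+\delta}\dist(J_1,J_2)^{m+\delta}}\,|I_1||J_1||I_2||J_2|\,\|F\|_\infty\|G\|_\infty.
\]

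To pass to $L^2$, I use that $F$ is constant on children of $I_1\times J_1$, hence $\|F\|_\infty\le \|F\|_{L^2}/(|I_1||J_1|)^{1/2}$, and similarly for $G$. Finally, since $I_1,J_1$ are good and $I_1\vee I_2=K$, $J_1\vee J_2=V$ with separation, Hytönen's estimate gives
\[
\dist(I_1,I_2)\gtrsim \ell(I_1)^{\gamma_n}\ell(K)^{1-\gamma_n},\qquad \dist(J_1,J_2)\gtrsim \ell(J_1)^{\gamma_m}\ell(V)^{1-\gamma_m}.
\]
Plugging in $\gamma_n(n+\delta)=\delta/2$ and $(1-\gamma_n)(n+\delta)=n+\delta/2$ (and symmetrically for $m$) converts
\[
\frac{\ell(I_1)^\delta}{\dist(I_1,I_2)^{n+\delta}}\lesssim \frac{\ell(I_1)^{\delta/2}}{\ell(K)^{n+\delta/2}}=\frac{2^{-i_1\delta/2}}{|K|},
\]
and likewise produces $2^{-j_1\delta/2}/|V|$ from the second factor. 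Combining these with the $L^\infty$-to-$L^2$ step yields exactly the claimed inequality.

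The main technical point to be careful about is step two, namely verifying that the goodness assumption (with $r$ chosen sufficiently large) ensures the separation hypotheses $|y_1-c_{I_1}|\le|x_1-y_1|/2$ and $|y_2-c_{J_1}|\le|x_2-y_2|/2$ uniformly; this is what legitimizes a single clean application of the full Hölder condition rather than a splitting into several cases.
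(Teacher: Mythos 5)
Your proof is correct and takes essentially the same approach as the paper: full kernel representation, cancellation of the martingale differences to replace $K(x,y)$ by the fourfold difference, the full H\"older condition, and the goodness estimate $\ell(I_1)^{\gamma_n}\ell(K)^{1-\gamma_n}\lesssim\dist(I_1,I_2)$ to absorb the distance into powers of $\ell(K)$ and $2^{-i_1\delta/2}$. The only deviation is cosmetic: in the final step you pass through $\|F\|_\infty\lesssim\|F\|_{L^2}/|I_1\times J_1|^{1/2}$ (using that $F$ is constant on children), whereas the paper simply applies Cauchy--Schwarz to $\int_{I_1\times J_1}|F|\le|I_1\times J_1|^{1/2}\|F\|_{L^2}$; the two yield the same bound.
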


If this is true, then by the full control lemma we stated in the beginning, $\sigma_{\out/\out}$ can be bounded by $\|f\|_{L^2}\|g\|_{L^2}$. 

\begin{proof}
Since the two functions are well separated on both variables, by the full kernel representation,
\begin{displaymath}
LHS=|\int_{I_1\times J_1}\int_{I_2\times J_2} K(x,y)\Delta^{b_1}_{I_1}\Delta^{b_2}_{J_1}f(y)b(y)\Delta^{b'_1}_{I_2}\Delta^{b'_2}_{J_2}g(x)b'(x)\,dxdy|.
\end{displaymath}

Using the cancellation properties of the martingale differences, we can replace $K(x,y)$ in the above by
\begin{equation}\label{sepsepker} 
K(x,y)-K(x,y_1,c_{J_1})-K(x,c_{I_1},y_2)+K(x,c_{I_1},c_{J_1}).
\end{equation}
Since $|y_1-c_{I_1}|\leq \ell(I_1)/2\leq\frac{1}{2}\ell(I_1)^{\gamma_n}\ell(I_2)^{1-\gamma_n}\leq \dist(I_1,I_2)/2\leq |x_1-c_{I_1}|/2$, and similarly $|y_2-c_{J_1}|\leq |x_2-c_{J_1}|/2$, by the full H\"{o}lder condition,
\begin{displaymath}
\begin{split}
|(\ref{sepsepker})|&\lesssim \frac{|y_1-c_{I_1}|^\delta}{|x_1-c_{I_1}|^{n+\delta}}\frac{|y_2-c_{J_1}|^\delta}{|x_2-c_{J_1}|^{m+\delta}}\\
&\lesssim \ell(I_1)^\delta\dist(I_1,I_2)^{-n-\delta}\ell(J_1)^\delta\dist(J_1,J_2)^{-m-\delta}\\
&\lesssim \ell(I_1)^{\delta/2}\ell(K)^{-\delta/2}|K|^{-1}\ell(J_1)^{\delta/2}\ell(V)^{-\delta/2}|V|^{-1}\\
&=2^{-i_1\delta/2}2^{-j_1\delta/2}|K|^{-1}|V|^{-1},
\end{split}
\end{displaymath}
where for the third inequality we used $\ell(I_1)^{\gamma_n}\ell(K)^{1-\gamma_n} \lesssim \dist(I_1,I_2)$ and $\ell(J_1)^{\gamma_m}\ell(V)^{1-\gamma_m}\lesssim \dist(J_1,J_2)$. Then, by H\"{o}lder inequality and the boundedness of $b,b'$, this implies
\begin{displaymath}
\begin{split}
LHS&\lesssim 2^{-i_1\delta/2}2^{-j_1\delta/2}|K|^{-1}|V|^{-1}(\int_{I_1\times J_1}|\Delta^{b_1}_{I_1}\Delta^{b_2}_{J_1}f(y)|\,dy)(\int_{I_2\times J_2}|\Delta^{b'_1}_{I_2}\Delta^{b'_2}_{J_2}g(x)|\,dx)\\
&\leq RHS.
\end{split}
\end{displaymath}

\end{proof}

\section{Separated/Inside: $\sigma_{\out/\inside}$}

Since $J_1\subsetneq J_2$, $J_1$ is contained in some child of $J_2$, which we denote by $J_{2,1}$. Then $\Delta^{b'_1}_{I_2}\Delta^{b'_2}_{J_2}g$ is constant with respect to $x_2$ on $J_{2,1}$, and we have
\begin{displaymath}
\begin{split}
&\pair{M_{b'}TM_b\Delta^{b_1}_{I_1}\Delta^{b_2}_{J_1}f}{\Delta^{b'_1}_{I_2}\Delta^{b'_2}_{J_2}g}\\
&=\pair{M_{b'}TM_b\Delta^{b_1}_{I_1}\Delta^{b_2}_{J_1}f}{(\chi_{J_{2,1}}+\chi_{J_{2,1}^c})\Delta^{b'_1}_{I_2}\Delta^{b'_2}_{J_2}g}\\
&=\pair{M_{b'}TM_b\Delta^{b_1}_{I_1}\Delta^{b_2}_{J_1}f}{\chi_{J_{2,1}^c}(\Delta^{b'_1}_{I_2}\Delta^{b'_2}_{J_2}g-\ave{\Delta^{b'_1}_{I_2}\Delta^{b'_2}_{J_2}g}^{b'_2}_{J_{2,1}})}\\
&\qquad +\pair{M_{b'}TM_b\Delta^{b_1}_{I_1}\Delta^{b_2}_{J_1}f}{\ave{\Delta^{b'_1}_{I_2}\Delta^{b'_2}_{J_2}g}^{b'_2}_{J_{2,1}}(x_1)\otimes 1(x_2)}\\
&:=I+II
\end{split}
\end{displaymath}
where $\ave{f}^{b_2}_{J}$ denotes the $b_2$-adapted average of $f$ over $J$ with respect to the second variable: $(\int_{J}b_2)^{-1}(\int_{J}fb_2)$.

Write
\begin{displaymath}
\begin{split}
\sigma_{\out/\inside}&=\sum_{i_2=1}^\infty\sum_{i_1=i_2}^\infty\sum_{j_1=1}^\infty\sum_{K\in\mathcal{D}^n}\sum_{J_2\in\mathcal{D}^m}\sum_{\substack{\dist(I_1,I_2)>\ell(I_1)^{\gamma_n}\ell(I_2)^{1-\gamma_n}\\I_1\vee I_2=K}}^{(i_1,i_2)}\sum_{J_1\subset J_2}^{(j_1)} I+II\\
&:=\sigma_{\out/\inside}'+\sigma_{\out/\inside}''.
\end{split}
\end{displaymath}

\subsection*{Part $\sigma_{\out/\inside}'$}
In order to bound $\sigma_{\out/\inside}'$ by $\|f\|_{L^2}\|g\|_{L^2}$, by the full control lemma, it suffices to prove the following.
\begin{prop}
\begin{displaymath}
|I|\lesssim \frac{|I_1|^{1/2}|I_2|^{1/2}}{|K|}\frac{|J_1|^{1/2}}{|J_2|^{1/2}}2^{-i_1\delta/2}2^{-j_1\delta/2}\|\Delta^{b_1}_{I_1}\Delta^{b_2}_{J_1}f\|_{L^2}\|\Delta^{b'_1}_{I_2}\Delta^{b'_2}_{J_2}g\|_{L^2}.
\end{displaymath}
\end{prop}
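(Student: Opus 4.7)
The plan is to use the full kernel representation on the bilinear form $I$ (extended to non-compactly-supported test functions by the limiting argument from Section~\ref{assump}), exploit the double $y$-cancellation of $\Delta^{b_1}_{I_1}\Delta^{b_2}_{J_1}f$, and then carefully handle the $x_2$-integration over the unbounded region $J_{2,1}^c$.

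Set $h(x_1):=\ave{\Delta^{b'_1}_{I_2}\Delta^{b'_2}_{J_2}g}^{b'_2}_{J_{2,1}}$ and $G(x):=\chi_{J_{2,1}^c}(x_2)\bigl[\Delta^{b'_1}_{I_2}\Delta^{b'_2}_{J_2}g(x)-h(x_1)\bigr]$, so that $I=\pair{M_{b'}TM_b\Delta^{b_1}_{I_1}\Delta^{b_2}_{J_1}f}{G}$. The key observation is that $\Delta^{b_1}_{I_1}\Delta^{b_2}_{J_1}f$ sits on $I_1\times J_1\subset I_1\times J_{2,1}$ while $G$ sits on $I_2\times J_{2,1}^c$, so the two supports are separated in both variables. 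Using the zero $b_1$-mean in $y_1$ and zero $b_2$-mean in $y_2$ of $\Delta^{b_1}_{I_1}\Delta^{b_2}_{J_1}f$, I subtract $K(x,c_{I_1},y_2)+K(x,y_1,c_{J_1})-K(x,c_{I_1},c_{J_1})$ from $K(x,y)$ and apply the full H\"older condition to bound the modified kernel by
\[
C\,\ell(I_1)^\delta\ell(J_1)^\delta|x_1-c_{I_1}|^{-n-\delta}|x_2-c_{J_1}|^{-m-\delta};
\]
the H\"older hypotheses hold because goodness of $I_1$ and $J_1$ gives $|x_1-c_{I_1}|\gtrsim\ell(I_1)^{\gamma_n}\ell(K)^{1-\gamma_n}\gg\ell(I_1)$ and, on $J_{2,1}^c$, $|x_2-c_{J_1}|\gtrsim\ell(J_1)^{\gamma_m}\ell(J_2)^{1-\gamma_m}\gg\ell(J_1)$.

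The $x_1$- and $y$-integrations mirror the separated/separated case: uniformly bounding $|x_1-c_{I_1}|^{-n-\delta}$ on $I_2$ and using $\gamma_n=\delta/(2n+2\delta)$ collapses $\ell(I_1)^\delta\int_{I_2}|x_1-c_{I_1}|^{-n-\delta}\,dx_1$ to $2^{-i_1\delta/2}|I_2|/|K|$, while $\|\Delta^{b_1}_{I_1}\Delta^{b_2}_{J_1}f\|_{L^1(I_1\times J_1)}\leq|I_1|^{1/2}|J_1|^{1/2}\|\Delta^{b_1}_{I_1}\Delta^{b_2}_{J_1}f\|_{L^2}$ supplies the remaining $|I_1|^{1/2}|J_1|^{1/2}$.

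The main obstacle is the $x_2$-integration over the \emph{unbounded} region $J_{2,1}^c$, caused by the $-h(x_1)\chi_{J_2^c}(x_2)$ tail of $G$. I decompose $J_{2,1}^c=(J_2\setminus J_{2,1})\cup J_2^c$. On the bounded piece $J_2\setminus J_{2,1}$, the goodness-derived lower bound on $|x_2-c_{J_1}|$ (with $\gamma_m=\delta/(2m+2\delta)$) yields $|x_2-c_{J_1}|^{-m-\delta}\lesssim\ell(J_1)^{-\delta/2}\ell(J_2)^{-m-\delta/2}$, and pseudo-accretivity of $b'_2$ gives $\|h\|_{L^2(I_2)}\lesssim|J_2|^{-1/2}\|\Delta^{b'_1}_{I_2}\Delta^{b'_2}_{J_2}g\|_{L^2}$; Cauchy--Schwarz then controls this piece by a constant times $\ell(J_1)^{-\delta/2}\ell(J_2)^{-m/2-\delta/2}|I_2|^{1/2}\|\Delta^{b'_1}_{I_2}\Delta^{b'_2}_{J_2}g\|_{L^2}$. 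On the infinite tail $J_2^c$, only $-h$ survives and the radial estimate $\int_{J_2^c}|x_2-c_{J_1}|^{-m-\delta}\,dx_2\lesssim(\ell(J_1)^{\gamma_m}\ell(J_2)^{1-\gamma_m})^{-\delta}$ combined with $\|h\|_{L^1(I_2)}\lesssim|I_2|^{1/2}|J_2|^{-1/2}\|\Delta^{b'_1}_{I_2}\Delta^{b'_2}_{J_2}g\|_{L^2}$ yields a bound smaller than that of the bounded piece by the factor $2^{-j_1\delta(1/2-\gamma_m)}\leq 1$. Multiplying the resulting $x_2$-estimate by $\ell(J_1)^\delta$ and the $y$-factor $|J_1|^{1/2}$, the $J$-exponents telescope via $\ell(J_2)=2^{j_1}\ell(J_1)$ to $2^{-j_1(m+\delta)/2}=2^{-j_1\delta/2}|J_1|^{1/2}|J_2|^{-1/2}$, which together with the $x_1$/$y$-bound gives the claimed inequality. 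The finitely many cases $j_1\leq r$ in which goodness of $J_1$ against $J_{2,1}$ is unavailable form a bounded leftover sum, absorbed by a direct estimate analogous to the equal/nearby cases.
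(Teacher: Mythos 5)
Your Case 1 (i.e.\ $\ell(J_1)<2^{-r}\ell(J_2)$, so that the goodness of $J_1$ against $J_{2,1}$ supplies the separation $\dist(J_1,J_{2,1}^c)\gtrsim\ell(J_1)^{\gamma_m}\ell(J_2)^{1-\gamma_m}$) is correct and tracks the paper's argument, though you make it a bit harder than necessary: the paper uses the pointwise bound $|G|\lesssim|I_2|^{-1/2}|J_2|^{-1/2}\|\Delta^{b'_1}_{I_2}\Delta^{b'_2}_{J_2}g\|_{L^2}$ (since $\Delta^{b'_1}_{I_2}\Delta^{b'_2}_{J_2}g$ is constant on the children of $I_2\times J_2$) and then simply integrates $\int_{J_{2,1}^c}|x_2-c_{J_1}|^{-m-\delta}\,dx_2\lesssim\dist(J_1,J_{2,1}^c)^{-\delta}$, avoiding the split into $J_2\setminus J_{2,1}$ and $J_2^c$ and the separate $L^2$/$L^1$ bookkeeping for $h$.

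The genuine gap is in your treatment of the finitely many scales $j_1\leq r$. You assert these are ``absorbed by a direct estimate analogous to the equal/nearby cases,'' but that analogy does not capture what is actually required. In this regime $J_1$ may sit arbitrarily close to $\partial J_{2,1}$, so for $x_2\in 3J_1\cap J_{2,1}^c$ the H\"older hypothesis $|y_2-c_{J_1}|\leq|x_2-y_2|/2$ fails and you cannot invoke the $y_2$-cancellation at all. The paper's Case 2 therefore splits $J_{2,1}^c$ once more into a near piece $3J_1\cap J_{2,1}^c$ and a far piece $(3J_1)^c\cap J_{2,1}^c$. On the far piece the full H\"older condition is still available (one only needs $|x_2-c_{J_1}|\geq\ell(J_1)$, not goodness). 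On the near piece one must change the kernel only by $K(x,y)-K(x,c_{I_1},y_2)$ and use the \emph{mixed H\"older-size} condition, exploiting the $y_1$-cancellation of $\Delta^{b_1}_{I_1}$ alone, then integrate $|x_2-y_2|^{-m}$ over the disjoint sets $J_1\times(3J_1\cap J_{2,1}^c)$. None of the equal or nearby cases in the paper use the mixed H\"older-size condition in this way (they rely on partial kernel representations and size conditions on bounded regions), so the comparison is misleading; the present case still involves the unbounded tail $J_2^c$ and still requires the extra near/far split. As written, your proposal has no argument that produces the $2^{-i_1\delta/2}$ decay with the correct $J$-factor for $j_1\leq r$.
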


\begin{proof}
Case 1: $\ell(J_1)<2^{-r}\ell(J_2)$.

The two functions in the pairing are separated in both variables, which enables us to use the full kernel representation:
\begin{displaymath}
I=\int_{I_1\times J_1}\int_{I_2\times J_{2,1}^c}K(x,y)\Delta^{b_1}_{I_1}\Delta^{b_2}_{J_1}f(y)b(y)(\Delta^{b'_1}_{I_2}\Delta^{b'_2}_{J_2}g(x)-\ave{\Delta^{b'_1}_{I_2}\Delta^{b'_2}_{J_2}g}^{b'_2}_{J_{2,1}})b'(x)\,dxdy.
\end{displaymath}
Since in this case, the size of $J_1$ is "significantly" small compared with $J_2$, by the goodness of $J_1$, $\dist(J_1, J_{2,1}^c)\geq 2\ell(J_1)^{\gamma_m}\ell(J_{2,1})^{1-\gamma_m}\geq \ell(J_1)^{\gamma_m}\ell(J_2)^{1-\gamma_m}$, which implies good separation on both variables. Hence, using the cancellation property in $y$ variable, we can change the kernel $K(x,y)$ in the above to
\begin{displaymath}
K(x,y)-K(x,y_1,c_{J_1})-K(x,c_{I_1},y_2)+K(x,c_{I_1},c_{J_1}).
\end{displaymath}
By H\"{o}lder condition and a similar computation as in the Separated/Separated case,
\begin{displaymath}
\begin{split}
|I|&\lesssim \ell(I_1)^{\delta/2}\ell(K)^{-\delta/2}|K|^{-1}\ell(J_1)^\delta(\int_{I_1\times J_1}|\Delta^{b_1}_{I_1}\Delta^{b_2}_{J_1}f|\,dy)\cdot\\
&\qquad (\int_{I_2\times J_{2,1}^c}\frac{1}{|x_2-c_{J_1}|^{m+\delta}}|\Delta^{b'_1}_{I_2}\Delta^{b'_2}_{J_2}g-\ave{\Delta^{b'_1}_{I_2}\Delta^{b'_2}_{J_2}g}^{b'_2}_{J_{2,1}}|\,dx)\\
&\leq 2^{-i_1\delta/2}|K|^{-1}\ell(J_1)^\delta\|\Delta^{b_1}_{I_1}\Delta^{b_2}_{J_1}f\|_{L^2}|I_1|^{1/2}|J_1|^{1/2}\cdot\\
&\qquad (\int_{I_2\times J_{2,1}^c}\frac{1}{|x_2-c_{J_1}|^{m+\delta}}(|\Delta^{b'_1}_{I_2}\Delta^{b'_2}_{J_2}g|+|\ave{\Delta^{b'_1}_{I_2}\Delta^{b'_2}_{J_2}g}^{b'_2}_{J_{2,1}}|)\,dx)\\
&=2^{-i_1\delta/2}\frac{|I_1|^{1/2}}{|K|}|J_1|^{1/2}\ell(J_1)^\delta\|\Delta^{b_1}_{I_1}\Delta^{b_2}_{J_1}f\|_{L^2}\cdot\\
&\qquad (\int_{I_2\times J_{2,1}^c}\frac{1}{|x_2-c_{J_1}|^{m+\delta}}|\ave{\Delta^{b'_1}_{I_2}\Delta^{b'_2}_{J_2}g}^{b'_2}_{J_{2,1}}|\,dx+\sum_{j=2}^{2^m}\int_{I_2\times J_{2,j}}\frac{1}{|x_2-c_{J_1}|^{m+\delta}}|\ave{\Delta^{b'_1}_{I_2}\Delta^{b'_2}_{J_2}g}^{b'_2}_{J_{2,j}}|\,dx)\\
&\lesssim 2^{-i_1\delta/2}\frac{|I_1|^{1/2}}{|K|}|J_1|^{1/2}\ell(J_1)^\delta\|\Delta^{b_1}_{I_1}\Delta^{b_2}_{J_1}f\|_{L^2}\|\Delta^{b'_1}_{I_2}\Delta^{b'_2}_{J_2}g\|_{L^2}|I_2|^{-1/2}|J_2|^{-1/2}\int_{I_2\times J_{2,1}^c}\frac{1}{|x_2-c_{J_1}|^{m+\delta}}\,dx\\
&\lesssim 2^{-i_1\delta/2}\frac{|I_1|^{1/2}|I_2|^{1/2}}{|K|}\frac{|J_1|^{1/2}}{|J_2|^{1/2}}\ell(J_1)^\delta\|\Delta^{b_1}_{I_1}\Delta^{b_2}_{J_1}f\|_{L^2}\|\Delta^{b'_1}_{I_2}\Delta^{b'_2}_{J_2}g\|_{L^2}\dist(J_1,J_{2,1}^c)^{-\delta}\\
&\leq 2^{-i_1\delta/2}\frac{|I_1|^{1/2}|I_2|^{1/2}}{|K|}\frac{|J_1|^{1/2}}{|J_2|^{1/2}}\frac{\ell(J_1)^{\delta/2}}{\ell(J_2)^{\delta/2}}\|\Delta^{b_1}_{I_1}\Delta^{b_2}_{J_1}f\|_{L^2}\|\Delta^{b'_1}_{I_2}\Delta^{b'_2}_{J_2}g\|_{L^2}\\
&=LHS,
\end{split}
\end{displaymath}
where in the third line, $J_{2,j}$ denotes all the children of $J_2$ except $J_{2,1}$, and we used the fact that $\Delta^{b'_1}_{I_2}\Delta^{b'_2}_{J_2}g$ is constant with respect to $x_2$ on each child of $J_2$. And the fourth line follows from the estimate of those averages of $\Delta^{b'_1}_{I_2}\Delta^{b'_2}_{J_2}g$.

Case 2: $2^{-r}\ell(J_2)\leq\ell(J_1)\leq\ell(J_2)$.

Let's further split $I$ into two parts:
\begin{displaymath}
I'=\pair{M_{b'}TM_b\Delta^{b_1}_{I_1}\Delta^{b_2}_{J_1}f}{\chi_{3J_1\cap J_{2,1}^c}(\Delta^{b'_1}_{I_2}\Delta^{b'_2}_{J_2}g-\ave{\Delta^{b'_1}_{I_2}\Delta^{b'_2}_{J_2}g}^{b'_2}_{J_{2,1}})},
\end{displaymath}
\begin{displaymath}
I''=\pair{M_{b'}TM_b\Delta^{b_1}_{I_1}\Delta^{b_2}_{J_1}f}{\chi_{(3J_1)^c\cap J_{2,1}^c}(\Delta^{b'_1}_{I_2}\Delta^{b'_2}_{J_2}g-\ave{\Delta^{b'_1}_{I_2}\Delta^{b'_2}_{J_2}g}^{b'_2}_{J_{2,1}})}.
\end{displaymath}

In $I''$, we still have good separation on both variables, so following from exact the same computation in Case Separated/Separated and the fact that now the size of $J_1, J_2$ are comparable,
\begin{displaymath}
\begin{split}
|I''|&\lesssim 2^{-i_1\delta/2}\frac{|I_1|^{1/2}|I_2|^{1/2}}{|K|}\frac{|J_1|^{1/2}}{|J_2|^{1/2}}\|\Delta^{b_1}_{I_1}\Delta^{b_2}_{J_1}f\|_{L^2}\|\Delta^{b'_1}_{I_2}\Delta^{b'_2}_{J_2}g\|_{L^2}\ell(J_1)^\delta\int_{(3J_1)^c}\frac{1}{|x_2-c_{J_1}|^{m+\delta}}\\
&\lesssim 2^{-i_1\delta/2}\frac{|I_1|^{1/2}|I_2|^{1/2}}{|K|}\frac{|J_1|^{1/2}}{|J_2|^{1/2}}\|\Delta^{b_1}_{I_1}\Delta^{b_2}_{J_1}f\|_{L^2}\|\Delta^{b'_1}_{I_2}\Delta^{b'_2}_{J_2}g\|_{L^2}\\
&\lesssim 2^{-i_1\delta/2}2^{-j_1\delta/2}\frac{|I_1|^{1/2}|I_2|^{1/2}}{|K|}\frac{|J_1|^{1/2}}{|J_2|^{1/2}}\|\Delta^{b_1}_{I_1}\Delta^{b_2}_{J_1}f\|_{L^2}\|\Delta^{b'_1}_{I_2}\Delta^{b'_2}_{J_2}g\|_{L^2}.
\end{split}
\end{displaymath}

Hence, the only thing left to deal with is $I'$. Since now the separation in the second variable is not good enough, we have to use the mixed H\"{o}lder-size condition instead. Again, in the full kernel representation, by cancellation property we can change the kernel to $K(x,y)-K(x,c_{I_1},y_2)$, then
\begin{displaymath}
\begin{split}
|I'|&\lesssim \int_{I_1\times J_1}\int_{I_2\times (3J_1\cap J_{2,1}^c)}\frac{\ell(I_1)^\delta}{|x_1-c_{I_1}|^{n+\delta}}\frac{|\Delta^{b_1}_{I_1}\Delta^{b_2}_{J_1}f(y)|}{|x_2-y_2|^m}(|\Delta^{b'_1}_{I_2}\Delta^{b'_2}_{J_2}g(x)|+|\ave{\Delta^{b'_1}_{I_2}\Delta^{b'_2}_{J_2}g}^{b'_2}_{J_{2,1}}|)\,dxdy\\
&\lesssim 2^{-i_1\delta/2}|K|^{-1}\int_{I_1\times J_1}\int_{I_2\times (3J_1)\cap J_{2,1}^c}\frac{|\Delta^{b_1}_{I_1}\Delta^{b_2}_{J_1}f(y)|}{|x_2-y_2|^m}(|\Delta^{b'_1}_{I_2}\Delta^{b'_2}_{J_2}g(x)|+|\ave{\Delta^{b'_1}_{I_2}\Delta^{b'_2}_{J_2}g}^{b'_2}_{J_{2,1}}|)\,dxdy\\
&\lesssim 2^{-i_1\delta/2}|K|^{-1}\int_{I_1\times J_1}\int_{I_2\times (3J_1\cap J_{2,1}^c)}\frac{|\Delta^{b_1}_{I_1}\Delta^{b_2}_{J_1}f(y)|}{|x_2-y_2|^m}(\sum_{j=2}^{2^m}|\ave{\Delta^{b'_1}_{I_2}\Delta^{b'_2}_{J_2}g}^{b'_2}_{J_{2,j}}|+|\ave{\Delta^{b'_1}_{I_2}\Delta^{b'_2}_{J_2}g}^{b'_2}_{J_{2,1}}|)\,dxdy\\
&\lesssim 2^{-i_1\delta/2}\frac{|I_2|^{1/2}}{|K|}|J_2|^{-1/2}\|\Delta^{b'_1}_{I_2}\Delta^{b'_2}_{J_2}g\|_{L^2}\int_{I_1\times J_1}\int_{3J_1\cap J_{2,1}^c}\frac{|\Delta^{b_1}_{I_1}\Delta^{b_2}_{J_1}f(y)|}{|x_2-y_2|^m}\,dx_2dy\\
&=2^{-i_1\delta/2}\frac{|I_2|^{1/2}}{|K|}|J_2|^{-1/2}\|\Delta^{b'_1}_{I_2}\Delta^{b'_2}_{J_2}g\|_{L^2}\sum_{i=1}^{2^n}\sum_{j=1}^{2^m}\int_{I_{1,i}\times J_{1,j}}\int_{3J_1\cap J_{2,1}^c}\frac{|\ave{\Delta^{b_1}_{I_1}\Delta^{b_2}_{J_1}f(y)}_{I_{1,i}\times J_{1,j}}|}{|x_2-y_2|^m}\,dx_2dy\\
&\lesssim 2^{-i_1\delta/2}\frac{|I_1|^{1/2}|I_2|^{1/2}}{|K|}|J_2|^{-1/2}|J_1|^{-1/2}\|\Delta^{b'_1}_{I_2}\Delta^{b'_2}_{J_2}g\|_{L^2}\|\Delta^{b_1}_{I_1}\Delta^{b_2}_{J_1}f\|_{L^2}\int_{J_1}\int_{3J_1\setminus J_1}\frac{1}{|x_2-y_2|^m}\,dx_2dy_2\\
&\lesssim 2^{-i_1\delta/2}\frac{|I_1|^{1/2}|I_2|^{1/2}}{|K|}\frac{|J_1|^{1/2}}{|J_2|^{1/2}}\|\Delta^{b_1}_{I_1}\Delta^{b_2}_{J_1}f\|_{L^2}\|\Delta^{b'_1}_{I_2}\Delta^{b'_2}_{J_2}g\|_{L^2}\\
&\lesssim 2^{-i_1\delta/2}2^{-j_1\delta/2}\frac{|I_1|^{1/2}|I_2|^{1/2}}{|K|}\frac{|J_1|^{1/2}}{|J_2|^{1/2}}\|\Delta^{b_1}_{I_1}\Delta^{b_2}_{J_1}f\|_{L^2}\|\Delta^{b'_1}_{I_2}\Delta^{b'_2}_{J_2}g\|_{L^2}.
\end{split}
\end{displaymath}
In the above, the fifth line is because $\Delta^{b_1}_{I_1}\Delta^{b_2}_{J_1}f$ is a constant on each child of $I_1\times J_1$, and the last line follows from the fact that the size of $J_1, J_2$ are comparable. This completes the proof of the proposition.
\end{proof}

\subsection*{Part $\sigma_{\out/\inside}''$}
For the part $\sigma_{\out/\inside}''$, we are going to rewrite it into a form containing a partial $b$-adapted paraproduct. Rewrite
\begin{displaymath}
\sigma_{\out/\inside}''=\sum_{i_2=1}^\infty\sum_{i_1=i_2}^\infty\sum_{K}\sum_{\substack{\dist(I_1,I_2)>\ell(I_1)^{\gamma_n}\ell(I_2)^{1-\gamma_n}\\I_1\vee I_2=K}}^{(i_1,i_2)}\sum_{J_1\subsetneq J_2} II,
\end{displaymath}
and first look at the innermost sum.
\begin{displaymath}
\begin{split}
&\sum_{J_1\subsetneq J_2} \pair{M_{b'}TM_b\Delta^{b_1}_{I_1}\Delta^{b_2}_{J_1}f}{\ave{\Delta^{b'_1}_{I_2}\Delta^{b'_2}_{J_2}g}^{b'_2}_{J_{2,1}}(x_1)\otimes 1(x_2)}\\
&=\sum_{J_1\subsetneq J_2}\pair{\pair{M_{b'}TM_b\Delta^{b_1}_{I_1}\Delta^{b_2}_{J_1}f}{1}_2}{\ave{\Delta^{b'_1}_{I_2}\Delta^{b'_2}_{J_2}g}^{b'_2}_{J_1}}_1\\
&=\sum_{J_1}\pair{\pair{M_{b'}TM_b\Delta^{b_1}_{I_1}\Delta^{b_2}_{J_1}f}{1}_2}{\ave{\sum_{J_2\supsetneq J_1}\Delta^{b'_1}_{I_2}\Delta^{b'_2}_{J_2}g}^{b'_2}_{J_1}}_1\\
&=\sum_{J_1}\pair{\pair{M_{b'}TM_b\Delta^{b_1}_{I_1}\Delta^{b_2}_{J_1}f}{1}_2}{\ave{\Delta^{b'_1}_{I_2}g}^{b'_2}_{J_1}}_1\\
&=\sum_V\pair{M_{b'}TM_b\Delta^{b_1}_{I_1}\Delta^{b_2}_Vf}{\ave{\Delta^{b'_1}_{I_2}g}^{b'_2}_V\otimes 1}\\
&=\pair{\Delta^{b_1}_{I_1}f}{\sum_VM_b\Delta^{b_1}_{I_1}\Delta^{b_2}_VT^*(b'_1\ave{\Delta^{b'_1}_{I_2}g}^{b'_2}_V\otimes b'_2)}.
\end{split}
\end{displaymath}

Notice that $\Delta^{b_1}_{I_1}f,\Delta^{b'_1}_{I_2}g$ are constant with respect to $x_1$ on each child of $I_1, I_2$, respectively. If we decompose the above pairing into parts that are restricted on children of $I_1,I_2$, then
\begin{displaymath}
\begin{split}
\sum_{J_1\subsetneq J_2}II&=\sum_{t=1}^{2^n}\sum_{k=1}^{2^m}\pair{\chi_{I_{1,t}}\Delta^{b_1}_{I_1}f}{\sum_V \ave{\Delta^{b'_1}_{I_2}g|_{I_{2,k}}}^{b'_2}_VM_b\Delta^{b_1}_{I_1}\Delta^{b_2}_VT^*(\chi_{I_{2,k}}b'_1\otimes b'_2)}\\
&=\sum_{t=1}^{2^n}\sum_{k=1}^{2^m}\pair{\chi_{I_{1,t}}\Delta^{b_1}_{I_1}f}{b_1\otimes \pi^{b'_2,b_2}_{h_{I_{1,t},I_{2,k}}}(\Delta^{b'_1}_{I_2}g|_{I_{2,k}})},
\end{split}
\end{displaymath}
where $h_{I_{1,t},I_{2,k}}(x_2)=(\Delta^{b_1}_{I_1}T^*(\chi_{I_{2,k}}b'_1\otimes b'_2))|_{I_{1,t}}$, and the following lemma guarantees that the partial paraproduct is well defined.

\begin{lem}\label{hBMO}
$h_{I_{1,t},I_{2,k}}$ is in $BMO(\R^m)$, and satisfies
\begin{displaymath}
\|h_{I_{1,t},I_{2,k}}\|_{BMO}\lesssim 2^{-i_1\delta/2}|K|^{-1}|I_2|.
\end{displaymath}
\end{lem}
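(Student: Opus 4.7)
The plan is to verify the $BMO$ bound by testing $h_{I_{1,t},I_{2,k}}$ against zero-mean, $V$-adapted functions $u_V$ with $\|u_V\|_\infty\leq 1$ for arbitrary cubes $V\subset\R^m$, this being the standard weak characterization of $BMO$. Unfolding the definition of $h_{I_{1,t},I_{2,k}}$ (that is, writing out the two $b_1$-averages comprising $\Delta^{b_1}_{I_1}$ and restricting the result to $I_{1,t}$) gives
\begin{displaymath}
\int_{\R^m}u_V(x_2)\, h_{I_{1,t},I_{2,k}}(x_2)\,dx_2=\pair{T(\phi\otimes u_V)}{\chi_{I_{2,k}}b'_1\otimes b'_2},
\end{displaymath}
where
\begin{displaymath}
\phi(y_1):=\frac{\chi_{I_{1,t}}(y_1)b_1(y_1)}{\int_{I_{1,t}}b_1}-\frac{\chi_{I_1}(y_1)b_1(y_1)}{\int_{I_1}b_1}
\end{displaymath}
is supported on $I_1$, satisfies $\int\phi=0$ by construction, and has $\|\phi\|_{L^1}\lesssim 1$ by the pseudo-accretivity of $b_1$. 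It therefore suffices to bound the right-hand side by $2^{-i_1\delta/2}|K|^{-1}|I_2|\,|V|$.

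Since $I_1$ and $I_{2,k}$ are separated with $\dist(I_1,I_{2,k})\gtrsim\ell(I_1)^{\gamma_n}\ell(K)^{1-\gamma_n}$ (from the goodness of $I_1$ together with $I_1\vee I_2=K$), the two tensors are already separated in the $x_1$ variable. To handle the fact that $b'_2$ is not compactly supported, I decompose $b'_2=b'_2\chi_{3V}+b'_2\chi_{(3V)^c}$ and treat the two pieces separately; the pairing against the $(3V)^c$ piece is justified by the truncation-plus-Lebesgue-dominated-convergence device used in Part four of the $BMO$ discussion in Section \ref{assump}.

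For the $\chi_{3V}$ piece I split $\chi_{3V}$ into $O(1)$ translates of $\chi_V$ so that the assumed partial-kernel configuration $(u_Vb_2^{-1},\chi_V)$ (with constant $\lesssim|V|$) applies, and invoke the partial kernel representation in $x_1$. Using $\int\phi=0$ to subtract $K_{\cdot,\cdot}(x_1,c_{I_1})$, the H\"{o}lder estimate (\ref{parH2}) together with the identity $\gamma_n(n+\delta)=\delta/2$ yields
\begin{displaymath}
|K_{f_2,\chi_V}(x_1,y_1)-K_{f_2,\chi_V}(x_1,c_{I_1})|\lesssim|V|\frac{\ell(I_1)^\delta}{\dist(I_1,I_2)^{n+\delta}}\lesssim 2^{-i_1\delta/2}|K|^{-1}|V|,
\end{displaymath}
which integrated against $|\phi|\otimes\chi_{I_{2,k}}$ produces the target bound. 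For the $\chi_{(3V)^c}$ piece both tensors are separated in both variables, so I apply the full kernel representation with the double cancellation $\int\phi=\int u_V=0$, replacing $K(x,y)$ by the four-point difference $K(x,y)-K(x,y_1,c_V)-K(x,c_{I_1},y_2)+K(x,c_{I_1},c_V)$ and invoking the full H\"{o}lder condition; the $x_1$ factor again contributes $2^{-i_1\delta/2}|K|^{-1}$, while the $x_2$-tail $\ell(V)^\delta|x_2-c_V|^{-m-\delta}$ integrates to an $O(1)$ constant over $(3V)^c$ and supplies the remaining $|V|$ when paired with $u_V$. The main technical obstacle I anticipate is in the near part, namely the bookkeeping required to reduce the pair $(u_Vb_2^{-1},\chi_{3V})$ to the explicitly assumed partial-kernel configurations via a $3^m$-fold finite-additivity argument without disturbing the cancellation of $u_V$.
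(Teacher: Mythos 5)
Your proposal takes essentially the same route as the paper: rewrite the pairing against a zero-mean $V$-adapted test function $a$ as $\pair{T(\phi\otimes a)}{\chi_{I_{2,k}}b_1'\otimes b_2'}$ where $\phi=\Delta^{b_1*}_{I_1}(\chi_{I_{1,t}}/|I_{1,t}|)$ is supported on $I_1$ with $\int\phi=0$ and $\|\phi\|_{L^1}\lesssim 1$, split $b_2'$ into a $\chi_{3V}$ piece and a $\chi_{(3V)^c}$ piece, then use the partial kernel representation with the H\"older bound (\ref{parH2}) for the near piece and the full kernel representation with the four-point H\"older difference for the far piece; the exponent arithmetic $\gamma_n(n+\delta)=\delta/2$ gives the factor $2^{-i_1\delta/2}|K|^{-1}|I_2|$ exactly as you describe. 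The one place you flag as a technical obstacle, however, is handled by a cleaner observation than the one you propose, and the one you propose doesn't actually close: splitting $\chi_{3V}$ into $3^m$ translates $\chi_{V'}$ produces configurations $(u_Vb_2^{-1},\chi_{V'})$ with $V'\neq V$, which are \emph{not} among the assumed partial-kernel pairs (those require the same cube in both slots). The paper avoids this altogether by noting that $a$ is automatically $3V$-adapted (it is supported in $V\subset 3V$, bounded by $1$, zero-mean), so the pair $(ab_2^{-1},\chi_{3V})$ is literally of the assumed form $(u_{W}b_2^{-1},\chi_{W})$ with $W=3V$, and the hypothesis gives $C(ab_2^{-1},\chi_{3V})\lesssim|3V|\approx|V|$ with no decomposition at all. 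With that substitution your argument matches the paper's.
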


We will assume the lemma to be true for the moment and prove it at the end of this section. The above pairing can be further rewritten as:
\begin{displaymath}
\begin{split}
&\sum_{t=1}^{2^n}\sum_{k=1}^{2^m}(\int_{I_{1,t}}b_1\,dx_1)\pair{\Delta^{b_1}_{I_1}f|_{I_{1,t}}}{\pi^{b'_2,b_2}_{h_{I_{1,t},I_{2,k}}}(\Delta^{b'_1}_{I_2}g|_{I_{2,k}})}_2\\
&=\sum_{t=1}^{2^n}\sum_{k=1}^{2^m}(\int_{I_{1,t}}b_1\,dx_1)\pair{\pi^{b'_2,b_2*}_{h_{I_{1,t},I_{2,k}}}(\Delta^{b_1}_{I_1}f|_{I_{1,t}})}{\Delta^{b'_1}_{I_2}g|_{I_{2,k}}}_2\\
&=\sum_{t=1}^{2^n}\sum_{k=1}^{2^m}(\int_{I_{1,t}}b_1\,dx_1)\pair{\frac{\chi_{I_{2,k}}}{|I_{2,k}|}\otimes \pi^{b'_2,b_2*}_{h_{I_{1,t},I_{2,k}}}(\Delta^{b_1}_{I_1}f|_{I_{1,t}})}{\Delta^{b'_1}_{I_2}g}\\
&=\sum_{t=1}^{2^n}\sum_{k=1}^{2^m}(\int_{I_{1,t}}b_1\,dx_1)\pair{\Delta^{b'_1*}_{I_2}(\frac{\chi_{I_{2,k}}}{|I_{2,k}|})\otimes \pi^{b'_2,b_2*}_{h_{I_{1,t},I_{2,k}}}(\Delta^{b_1}_{I_1}f|_{I_{1,t}})}{g}\\
&=\sum_{t=1}^{2^n}\sum_{k=1}^{2^m}(\int_{I_{1,t}}b_1\,dx_1)\pair{b'_1\Delta^{b'_1}_{I_2}(\frac{b^{'-1}_1\chi_{I_{2,k}}}{|I_{2,k}|})\otimes \pi^{b'_2,b_2*}_{h_{I_{1,t},I_{2,k}}}(\Delta^{b_1}_{I_1}f|_{I_{1,t}})}{g}.
\end{split}
\end{displaymath}

Then,
\begin{displaymath}
\begin{split}
&|\sigma_{\out/\inside}''|\\
&=|\sum_{i_2=1}^\infty\sum_{i_1=i_2}^\infty\sum_{K}\sum_{\substack{\dist(I_1,I_2)>\ell(I_1)^{\gamma_n}\ell(I_2)^{1-\gamma_n}\\I_1\vee I_2=K}}^{(i_1,i_2)}\sum_{t=1}^{2^n}\sum_{k=1}^{2^m}(\int_{I_{1,t}}b_1)\pair{b'_1\Delta^{b'_1}_{I_2}(\frac{b^{'-1}_1\chi_{I_{2,k}}}{|I_{2,k}|})\otimes \pi^{b'_2,b_2*}_{h_{I_{1,t},I_{2,k}}}(\Delta^{b_1}_{I_1}f|_{I_{1,t}})}{g}|\\
&\lesssim \sum_{i_2=1}^\infty\sum_{i_1=i_2}^\infty\sum_{t=1}^{2^n}\sum_{k=1}^{2^m}\|\sum_K\sum_{\substack{\dist(I_1,I_2)>\ell(I_1)^{\gamma_n}\ell(I_2)^{1-\gamma_n}\\I_1\vee I_2=K}}^{(i_1,i_2)}(\int_{I_{1,t}}b_1)\Delta^{b'_1}_{I_2}(\frac{b^{'-1}_1\chi_{I_{2,k}}}{|I_{2,k}|})\otimes \pi^{b'_2,b_2*}_{h_{I_{1,t},I_{2,k}}}(\Delta^{b_1}_{I_1}f|_{I_{1,t}})\|_{L^2}\|g\|_{L^2}.
\end{split}
\end{displaymath}

We claim that for any $t,k$,
\begin{equation}\label{parpara}
\|\sum_K\sum_{\substack{\dist(I_1,I_2)>\ell(I_1)^{\gamma_n}\ell(I_2)^{1-\gamma_n}\\I_1\vee I_2=K}}^{(i_1,i_2)}(\int_{I_{1,t}}b_1)\Delta^{b'_1}_{I_2}(\frac{b^{'-1}_1\chi_{I_{2,k}}}{|I_{2,k}|})\otimes \pi^{b'_2,b_2*}_{h_{I_{1,t},I_{2,k}}}(\Delta^{b_1}_{I_1}f|_{I_{1,t}})\|_{L^2}\lesssim 2^{-i_1\delta/2}\|f\|_{L^2}.
\end{equation}

To see this, first observe that since $b'$ is pseudo-accretive, for any $L^2$ function $h$,
\begin{displaymath}
\|h\|_{L^2}\approx \sup_{\|g\|_{L^2}\leq 1}\pair{h}{g}_{b'}=\sup_{\|g\|_{L^2}\leq 1}\int hgb'.
\end{displaymath}
And we have
\begin{displaymath}
\pair{\Delta^{b'_1}_{I_2}h}{g}_{b'}=\pair{h}{\Delta^{b'_1}_{I_2}g}_{b'}.
\end{displaymath}

Hence by linearity, $LHS$ of (\ref{parpara}) is comparable to 
\begin{displaymath}
\begin{split}
&\sup_{\|g\|_{L^2}\leq 1}\sum_K\sum_{\substack{\dist(I_1,I_2)>\ell(I_1)^{\gamma_n}\ell(I_2)^{1-\gamma_n}\\I_1\vee I_2=K}}^{(i_1,i_2)}(\int_{I_{1,t}}b_1)\pair{\Delta^{b'_1}_{I_2}(\frac{b^{'-1}_1\chi_{I_{2,k}}}{|I_{2,k}|})\otimes \pi^{b'_2,b_2*}_{h_{I_{1,t},I_{2,k}}}(\Delta^{b_1}_{I_1}f|_{I_{1,t}})}{g}_{b'}\\
&\lesssim \sup_{\|g\|_{L^2}\leq 1}\sum_K\sum_{I_1,I_1\subset K}^{(i_1,i_2)}|I_1|\|\Delta^{b'_1}_{I_2}(\frac{b^{'-1}_1\chi_{I_{2,k}}}{|I_{2,k}|})\otimes \pi^{b'_2,b_2*}_{h_{I_{1,t},I_{2,k}}}(\Delta^{b_1}_{I_1}f|_{I_{1,t}})\|_{L^2}\|\Delta^{b'_1}_{I_2}g\|_{L^2}.
\end{split}
\end{displaymath}
Since $\|\Delta^{b'_1}_{I_2}(\frac{b^{'-1}_1\chi_{I_{2,k}}}{|I_{2,k}|})\|_{L^2(\R^n)}\lesssim (\int_{I_2}\frac{1}{|I_2|^2})^{1/2}=|I_2|^{-1/2}$, and by Lemma \ref{hBMO}, $\|h_{I_{1,t},I_{2,k}}\|_{BMO}\lesssim 2^{-i_1\delta/2}|K|^{-1}|I_2|$, the $RHS$ of the above inequality
\begin{displaymath}
\begin{split}
&\lesssim 2^{-i_1\delta/2}\sup_{\|g\|_{L^2}\leq 1}\sum_K\sum_{I_1,I_2\subset K}^{(i_1,i_2)}|I_1||I_2|^{1/2}|K|^{-1}\|\Delta^{b_1}_{I_1}f|_{I_{1,t}}\|_{L^2(\R^m)}\|\Delta^{b'_1}_{I_2}g\|_{L^2}\\
&=2^{-i_1\delta/2}\sup_{\|g\|_{L^2}\leq 1}\sum_K\sum_{I_1,I_2\subset K}^{(i_1,i_2)}|I_1||I_2|^{1/2}|K|^{-1}(\frac{1}{|I_{1,t}|}\int_{I_{1,t}}\int_{\R^m}|\Delta^{b_1}_{I_1}f|^2)^{1/2}\|\Delta^{b'_1}_{I_2}g\|_{L^2}\\
&\lesssim 2^{-i_1\delta/2}\sup_{\|g\|_{L^2}\leq 1}\sum_K\sum_{I_1,I_2\subset K}^{(i_1,i_2)}|I_1|^{1/2}|I_2|^{1/2}|K|^{-1}\|\Delta^{b_1}_{I_1}f\|_{L^2}\|\Delta^{b'_1}_{I_2}g\|_{L^2}\\
&\lesssim 2^{-i_1\delta/2}\|f\|_{L^2},
\end{split}
\end{displaymath}
where the last step follows from the first partial control lemma we stated in the beginning.

Then, to complete this section, we give a proof of Lemma \ref{hBMO}.

\begin{proof}(of Lemma \ref{hBMO})
It suffices to show that for any cube $V\subset \R^m$, and any function $a$ satisfying $\mbox{spt}a\subset V,\,|a|\leq 1,\,\int a=0$, there holds
\begin{displaymath}
\pair{h_{I_{1,t},I_{2,k}}}{a}_2\lesssim 2^{-i_1\delta/2}|K|^{-1}|I_2||V|.
\end{displaymath}

To see this, 
\begin{displaymath}
\begin{split}
LHS&=\pair{(\Delta^{b_1}_{I_1}T^*(\chi_{I_{2,k}}b'_1\otimes b'_2))|_{I_{1,t}}}{a}_2\\
&=\pair{\Delta^{b_1}_{I_1}T^*(\chi_{I_{2,k}}b'_1\otimes b'_2)}{\frac{\chi_{I_{1,t}}}{|I_{1,t}|}\otimes a}\\
&=\pair{\chi_{I_{2,k}}b'_1\otimes b'_2}{T(\Delta^{b_1*}_{I_1}(\frac{\chi_{I_{1,t}}}{|I_{1,t}|})\otimes a)}\\
&=\pair{\chi_{I_{2,k}}b'_1\otimes \chi_{3V}b'_2}{T(\Delta^{b_1*}_{I_1}(\frac{\chi_{I_{1,t}}}{|I_{1,t}|})\otimes a)}+\pair{\chi_{I_{2,k}}b'_1\otimes \chi_{(3V)^c}b'_2}{T(\Delta^{b_1*}_{I_1}(\frac{\chi_{I_{1,t}}}{|I_{1,t}|})\otimes a)}\\
&:=(1)+(2).
\end{split}
\end{displaymath}

For $(2)$, since the two functions in the pairing have good separation on both variables, and $\int a=\int \Delta^{b_1*}_{I_1}(\frac{\chi_{I_{1,t}}}{|I_{1,t}|})=0$, use full kernel representation and change the kernel to
\begin{displaymath}
K(x,y)-K(x,y_1,c_V)-K(x,c_{I_1},y_2)+K(x,c_{I_1},c_V).
\end{displaymath}
Then, by H\"{o}lder condition,
\begin{displaymath}
\begin{split}
(2)&\lesssim \ell(I_1)^\delta\ell(V)^\delta \int_{I_1\times V}\int_{I_{2,k}\times (3V)^c}\frac{1}{|x_1-c_{I_1}|^{n+\delta}}\frac{1}{|x_2-c_V|^{m+\delta}}|\Delta^{b_1*}_{I_1}(\frac{\chi_{I_{1,t}}}{|I_{1,t}|})||a|\,dxdy\\
&\lesssim 2^{-i_1\delta/2}|K|^{-1}\ell(V)^\delta |V||I_2|\int_{(3V)^c}\frac{1}{|x_2-c_V|^{m+\delta}}\,dx_2\\
&\lesssim 2^{-i_1\delta/2}|K|^{-1}|V||I_2|.
\end{split}
\end{displaymath}

For $(1)$, there is good separation on only one variable, so we need to use the partial kernel representation.
\begin{displaymath}
\begin{split}
(1)&=\int_{I_1}\int_{I_{2,k}}K_{b_2^{-1}a,\chi_{3V}}(x_1,y_1)\Delta^{b_1*}_{I_1}(\frac{\chi_{I_{1,t}}}{|I_{1,t}|})(y_1)b'_1(x_1)\,dx_1dy_1\\
&=\int_{I_1}\int_{I_{2,k}}(K_{b_2^{-1}a,\chi_{3V}}(x_1,y_1)-K_{b_2^{-1}a,\chi_{3V}}(x_1,c_{I_1}))\Delta^{b_1*}_{I_1}(\frac{\chi_{I_{1,t}}}{|I_{1,t}|})(y_1)b'_1(x_1)\,dx_1dy_1\\
&\lesssim C(b_2^{-1}a,\chi_{3V})(\frac{\ell(I_1)}{\ell(K)})^{\delta/2}|K|^{-1}|I_2|\int_{I_1}|\Delta^{b_1*}_{I_1}(\frac{\chi_{I_{1,t}}}{|I_{1,t}|})|\\
&\lesssim 2^{-i_1\delta/2}|V||K|^{-1}|I_2|.
\end{split}
\end{displaymath}
In the last step of the above, we used the partial C-Z assumption that $C(b_2^{-1}a,\chi_{3V})\lesssim |V|$.
\end{proof}

\section{Separated/Equal: $\sigma_{\out/=}$}
In this part, 
\begin{displaymath}
\sigma_{\out/=}=\sum_{i_2=0}^\infty\sum_{i_1=i_2}^\infty\sum_{K}\sum_{\substack{\dist(I_1,I_2)>\ell(I_1)^{\gamma_n}\ell(I_2)^{1-\gamma_n}\\I_1\vee I_2=K}}^{(i_1,i_2)}\sum_V\pair{M_{b'}TM_b\Delta^{b_1}_{I_1}\Delta^{b_2}_Vf}{\Delta^{b'_1}_{I_2}\Delta^{b'_2}_{V}g}.
\end{displaymath}
By the full control lemma, it suffices to prove the following proposition.
\begin{prop}
\begin{displaymath}
|\pair{M_{b'}TM_b\Delta^{b_1}_{I_1}\Delta^{b_2}_Vf}{\Delta^{b'_1}_{I_2}\Delta^{b'_2}_{V}g}|\lesssim 2^{-i_1\delta/2}\frac{|I_1|^{1/2}|I_2|^{1/2}}{|K|}\|\Delta^{b_1}_{I_1}\Delta^{b_2}_Vf\|_{L^2}\|\Delta^{b'_1}_{I_2}\Delta^{b'_2}_Vg\|_{L^2}.
\end{displaymath}
\end{prop}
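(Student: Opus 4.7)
\emph{Plan.} The approach is to exploit the first-variable separation via the partial kernel representation and extract a Hölder gain from the $\Delta^{b_1}_{I_1}$ cancellation. Since $I_1\cap I_2=\emptyset$, decompose along children of $I_1,I_2$ in the first variable:
\begin{displaymath}
\Delta^{b_1}_{I_1}\Delta^{b_2}_V f=\sum_{i=1}^{2^n}\chi_{I_{1,i}}\otimes f^{(i)}_2,\qquad \Delta^{b'_1}_{I_2}\Delta^{b'_2}_V g=\sum_{k=1}^{2^n}\chi_{I_{2,k}}\otimes g^{(k)}_2,
\end{displaymath}
where each $f^{(i)}_2,g^{(k)}_2$ is $V$-supported, piecewise constant on children of $V$, and (by the $\Delta^{b_2}_V$ and $\Delta^{b'_2}_V$ cancellations) satisfies $\int f^{(i)}_2 b_2=0=\int g^{(k)}_2 b'_2$. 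Normalizing, $f^{(i)}_2=M_i u^{(i)}_V b_2^{-1}$ and $g^{(k)}_2=N_k u^{(k)}_V b'^{-1}_2$, with $u^{(i)}_V,u^{(k)}_V$ being $V$-adapted zero-mean functions of $L^\infty$-norm $\leq 1$, and $M_i\lesssim|V|^{-1/2}\|f^{(i)}_2\|_{L^2(V)}$, symmetrically for $N_k$.

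Since $I_{1,i}\cap I_{2,k}=\emptyset$ for all $i,k$, the partial kernel representation applies termwise, yielding
\begin{displaymath}
\pair{M_{b'}TM_b\Delta^{b_1}_{I_1}\Delta^{b_2}_V f}{\Delta^{b'_1}_{I_2}\Delta^{b'_2}_V g}=\sum_{i,k}\int_{I_{2,k}}\int_{I_{1,i}}K_{f^{(i)}_2,g^{(k)}_2}(x_1,y_1)b_1(y_1)b'_1(x_1)\,dy_1dx_1.
\end{displaymath}
The $\Delta^{b_1}_{I_1}$-cancellation reads $\sum_i(\int_{I_{1,i}}b_1)f^{(i)}_2\equiv 0$ as a function of $y_2$; combined with the bilinearity of $K_{\cdot,g^{(k)}_2}$ in its first subscript, this forces $\sum_i(\int_{I_{1,i}}b_1)K_{f^{(i)}_2,g^{(k)}_2}(x_1,c_{I_1})=0$, so we may subtract $K_{f^{(i)}_2,g^{(k)}_2}(x_1,c_{I_1})$ from each summand without changing the total. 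Goodness of $I_1$ and separation give $|y_1-c_{I_1}|\leq\ell(I_1)/2\lesssim\dist(I_1,I_2)/2\leq|x_1-y_1|/2$, so the partial Hölder condition applies. Combining this with the partial C-Z bound for the three allowed pairs together with the $u b_2^{-1},u' b'^{-1}_2$ structure of $f^{(i)}_2,g^{(k)}_2$ yields
\begin{displaymath}
|K_{f^{(i)}_2,g^{(k)}_2}(x_1,y_1)-K_{f^{(i)}_2,g^{(k)}_2}(x_1,c_{I_1})|\lesssim M_iN_k|V|\cdot\frac{\ell(I_1)^\delta}{|x_1-y_1|^{n+\delta}}.
\end{displaymath}

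Finally, $\dist(I_1,I_2)\gtrsim\ell(I_1)^{\gamma_n}\ell(K)^{1-\gamma_n}$ together with $(n+\delta)\gamma_n=\delta/2$ shows that integrating $\ell(I_1)^\delta|x_1-y_1|^{-n-\delta}$ over $I_{1,i}\times I_{2,k}$ contributes $2^{-i_1\delta/2}|I_{1,i}||I_{2,k}|/|K|$. Summing over $i,k$ and applying Cauchy--Schwarz with $\|\Delta^{b_1}_{I_1}\Delta^{b_2}_V f\|_{L^2}^2=\sum_i|I_{1,i}|\|f^{(i)}_2\|_{L^2(V)}^2$ (and its $g$-analogue) produces $\sum_i|I_{1,i}|M_i\lesssim|V|^{-1/2}|I_1|^{1/2}\|\Delta^{b_1}_{I_1}\Delta^{b_2}_V f\|_{L^2}$ and symmetrically for $k$; multiplying by the $|V|/|K|$ factor delivers the claimed inequality. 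The main obstacle is the bound $C(f^{(i)}_2,g^{(k)}_2)\lesssim M_iN_k|V|$ for the doubly-oscillating pair, which is not among the three pairs explicitly controlled in the weakened partial C-Z hypothesis; it must either invoke the unweakened partial C-Z or be reduced to the three allowed forms through a finer splitting that exploits the zero-mean properties of $u^{(i)}_V$ and $u^{(k)}_V$.
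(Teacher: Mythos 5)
Your plan to exploit the first-variable separation through the partial kernel representation, the $\Delta^{b_1}_{I_1}$ cancellation, and the partial H\"older condition in $y_1$ is the right idea and matches the paper in spirit. The gap is exactly what you flag at the end: having split only along children of $I_1,I_2$, the kernel arguments $(f^{(i)}_2,g^{(k)}_2)$ both carry the $V$-oscillation, so you land on the doubly-oscillating pair $(u_Vb_2^{-1},u'_Vb'^{-1}_2)$. Neither the weakened hypothesis, which controls only $C(\chi_V,\chi_V)$, $C(\chi_V,u_Vb'^{-1}_2)$, $C(u_Vb_2^{-1},\chi_V)$ by $|V|$, nor the full partial Calder\'on--Zygmund hypothesis gives a quantitative $|V|$-scale bound on $C(u_Vb_2^{-1},u'_Vb'^{-1}_2)$; and the zero-mean properties of $u^{(i)}_V,u^{(k)}_V$ do not by themselves reduce this constant to the three controlled ones, so the repairs you propose do not close the gap.

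The missing idea is to split along children of $V$ rather than along children of $I_1,I_2$. For the diagonal terms $V'=V''$ the restriction to $V'$ collapses the second-variable dependence onto a single child, so the partial kernel is exactly $K_{\chi_{V'},\chi_{V'}}$ with $C(\chi_{V'},\chi_{V'})\lesssim|V'|$, one of the controlled pairs; the cancellation $\int_{I_1}\Delta^{b_1}_{I_1}(\cdot)\,b_1=0$ is used over all of $I_1$ at once (no $I$-split is needed), and the partial H\"older estimate together with $\dist(I_1,I_2)\gtrsim\ell(I_1)^{\gamma_n}\ell(K)^{1-\gamma_n}$ and $(n+\delta)\gamma_n=\delta/2$ yields the $2^{-i_1\delta/2}$ factor. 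For the off-diagonal terms $V'\neq V''$ the second variable is separated as well, so the full kernel representation applies and the mixed H\"older-size condition (H\"older in $y_1$, size in $x_2-y_2$) gives the same gain, with $\int_{V'\times V''}|x_2-y_2|^{-m}\,dx_2\,dy_2\lesssim|V|$ since the two children sit at comparable scale. One could salvage your argument by performing this $V$-children split inside each $K_{f^{(i)}_2,g^{(k)}_2}$, but at that point your initial split along children of $I_1,I_2$ is redundant and the proof reduces to the paper's.
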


\begin{proof}
\begin{displaymath}
\begin{split}
|\pair{M_{b'}TM_b\Delta^{b_1}_{I_1}\Delta^{b_2}_Vf}{\Delta^{b'_1}_{I_2}\Delta^{b'_2}_{V}g}|&\leq \sum_{\substack{V',V''\in\children{(V)}\\V'\neq V''}}|\pair{M_{b'}TM_b(\chi_{V'}\Delta^{b_1}_{I_1}\Delta^{b_2}_Vf)}{\chi_{V''}\Delta^{b'_1}_{I_2}\Delta^{b'_2}_{V}g}|\\
&\qquad +\sum_{V'\in\children{(V)}}|\pair{M_{b'}TM_b(\chi_{V'}\Delta^{b_1}_{I_1}\Delta^{b_2}_Vf)}{\chi_{V'}\Delta^{b'_1}_{I_2}\Delta^{b'_2}_{V}g}|\\
&:=(1)+(2).
\end{split}
\end{displaymath}

For $(2)$, the partial kernel representation gives
\begin{displaymath}
\begin{split}
(2)&=\sum_{V'\in\children{(V)}}|\pair{M_{b'}TM_b(\Delta^{b_1}_{I_1}\Delta^{b_2}_{V}f|_{V'}\otimes \chi_{V'})}{\Delta^{b'_1}_{I_2}\Delta^{b'_2}_Vg|_{V'}\otimes \chi_{V'}}|\\
&=\sum_{V'\in\children{(V)}}|\int_{I_1}\int_{I_2}(K_{\chi_{V'},\chi_{V'}}(x_1,y_1)-K_{\chi_{V'},\chi_{V'}}(x_1,c_{I_1}))\cdot\\
&\qquad \qquad \Delta^{b_1}_{I_1}\Delta^{b_2}_Vf|_{V'}(y_1)\Delta^{b'_1}_{I_2}\Delta^{b'_2}_Vg|_{V'}(x_1)b_1(y_1)b'_1(x_1)\,dx_1dy_1|\\
&\lesssim \sum_{V'\in\children{(V)}}C(\chi_{V'},\chi_{V'})2^{-i_1\delta/2}|K|^{-1}|V'|^{-2}(\int_{I_1\times V'}|\Delta^{b_1}_{I_1}\Delta^{b_2}_Vf|)(\int_{I_2\times V'}|\Delta^{b'_1}_{I_2}\Delta^{b'_2}_Vg|)\\
&\lesssim 2^{-i_1\delta/2}|K|^{-1}|V|^{-1}\|\Delta^{b_1}_{I_1}\Delta^{b_2}_Vf\|_{L^2}\|\Delta^{b'_1}_{I_2}\Delta^{b'_2}_Vg\|_{L^2}|I_1|^{1/2}|I_2|^{1/2}\sum_{V'\in\children{(V)}}|V'|\\
&=2^{-i_1\delta/2}\frac{|I_1|^{1/2}|I_2|^{1/2}}{|K|}\|\Delta^{b_1}_{I_1}\Delta^{b_2}_Vf\|_{L^2}\|\Delta^{b'_1}_{I_2}\Delta^{b'_2}_Vg\|_{L^2}.
\end{split}
\end{displaymath}

For $(1)$, the full kernel representation and the mixed H\"{o}lder-size condition give
\begin{displaymath}
\begin{split}
(1)&=\sum_{\substack{V',V''\in\children{(V)}\\V'\neq V''}}|\int_{I_1\times V'}\int_{I_2\times V''}(K(x,y)-K(x,c_{I_1},y_2))\Delta^{b_1}_{I_1}\Delta^{b_2}_Vf(y_1)\Delta^{b'_1}_{I_2}\Delta^{b'_2}_Vg(x_1)b(y)b'(x)\,dxdy|\\
&\lesssim \sum_{\substack{V',V''\in\children{(V)}\\V'\neq V''}} 2^{-i_1\delta/2}|K|^{-1}(\frac{1}{|V'|}\int_{I_1\times V'}|\Delta^{b_1}_{I_1}\Delta^{b_2}_Vf|)(\frac{1}{|V''|}\int_{I_2\times V''}|\Delta^{b'_1}_{I_2}\Delta^{b'_2}_Vg|)(\int_{V'\times V''}\frac{1}{|x_2-y_2|^m})\\
&\lesssim 2^{-i_1\delta/2}\frac{|I_1|^{1/2}|I_2|^{1/2}}{|K|}\|\Delta^{b_1}_{I_1}\Delta^{b_2}_Vf\|_{L^2}\|\Delta^{b'_1}_{I_2}\Delta^{b'_2}_Vg\|_{L^2}\sum_{\substack{V',V''\in\children{(V)}\\V'\neq V''}}|V'|^{-1/2}|V''|^{-1/2}|V|\\
&\lesssim 2^{-i_1\delta/2}\frac{|I_1|^{1/2}|I_2|^{1/2}}{|K|}\|\Delta^{b_1}_{I_1}\Delta^{b_2}_Vf\|_{L^2}\|\Delta^{b'_1}_{I_2}\Delta^{b'_2}_Vg\|_{L^2},
\end{split}
\end{displaymath}
which completes the proof.
\end{proof}

\section{Separated/Nearby: $\sigma_{\out/\near}$}
In this part, we still want to use the full control lemma to bound the pairing. Notice that since $J_1,J_2$ are near, from a simple lemma proved by Hyt\"{o}nen in \cite{Hy}, the cube $V=J_1\vee J_2$ satisfies $\ell(V)\leq 2^r\ell(J_1)$, hence the size of $J_1$, $J_2$ and $V$ are comparable. Since
\begin{displaymath}
|\sigma_{\out/\near}|\leq \sum_{i_2=1}^\infty\sum_{i_1=i_2}^\infty\sum_{j_1=1}^r\sum_{j_2=1}^{j_1}\sum_{K,V}\sum_{I_1,I_2\subset K}^{(i_1,i_2)}\sum_{J_1,J_2\subset V}^{(j_1,j_2)}|\pair{M_{b'}TM_b\Delta^{b_1}_{I_1}\Delta^{b_2}_{J_1}f}{\Delta^{b'_1}_{I_2}\Delta^{b'_2}_{J_2}g}|
\end{displaymath}
and $\frac{|J_1|^{1/2}|J_2|^{1/2}}{|V|}\approx C$, in order to bound $\sigma_{\out/\near}$, it suffices to show
\begin{displaymath}
|\pair{M_{b'}TM_b\Delta^{b_1}_{I_1}\Delta^{b_2}_{J_1}f}{\Delta^{b'_1}_{I_2}\Delta^{b'_2}_{J_2}g}|\lesssim 2^{-i_1\delta/2}\frac{|I_1|^{1/2}|I_2|^{1/2}}{|K|}\|\Delta^{b_1}_{I_1}\Delta^{b_2}_{J_1}f\|_{L^2}\|\Delta^{b'_1}_{I_2}\Delta^{b'_2}_{J_2}g\|_{L^2}.
\end{displaymath}

To see this, since now both variables are separated but only the first separation is good, by the full kernel representation and the mixed H\"{o}lder-size condition,
\begin{displaymath}
\begin{split}
LHS&=|\int_{I_1\times J_1}\int_{I_2\times J_2}(K(x,y)-K(x,c_{I_1},y_2))\Delta^{b_1}_{I_1}f(y)b(y)\Delta^{b'_1}_{I_2}\Delta^{b'_2}_{J_2}g(x)b'(x)\,dxdy|\\
&\lesssim 2^{-i_1\delta/2}|K|^{-1}\sum_{s,t=1}^{2^m}\int_{I_1\times J_{1,s}}\int_{I_2\times J_{2,t}}|\ave{\Delta^{b_1}_{I_1}\Delta^{b_2}_{J_1}f}_{J_{1,s}}||\ave{\Delta^{b'_1}_{I_2}\Delta^{b'_2}_{J_2}g}_{J_{2,t}}|\frac{1}{|x_2-y_2|^m}\,dxdy\\
&\lesssim 2^{-i_1\delta/2}|K|^{-1}|I_1|^{1/2}|I_2|^{1/2}|J_1|^{-1/2}|J_2|^{-1/2}\|\Delta^{b_1}_{I_1}\Delta^{b_2}_{J_1}f\|_{L^2}\|\Delta^{b'_1}_{I_2}\Delta^{b'_2}_{J_2}g\|_{L^2}\cdot\\
&\qquad\qquad\sum_{s,t=1}^{2^m}\int_{J_{1,s}\times J_{2,t}}\frac{1}{|x_2-y_2|^m}\,dx_2dy_2\\
&\lesssim 2^{-i_1\delta/2}\frac{|I_1|^{1/2}|I_2|^{1/2}}{|K|}\|\Delta^{b_1}_{I_1}\Delta^{b_2}_{J_1}f\|_{L^2}\|\Delta^{b'_1}_{I_2}\Delta^{b'_2}_{J_2}g\|_{L^2},
\end{split}
\end{displaymath}
where the last step follows from the fact that the size of $J_1$, $J_2$ and $V$ are comparable.

\section{Inside/Inside: $\sigma_{\inside/\inside}$}
This part is comparably difficult to deal with, and is also the first place where the assumed $BMO$ conditions stated in the beginning come into play. We will also see that the boundedness of full paraproducts will play an important role in our estimates. To begin with, we first do the following decomposition. Let $I_1\subset I_{2,1}\in\children{(I_2)}, J_1\subset J_{2,1}\in\children{(J_2)}$, then

\begin{displaymath}
\begin{split}
&\pair{M_{b'}TM_b\Delta^{b_1}_{I_1}\Delta^{b_2}_{J_1}f}{\Delta^{b'_1}_{I_2}\Delta^{b'_2}_{J_2}g}\\
&=\pair{M_{b'}TM_b\Delta^{b_1}_{I_1}\Delta^{b_2}_{J_1}f}{\chi_{J_{2,1}^c}(\Delta^{b'_1}_{I_2}\Delta^{b'_2}_{J_2}g-\ave{\Delta^{b'_1}_{I_2}\Delta^{b'_2}_{J_2}g}^{b'_2}_{J_{2,1}})}\\
&\qquad +\pair{M_{b'}TM_b\Delta^{b_1}_{I_1}\Delta^{b_2}_{J_1}f}{\ave{\Delta^{b'_1}_{I_2}\Delta^{b'_2}_{J_2}g}^{b'_2}_{J_{2,1}}(x_1)\otimes 1(x_2)}\\
&=\pair{M_{b'}TM_b\Delta^{b_1}_{I_1}\Delta^{b_2}_{J_1}f}{\chi_{I_{2,1}^c\times J_{2,1}^c}(\Delta^{b'_1}_{I_2}\Delta^{b'_2}_{J_2}g-\ave{\Delta^{b'_1}_{I_2}\Delta^{b'_2}_{J_2}g}^{b'_2}_{J_{2,1}}-\ave{\Delta^{b'_1}_{I_2}\Delta^{b'_2}_{J_2}g}^{b'_1}_{I_{2,1}}+\ave{\Delta^{b'_1}_{I_2}\Delta^{b'_2}_{J_2}g}^{b'}_{I_{2,1}\times J_{2,1}})}\\
&\qquad +\pair{M_{b'}TM_b\Delta^{b_1}_{I_1}\Delta^{b_2}_{J_1}f}{\chi_{J_{2,1}^c}(\ave{\Delta^{b'_1}_{I_2}\Delta^{b'_2}_{J_2}g}^{b'_1}_{I_{2,1}}-\ave{\Delta^{b'_1}_{I_2}\Delta^{b'_2}_{J_2}g}^{b'}_{I_{2,1}\times J_{2,1}})}\\
&\qquad +\pair{M_{b'}TM_b\Delta^{b_1}_{I_1}\Delta^{b_2}_{J_1}f}{\chi_{I_{2,1}^c}(\ave{\Delta^{b'_1}_{I_2}\Delta^{b'_2}_{J_2}g}^{b'_2}_{J_{2,1}}-\ave{\Delta^{b'_1}_{I_2}\Delta^{b'_2}_{J_2}g}^{b'}_{I_{2,1}\times J_{2,1}})}\\
&\qquad +\pair{M_{b'}TM_b\Delta^{b_1}_{I_1}\Delta^{b_2}_{J_1}f}{\ave{\Delta^{b'_1}_{I_2}\Delta^{b'_2}_{J_2}g}^{b'}_{I_{2,1}\times J_{2,1}}1(x_1,x_2)}\\
&:=I+II+III+IV.
\end{split}
\end{displaymath}

\subsection*{Part $II$,$III$}
These two parts are symmetric, so it suffices to estimate one of them, say part $III$. This can be similarly dealt with as the second part in section Separated/Inside, where we used partial paraproducts. 
\begin{displaymath}
\begin{split}
\sum_{J_1\subsetneq J_2}III
&=\sum_{J_1\subsetneq J_2}\pair{\pair{M_{b'}TM_b\Delta^{b_1}_{I_1}\Delta^{b_2}_{J_1}f}{1}_2}{\chi_{I_{2,1}^c}(\ave{\Delta^{b'_1}_{I_2}\Delta^{b'_2}_{J_2}g}^{b'_2}_{J_1}-\ave{\Delta^{b'_1}_{I_2}\Delta^{b'_2}_{J_2}g}^{b'}_{I_1\times J_1})}_1\\
&=\sum_V \pair{\pair{M_{b'}TM_b\Delta^{b_1}_{I_1}\Delta^{b_2}_Vf}{1}_2}{\chi_{I_{2,1}^c}(\ave{\Delta^{b'_1}_{I_2}g}^{b'_2}_V-\ave{\Delta^{b'_1}_{I_2}g}^{b'}_{I_1\times V})}_1\\
&=\sum_V\pair{M_{b'}TM_b\Delta^{b_1}_{I_1}\Delta^{b_2}_Vf}{\chi_{I_{2,1}^c}(\ave{\Delta^{b'_1}_{I_2}g}^{b'_2}_V-\ave{\Delta^{b'_1}_{I_2}g}^{b'}_{I_1\times V})\otimes 1}\\
&=\sum_V\pair{\Delta^{b_1}_{I_1}f}{M_b\Delta^{b_1}_{I_1}\Delta^{b_2}_VT^*(b'_1\chi_{I_{2,1}^c}(\ave{\Delta^{b'_1}_{I_2}g}^{b'_2}_V-\ave{\Delta^{b'_1}_{I_2}g}^{b'}_{I_1\times V})\otimes b'_2)}\\
&=\sum_{k=2}^{2^n}\sum_{t=1}^{2^n}\pair{\chi_{I_{1,t}}\Delta^{b_1}_{I_1}f}{\sum_V\ave{\Delta^{b'_1}_{I_2}g|_{I_{2,k}}}^{b'_2}_VM_b\Delta^{b_1}_{I_1}\Delta^{b_2}_VT^*(\chi_{I_{2,k}}b'_1\otimes b'_2)}\\
&\qquad -\sum_{t=1}^{2^n}\pair{\chi_{I_{1,t}}\Delta^{b_1}_{I_1}f}{\sum_V\ave{\Delta^{b'_1}_{I_2}g|_{I_{2,1}}}^{b'_2}_VM_b\Delta^{b_1}_{I_1}\Delta^{b_2}_VT^*(\chi_{I_{2,1}^c}b'_1\otimes b'_2)}\\
&=\sum_{k=2}^{2^n}\sum_{t=1}^{2^n}\pair{\chi_{I_{1,t}}\Delta^{b_1}_{I_1}f}{b_1\otimes \pi^{b'_2,b_2}_{s_{I_{1,t},I_{2,k}}}(\Delta^{b'_1}_{I_2}g|_{I_{2,k}})}\\
&\qquad -\sum_{t=1}^{2^n}\pair{\chi_{I_{1,t}}\Delta^{b_1}_{I_1}f}{b_1\otimes \pi^{b'_2,b_2}_{s_{I_{1,t},I_{2,1}^c}}(\Delta^{b'_1}_{I_2}g|_{I_{2,1}})},
\end{split}
\end{displaymath} 
where $s_{I_{1,t},I_{2,k}}(x_2)=(\Delta^{b_1}_{I_1}T^*(\chi_{I_{2,k}}b'_1\otimes b'_2))|_{I_{1,t}}$, $s_{I_{1,t},I_{2,1}^c}(x_2)=(\Delta^{b_1}_{I_1}T^*(\chi_{I_{2,1}^c}b'_1\otimes b'_2))|_{I_{1,t}}$. Note that although formally, $s_{I_{1,t},I_{2,k}}$ is exactly the $h_{I_{1,t},I_{2,k}}$ we've encountered in section Separated/Inside, but here since the relative position of $I_1,I_2$ has changed, they are actually different functions. And we will prove later that although $s_{I_{1,t},I_{2,k}}$ is still in $BMO(\R^m)$, the estimate of its norm is different from $h_{I_{1,t},I_{2,k}}$. More specifically,
\begin{lem}\label{sBMO}
\begin{displaymath}
\|s_{I_{1,t},I_{2,k}}\|_{BMO(\R^m)}\lesssim 2^{-i_1\delta/2},\quad \|s_{I_{1,t},I_{2,1}^c}\|_{BMO(\R^m)}\lesssim 2^{-i_1\delta/2}
\end{displaymath}
\end{lem}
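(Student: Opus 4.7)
The plan is to prove the lemma by duality as in Lemma \ref{hBMO}. It suffices to show that for any cube $V\subset\R^m$ and any $V$-adapted zero-mean function $a$ with $|a|\leq 1$ and $\mathrm{spt}\,a\subset V$,
\[
|\pair{s_{I_{1,t},I_{2,k}}}{a}_2|\lesssim 2^{-i_1\delta/2}|V|,
\]
and analogously for $s_{I_{1,t},I_{2,1}^c}$. Unfolding the definition and moving $\Delta^{b_1}_{I_1}$ onto the test side, the pairing becomes
\[
\pair{\chi_{I_{2,k}}b'_1\otimes b'_2}{T\bigl(\Delta^{b_1*}_{I_1}(\chi_{I_{1,t}}/|I_{1,t}|)\otimes a\bigr)},
\]
which I would split along $\chi_{3V}+\chi_{(3V)^c}$ in $x_2$, treating the $(3V)^c$ piece by the full kernel representation with the double H\"older cancellation $K(x,y)\mapsto K(x,y)-K(x,c_{I_1},y_2)-K(x,y_1,c_V)+K(x,c_{I_1},c_V)$, and the $3V$ piece by the partial kernel $K_{b_2^{-1}a,\chi_{3V}}$ with a single $y_1$-cancellation $K_{b_2^{-1}a,\chi_{3V}}(x_1,y_1)-K_{b_2^{-1}a,\chi_{3V}}(x_1,c_{I_1})$.

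The key geometric input, replacing the separation hypothesis of Lemma \ref{hBMO}, is the goodness of $I_1$ within $I_2$. Since $k\neq 1$ and $I_1\subset I_{2,1}$, the sibling $I_{2,k}$ lies across an internal dyadic wall of $I_2$, and goodness of $I_1$ with respect to $I_{2,1}$ (valid for $i_1$ larger than the goodness threshold $r$, the finitely many exceptional values being absorbed in the constant) yields
\[
\dist(I_1,I_{2,k})\geq \dist(I_1,\partial I_{2,1})\gtrsim \ell(I_1)^{\gamma_n}\ell(I_2)^{1-\gamma_n}.
\]
Combining this with the identity $\gamma_n(n+\delta)=\delta/2$, the H\"older decay and the separation give an $x_1$-integration bound
\[
\int_{I_{2,k}}\frac{\ell(I_1)^\delta}{|x_1-c_{I_1}|^{n+\delta}}\,dx_1 \lesssim \frac{|I_{2,k}|}{|I_2|}\cdot 2^{-i_1\delta/2} \lesssim 2^{-i_1\delta/2},
\]
which carries the entire $2^{-i_1\delta/2}$ gain with no residual geometric factor. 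The $y_2$- and $x_2$-integrations contribute $|V|$ in the usual way (either via $\ell(V)^\delta\int_{(3V)^c}|x_2-c_V|^{-m-\delta}\,dx_2\cdot|V|\lesssim|V|$ in the full-kernel part, or via $C(b_2^{-1}a,\chi_{3V})\lesssim|V|$ from the weakened partial C-Z hypothesis in the partial-kernel part). Meanwhile $\Delta^{b_1*}_{I_1}(\chi_{I_{1,t}}/|I_{1,t}|)$ has $L^1$-norm $\lesssim 1$. This is the structural reason why the present $BMO$ bound carries no $|K|^{-1}|I_2|$ prefactor: the ``outer'' cube is simply $I_2$ itself, and the ratio $|I_{2,k}|/|I_2|$ is an absolute constant.

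For $s_{I_{1,t},I_{2,1}^c}$, I would decompose $I_{2,1}^c=\bigcup_{k\neq 1}I_{2,k}\cup I_2^c$. The $2^n-1$ sibling pieces are handled by the estimate above. For the $I_2^c$ piece, the separation $\dist(I_1,I_2^c)\geq 2\ell(I_1)^{\gamma_n}\ell(I_2)^{1-\gamma_n}$ is immediate from goodness of $I_1$ with respect to $I_2$, so the same full-/partial-kernel split and the same H\"older estimates yield the same bound; the only change is to replace the integration $\int_{I_{2,k}}$ by $\int_{I_2^c}$, whose contribution is controlled by the classical tail estimate $\int_{|x_1-c_{I_1}|\gtrsim d}|x_1-c_{I_1}|^{-n-\delta}\,dx_1\lesssim d^{-\delta}$ with $d=\ell(I_1)^{\gamma_n}\ell(I_2)^{1-\gamma_n}$. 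The main (mild) subtlety is simply the bookkeeping: verifying that goodness within $I_{2,1}$ — and not only within $I_2$ — supplies the required separation from sibling cubes, and tracking that the previous $|K|^{-1}|I_2|$ prefactor genuinely disappears in this geometry; beyond that, the argument is a direct adaptation of the proof of Lemma \ref{hBMO}.
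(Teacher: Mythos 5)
Your argument for the regime $\ell(I_1)<2^{-r}\ell(I_2)$ (equivalently $i_1>r$) is correct and matches the paper: goodness of $I_1$ with respect to $I_{2,1}$ (or $I_2$) supplies $\dist(I_1,I_{2,1}^c)\gtrsim \ell(I_1)^{\gamma_n}\ell(I_2)^{1-\gamma_n}$, and the computation using $\gamma_n(n+\delta)=\delta/2$ gives the factor $2^{-i_1\delta/2}$ with no residual $|K|^{-1}|I_2|$ because here $K=I_2$. That part of the comparison with Lemma \ref{hBMO} is exactly right.

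The gap is in the complementary regime $2^{-r}\ell(I_2)\leq\ell(I_1)<\ell(I_2)$. You dismiss these ``finitely many exceptional values'' as being ``absorbed in the constant,'' but that is not a proof: one still has to establish $|\pair{s_{I_{1,t},I_{2,k}}}{a}_2|\lesssim|V|$, and your proposed decomposition — full kernel with double H\"older on $(3V)^c$, partial kernel with a $y_1$-H\"older increment on $3V$ — does not close in this regime. When $i_1\leq r$ the cube $I_1$ can sit flush against an internal wall of $I_{2,1}$, so $\dist(I_1,I_{2,k})$ can be as small as $0$, and the portion of $I_{2,k}$ lying inside $3I_1$ violates the hypothesis $|y_1-c_{I_1}|\leq|x_1-c_{I_1}|/2$ needed for the H\"older increment in $y_1$. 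The integral $\int_{I_{2,k}}\ell(I_1)^\delta/|x_1-c_{I_1}|^{n+\delta}\,dx_1$ is then not controlled by your separation estimate. The paper resolves this by an additional split of $I_{2,k}$ into $3I_1\cap I_{2,k}$ and $(3I_1)^c\cap I_{2,k}$, applying the \emph{size} condition for the partial kernel (and, after crossing with $\chi_{3V}/\chi_{(3V)^c}$, the mixed H\"older--size and size conditions for the full kernel) on the near piece, and H\"older only on the far piece — four terms $(1)$--$(4)$ in total. Without this extra decomposition your proof is incomplete precisely where the goodness hypothesis no longer bites; everything else you write is sound.
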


Let's assume this to be true right now. Then
\begin{displaymath}
\begin{split}
\sum_{J_1\subsetneq J_2}III&=\sum_{k=2}^{2^n}\sum_{t=1}^{2^n}(\int_{I_{1,t}}b_1)\pair{\Delta^{b_1}_{I_1}f|_{I_{1,t}}}{\pi^{b'_2,b_2}_{s_{I_{1,t},I_{2,k}}}(\Delta^{b'_1}_{I_2}g|_{I_{2,k}})}_2\\
&\qquad -\sum_{t=1}^{2^n}(\int_{I_{1,t}}b_1)\pair{\Delta^{b_1}_{I_1}f|_{I_{1,t}}}{\pi^{b'_2,b_2}_{s_{I_{1,t},I_{2,1}^c}}(\Delta^{b'_1}_{I_2}g|_{I_{2,1}})}_2\\
&=\sum_{k=2}^{2^n}\sum_{t=1}^{2^n}(\int_{I_{1,t}}b_1)\pair{\Delta^{b'_1*}_{I_2}(\frac{\chi_{I_{2,k}}}{|I_{2,k}|})\otimes \pi^{b'_2,b_2*}_{s_{I_{1,t},I_{2,k}}}(\Delta^{b_1}_{I_1}f|_{I_{1,t}})}{g}\\
&\qquad -\sum_{t=1}^{2^n}(\int_{I_{1,t}}b_1)\pair{\Delta^{b'_1*}_{I_2}(\frac{\chi_{I_{2,1}}}{|I_{2,1}|})\otimes \pi^{b'_2,b_2*}_{s_{I_{1,t},I_{2,1}^c}}(\Delta^{b_1}_{I_1}f|_{I_{1,t}})}{g}\\
&:=(1)-(2).
\end{split}
\end{displaymath}

Note that part $(1)$ is exactly the same as the pairing appeared in $\sigma_{\out/\inside}''$, except that here the partial paraproduct is defined using a different $BMO$ function. Hence, following exactly the same argument, for any $t,k$, we have
\begin{displaymath}
\begin{split}
&\|\sum_{I_2}\sum_{I_1\subset I_2}^{(i_1)}(\int_{I_{1,t}}b_1)\Delta^{b'_1*}_{I_2}(\frac{\chi_{I_{2,k}}}{|I_{2,k}|})\otimes\pi^{b'_2,b_2*}_{s_{I_{1,t},I_{2,k}}}(\Delta^{b_1}_{I_1}f|_{I_{1,t}})\|_{L^2}\\
&\lesssim 2^{-i_1\delta/2}\sup_{\|g\|_{L^2}\leq 1}\sum_{I_2}\sum_{I_1\subset I_2}^{(i_1)}\frac{|I_1|^{1/2}}{|I_2|^{1/2}}\|\Delta^{b_1}_{I_1}f\|_{L^2}\|\Delta^{b'_1}_{I_2}g\|_{L^2}\\
&\lesssim 2^{-i_1\delta/2}\|f\|_{L^2},
\end{split}
\end{displaymath}
where again, in the last step, we used the first partial control lemma. 

Similarly, although in part $(2)$, the form of the pairing is a little bit different, however, when dealing with $\Delta^{b'_1*}_{I_2}(\frac{\chi_{I_{2,1}}}{|I_{2,1}|})$, we only need to bound it by 
\begin{displaymath}
C\|\Delta^{b'_1}_{I_2}(\frac{b^{'-1}_1\chi_{I_{2,1}}}{|I_{2,1}|})\|_{L^2(\R^n)}\lesssim |I_2|^{-1/2},
\end{displaymath}
and since the norm of the $BMO$ function has the same bound, so all the rest of the argument for part $(1)$ still works here. i.e. This part satisfies the same estimate as part $(1)$ does.

In conclusion, 

\begin{displaymath}
\begin{split}
|\sum_{I_1\subsetneq I_2}\sum_{J_1\subsetneq J_2}III|&=|\sum_{i_1=1}^\infty\sum_{I_2}\sum_{I_1\subset I_2}^{(i_1)}\sum_{J_1\subsetneq J_2}III|=|\sum_{i_1=1}^\infty\sum_{I_2}\sum_{I_1\subset I_2}^{(i_1)}(1)-(2)|\\
&\lesssim \sum_{i_1=1}^\infty 2^{-i_1\delta/2}\|f\|_{L^2}\|g\|_{L^2}\lesssim \|f\|_{L^2}\|g\|_{L^2}.
\end{split}
\end{displaymath}

And we are only left to prove Lemma \ref{sBMO}:

\begin{proof}(of Lemma \ref{sBMO})
We only prove the inequality for $s_{I_{1,t},I_{2,k}}$, since the other one follows from exactly the same argument. Let cube $V\subset\R^m$, $a$ is any function supported on $V$ such that $|a|\leq 1, \int a=0$. It suffices to show $\pair{s_{I_{1,t},I_{2,k}}}{a}_2\lesssim 2^{-i_1\delta/2}|V|$. 

In the case $\ell(I_1)<2^{-r}\ell(I_2)$, we have $\dist(I_1,I_{2,1}^c)\geq \ell(I_1)^{\gamma_n}\ell(I_2)^{1-\gamma_n}$, i.e. the separation of $I_1$ and $I_{2,k}$ is good enough. Then following from the same reasoning in the proof of Lemma \ref{hBMO}, and note that now $I_2=K$, we have $\pair{s_{I_{1,t},I_{2,k}}}{a}_2\lesssim 2^{-i_1\delta/2}|K|^{-1}|I_2||V|=2^{-i_1\delta/2}|V|$.

Now let's assume $2^{-r}\ell(I_2)\leq\ell(I_1)<\ell(I_2)$. Then the size of $I_1, I_2$ are comparable, i.e. $2^{-i_1}\approx C$, so it suffices to show $\pair{s_{I_{1,t},I_{2,k}}}{a}_2\lesssim |V|$. Split
\begin{displaymath}
\begin{split}
&\pair{s_{I_{1,t},I_{2,k}}}{a}_2\\
&=\pair{\chi_{I_{2,k}}b'_1\otimes b'_2}{T(\Delta^{b_1*}_{I_1}(\frac{\chi_{I_{1,t}}}{|I_{1,t}|})\otimes a)}\\
&=\pair{\chi_{3I_1\cap I_{2,k}}b'_1\otimes \chi_{3V}b'_2}{T(\Delta^{b_1*}_{I_1}(\frac{\chi_{I_{1,t}}}{|I_{1,t}|})\otimes a)}+\pair{\chi_{(3I_1)^c\cap I_{2,k}}b'_1\otimes \chi_{3V}b'_2}{T(\Delta^{b_1*}_{I_1}(\frac{\chi_{I_{1,t}}}{|I_{1,t}|})\otimes a)}\\
&\quad+\pair{\chi_{3I_1\cap I_{2,k}}b'_1\otimes \chi_{(3V)^c}b'_2}{T(\Delta^{b_1*}_{I_1}(\frac{\chi_{I_{1,t}}}{|I_{1,t}|})\otimes a)}+\pair{\chi_{(3I_1)^c\cap I_{2,k}}b'_1\otimes \chi_{(3V)^c}b'_2}{T(\Delta^{b_1*}_{I_1}(\frac{\chi_{I_{1,t}}}{|I_{1,t}|})\otimes a)}\\
&:=(1)+(2)+(3)+(4).
\end{split}
\end{displaymath}

By the partial kernel representation and size condition for the partial kernel,
\begin{displaymath}
\begin{split}
(1)&=\int_{I_1}\int_{3I_1\cap I_{2,k}}K_{b^{-1}_2a,\chi_{3V}}(x_1,y_1)\Delta^{b_1*}_{I_1}(\frac{\chi_{I_{1,t}}}{|I_{1,t}|})(y_1)b'_1(x_1)\,dx_1dy_1\\
&\lesssim C(b^{-1}_2a,\chi_{3V})|I_1|^{-1}\int_{I_1}\int_{3I_1\cap I_{2,k}}\frac{1}{|x_1-y_1|^n}\,dx_1dy_1\lesssim  |V|.
\end{split}
\end{displaymath}

By the partial kernel representation and H\"{o}lder condition for the partial kernel,
\begin{displaymath}
\begin{split}
(2)&=\int_{I_1}\int_{(3I_1)^c\cap I_{2,k}}(K_{b^{-1}_2a,\chi_{3V}}(x_1,y_1)-K_{b^{-1}_2a,\chi_{3V}}(x_1,c_{I_1}))\Delta^{b_1*}_{I_1}(\frac{\chi_{I_{1,t}}}{|I_{1,t}|})(y_1)b'_1(x_1)\,dx_1dy_1\\
&\lesssim C(b^{-1}_2a,\chi_{3V})|I_1|^{-1}\int_{I_1}\int_{(3I_1)^c\cap I_{2,k}} \frac{\ell(I_1)^\delta}{|x_1-c_{I_1}|^{n+\delta}}\lesssim |V|.
\end{split}
\end{displaymath}

By the full kernel representation and mixed H\"{o}lder-size condition,
\begin{displaymath}
\begin{split}
(3)&=\int_{I_1\times V}\int_{3I_1\cap I_{2,k}\times (3V)^c}(K(x,y)-K(x,y_1,c_V))b'(x)\Delta^{b_1*}_{I_1}(\frac{\chi_{I_{1,t}}}{|I_{1,t}|})(y_1)a(y_2)\,dxdy\\
&\lesssim |I_1|^{-1}\ell(V)^\delta\int_{I_1\times V}\int_{3I_1\cap I_{2,k}\times (3V)^c}\frac{1}{|x_1-y_1|^n}\frac{1}{|x_2-c_V|^{m+\delta}}\,dxdy\\
&\lesssim |I_1|^{-1}\ell(V)^\delta |V||I_1|\ell(V)^{-\delta}=|V|.
\end{split}
\end{displaymath}

By the full kernel representation and H\"{o}lder condition,
\begin{displaymath}
\begin{split}
(4)&=\int_{I_1\times V}\int_{(3I_1)^c\cap I_{2,k}\times (3V)^c}(K(x,y)-K(x,c_{I_1},y_2)-K(x,y_1,c_V)+K(x,c_{I_1},c_V))\cdot\\
&\qquad b'(x)\Delta^{b_1*}_{I_1}(\frac{\chi_{I_{1,t}}}{|I_{1,t}|})(y_1)a(y_2)\,dxdy\\
&\lesssim \ell(I_1)^\delta\ell(V)^\delta|I_1|^{-1}\int_{I_1\times V}\int_{(3I_1)^c\cap I_{2,k}\times (3V)^c}\frac{1}{|x_1-c_{I_1}|^{n+\delta}}\frac{1}{|x_2-c_V|^{m+\delta}}\,dxdy\\
&\lesssim |I_2|^{-1}\ell(V)^\delta|I_1|^{-1}|I_1||V||I_2|\ell(V)^{-\delta}=|V|.
\end{split}
\end{displaymath}

Hence, the proof is complete.
\end{proof}

\subsection*{Part $I$}
In part $I$, since the functions in the pairing are separated on both variables, by an argument similar to what we did in the section Separated/Inside, 
\begin{displaymath}
|I|\lesssim (\frac{\ell(I_1)}{\ell(I_2)})^{\delta/2}(\frac{\ell(J_1)}{\ell(J_2)})^{\delta/2}(\frac{|I_1|}{|I_2|})^{1/2}(\frac{|J_1|}{|J_2|})^{1/2}\|\Delta^{b_1}_{I_1}\Delta^{b_2}_{J_1}f\|_{L^2}\|\Delta^{b'_1}_{I_2}\Delta^{b'_2}_{J_2}g\|_{L^2},
\end{displaymath} 
which combined with the full control lemma, will give the boundedness of part $I$. (Note that in order to prove the above inequality, we need to discuss four different cases depending on whether $\ell(I_1)<2^{-r}\ell(I_2)$ and whether $\ell(J_1)<2^{-r}\ell(J_2)$, and use size, H\"{o}lder, or mixed H\"{o}lder-size conditions accordingly in each case.)

\subsection*{Part $IV$}
To deal with this part, we need to use the $b$-adapted full paraproducts and its $L^2\rightarrow L^2$ boundedness. Write
\begin{displaymath}
\begin{split}
&\sum_{I_1\subsetneq I_2}\sum_{J_1\subsetneq J_2}\pair{M_{b'}TM_b\Delta^{b_1}_{I_1}\Delta^{b_2}_{J_1}f}{\ave{\Delta^{b'_1}_{I_2}\Delta^{b'_2}_{J_2}g}^{b'}_{I_1\times J_1}}\\
&=\sum_{J_1\subsetneq J_2}\sum_{I_1}\pair{M_{b'}TM_b\Delta^{b_1}_{I_1}\Delta^{b_2}_{J_1}f}{\ave{\sum_{I_2\supsetneq I_1}\Delta^{b'_1}_{I_2}\Delta^{b'_2}_{J_2}g}^{b'}_{I_1\times J_1}}\\
&=\sum_{J_1\subsetneq J_2}\sum_{I_1}\pair{M_{b'}TM_b\Delta^{b_1}_{I_1}\Delta^{b_2}_{J_1}f}{\ave{\Delta^{b'_2}_{J_2}g}^{b'}_{I_1\times J_1}}\\
&=\sum_{I_1}\sum_{J_1}\pair{M_{b'}TM_b\Delta^{b_1}_{I_1}\Delta^{b_2}_{J_1}f}{\ave{g}^{b'}_{I_1\times J_1}}\\
&=\pair{f}{\pi^{b',b}_{T^*b'}(g)}.
\end{split}
\end{displaymath}

By assumption, $T^*b'\in BMO(\R^n\times\R^m)$, then use the $L^2$ boundedness of the full paraproduct, we have
\begin{displaymath}
\sum_{I_1\subsetneq I_2}\sum_{J_1\subsetneq J_2}IV\lesssim \|T^*b'\|_{BMO}\|f\|_{L^2}\|g\|_{L^2}.
\end{displaymath}

\section{Inside/Equal and Inside/Nearby: $\sigma_{\inside/=},\sigma_{\inside/\near}$}
The ways to estimate these two parts are similar, so we only explain the first one as an example. Let $I_1\subset I_{2,1}\in\children{(I_2)}$, split
\begin{displaymath}
\begin{split}
\sigma_{\inside/=}&=\sum_{I_1\subsetneq I_2}\sum_V\pair{M_{b'}TM_b\Delta^{b_1}_{I_1}\Delta^{b_2}_Vf}{\Delta^{b'_1}_{I_2}\Delta^{b'_2}_Vg}\\
&=\sum_{I_1\subsetneq I_2}\sum_V\pair{M_{b'}TM_b\Delta^{b_1}_{I_1}\Delta^{b_2}_Vf}{\chi_{I_{2,1}^c}(\Delta^{b'_1}_{I_2}\Delta^{b'_2}_Vg-\ave{\Delta^{b'_1}_{I_2}\Delta^{b'_2}_Vg}^{b'_1}_{I_{2,1}})}\\
&\qquad +\sum_{I_1\subsetneq I_2}\sum_V\pair{M_{b'}TM_b\Delta^{b_1}_{I_1}\Delta^{b_2}_Vf}{1(x_1)\otimes\ave{\Delta^{b'_1}_{I_2}\Delta^{b'_2}_{V}g}^{b'_1}_{I_1}(x_2)}\\
&:=\sigma_{\inside/=}'+\sigma_{\inside/=}''.
\end{split}
\end{displaymath}

To bound $\sigma_{\inside/=}'$. In the case $\ell(I_1)<2^{-r}\ell(I_2)$, it can be dealt with similarly as in the case Separated/Equal. In the case $2^{-r}\ell(I_2)\leq\ell(I_1)<\ell(I_2)$, we claim that 
\begin{displaymath}
|\sigma_{\inside/=}'|\lesssim \sum_{i_1=1}^\infty\sum_{I_2}\sum_{I_1\subset I_2}^{(i_1)}\sum_V2^{-i_1\delta/2}\frac{|I_1|^{1/2}}{|I_2|^{1/2}}\|\Delta^{b_1}_{I_1}\Delta^{b_2}_{V}f\|_{L^2}\|\Delta^{b'_1}_{I_2}\Delta^{b'_2}_Vg\|_{L^2},
\end{displaymath}
then the full control lemma implies the correct bound.

In order to prove the claim, further split
\begin{displaymath}
\begin{split}
&\pair{M_{b'}TM_b\Delta^{b_1}_{I_1}\Delta^{b_2}_Vf}{\chi_{I_{2,1}^c}(\Delta^{b'_1}_{I_2}\Delta^{b'_2}_Vg-\ave{\Delta^{b'_1}_{I_2}\Delta^{b'_2}_Vg}^{b'_1}_{I_{2,1}})}\\
&=\sum_{\substack{V',V''\in\children{(V)}\\V'\neq V''}}\pair{M_{b'}TM_b(\chi_{V'}\Delta^{b_1}_{I_1}\Delta^{b_2}_Vf)}{\chi_{3I_1\cap I_{2,1}^c}\otimes\chi_{V''}(\Delta^{b'_1}_{I_2}\Delta^{b'_2}_Vg-\ave{\Delta^{b'_1}_{I_2}\Delta^{b'_2}_Vg}^{b'_1}_{I_{2,1}})}\\
&\quad +\sum_{\substack{V',V''\in\children{(V)}\\V'\neq V''}}\pair{M_{b'}TM_b(\chi_{V'}\Delta^{b_1}_{I_1}\Delta^{b_2}_Vf)}{\chi_{(3I_1)^c\cap I_{2,1}^c}\otimes\chi_{V''}(\Delta^{b'_1}_{I_2}\Delta^{b'_2}_Vg-\ave{\Delta^{b'_1}_{I_2}\Delta^{b'_2}_Vg}^{b'_1}_{I_{2,1}})}\\
&\quad +\sum_{V'\in\children{(V)}}\pair{M_{b'}TM_b(\chi_{V'}\Delta^{b_1}_{I_1}\Delta^{b_2}_Vf)}{\chi_{3I_1\cap I_{2,1}^c}\otimes\chi_{V'}(\Delta^{b'_1}_{I_2}\Delta^{b'_2}_Vg-\ave{\Delta^{b'_1}_{I_2}\Delta^{b'_2}_Vg}^{b'_1}_{I_{2,1}})}\\
&\quad +\sum_{V'\in\children{(V)}}\pair{M_{b'}TM_b(\chi_{V'}\Delta^{b_1}_{I_1}\Delta^{b_2}_Vf)}{\chi_{(3I_1)^c\cap I_{2,1}^c}\otimes\chi_{V'}(\Delta^{b'_1}_{I_2}\Delta^{b'_2}_Vg-\ave{\Delta^{b'_1}_{I_2}\Delta^{b'_2}_Vg}^{b'_1}_{I_{2,1}})}\\
&:=(1)+(2)+(3)+(4).
\end{split}
\end{displaymath}

In part $(1)$ and $(2)$, both variables are separated, so we use the full kernel representation. And by the size condition and the mixed H\"{o}lder-size condition, respectively, they are bounded. In part $(3)$ and $(4)$, only the first variable is separated, so we need the partial kernel representation. By the size condition and H\"{o}lder condition for the partial kernel, respectively, they are bounded as well. We omit the details.

Now we deal with $\sigma_{\inside/=}''$, which needs the partial paraproduct argument, but is much easier than the cases we've seen before. As before, rewrite
\begin{displaymath}
\begin{split}
&\sum_{I_1\subsetneq I_2}\pair{M_{b'}TM_b\Delta^{b_1}_{I_1}\Delta^{b_2}_Vf}{\ave{\Delta^{b'_1}_{I_2}\Delta^{b'_2}_Vg}^{b'_1}_{I_1}}\\
&=\sum_{K}\pair{M_{b'}TM_b\Delta^{b_1}_K\Delta^{b_2}_Vf}{1(x_1)\otimes\ave{\Delta^{b'_2}_Vg}^{b'_1}_K(x_2)}\\
&=\sum_{t=1}^{2^m}\sum_{k=1}^{2^m}\pair{\chi_{V_t}\Delta^{b_2}_Vf}{\pi^{b'_1,b_1}_{r_{V_t,V_k}}(\Delta^{b'_2}_Vg|_{V_k})\otimes b_2}\\
&=\sum_{t=1}^{2^m}\sum_{k=1}^{2^m}(\int_{V_t}b_2)\pair{\pi^{b'_1,b_1*}_{r_{V_t,V_k}}(\Delta^{b_2}_Vf|_{V_t})\otimes\Delta^{b'_2*}_V(\frac{\chi_{V_k}}{|V_k|})}{g},
\end{split}
\end{displaymath}
where $r_{V_t,V_k}(x_1)=(\Delta^{b_2}_VT^*(b'_1\otimes \chi_{V_k}b'_2))|_{V_t}$ is a $BMO$ function whose norm satisfies the following lemma.

\begin{lem}\label{rBMO}
\begin{displaymath}
\|r_{V_t,V_k}\|_{BMO(\R^n)}\lesssim C.
\end{displaymath}
\end{lem}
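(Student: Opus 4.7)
The plan is to mirror the approach of Lemma~\ref{sBMO}. It suffices to check $|\langle r_{V_t,V_k}, a\rangle_1|\lesssim |K|$ for any cube $K\subset\R^n$ and any mean-zero $K$-adapted $a$ with $|a|\le 1$. Using the adjoint identity $M_{b_2}\Delta^{b_2}_V = \Delta^{b_2*}_V M_{b_2}$, the pairing rewrites as
\[\langle r_{V_t,V_k}, a\rangle_1 = \langle b'_1\otimes \chi_{V_k}b'_2,\,T(a\otimes A)\rangle,\]
where $A := \Delta^{b_2*}_V(\chi_{V_t}/|V_t|) = b_2\chi_{V_t}/\int_{V_t}b_2 - b_2\chi_V/\int_V b_2$. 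A direct computation shows that $A$ is supported in $V$, satisfies $|A|\lesssim |V|^{-1}$ and $\int A=0$, and $A/b_2$ is piecewise constant on the children of $V$ with each piece of modulus $\lesssim 1/|V|$.

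I would then split the first-variable factor of the test function as $b'_1 = \chi_{3K}b'_1 + \chi_{(3K)^c}b'_1$. For the separated piece $x_1\in(3K)^c$, I would invoke the partial kernel representation in the first variable followed by the cancellation $\int a=0$ and the partial $y_1$-H\"older estimate, obtaining a bound $\lesssim |K|$ once the partial kernel $K_{A/b_2,\chi_{V_k}}$ is controlled with a universal constant. For the localized piece $x_1\in 3K$, I would further decompose $\chi_{3K} = \chi_K + \chi_{3K\setminus K}$: the contribution from the neighboring cubes $3K\setminus K$ is disjoint from $\mathrm{supp}(a)$, so it also admits the same partial-kernel treatment (size estimate suffices, since $x_1\in 3K\setminus K$ keeps $|x_1-y_1|\sim \ell(K)$). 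The truly diagonal piece on $\chi_K\otimes\chi_{V_k}$ is then split according to the input, $A = A\chi_{V_k}+A\chi_{V\setminus V_k}$: the $A\chi_{V_k}$ part is a scalar (of size $\lesssim 1/|V|$) times $b_2\chi_{V_k}$, and the resulting bilinear form matches Diagonal BMO condition~1 with $a_K=a$ and $V=V_k$, yielding $|K|\cdot |V_k|/|V|\lesssim |K|$; the $A\chi_{V\setminus V_k}$ part has separation in the second variable (disjoint children of $V$), and is handled by the symmetric partial kernel representation in the second variable together with the cancellation $\int_V A=0$.

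The main obstacle will be establishing the universal partial C--Z estimates on $K_{A/b_2,\chi_{V_k}}$, since $(A/b_2,\chi_{V_k})$ is not directly one of the three allowed forms from the weakened partial C--Z assumption. This is resolved by the bilinear decomposition $\chi_{V_k} = \tilde c_V \chi_V + u'_V b'^{-1}_2$ with $\tilde c_V := \int_{V_k}b'_2/\int_V b'_2$ and $u'_V := (\chi_{V_k}-\tilde c_V\chi_V)b'_2$ zero-mean $V$-adapted, combined with the representation $A/b_2 = u_V b^{-1}_2/|V|$ (which is legitimate because $\int A=0$ makes $u_V:=A|V|$ zero-mean $V$-adapted and bounded). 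Bilinearity then splits $K_{A/b_2,\chi_{V_k}}$ into a controlled $(u_V b^{-1}_2,\chi_V)$ piece (whose assumed C--Z constant $\lesssim |V|$ absorbs the $|V|^{-1}$ scaling) plus a residual doubly zero-mean piece, which one further treats by splitting $A/b_2$ across the children $V_j$ of $V$ and appealing to either the allowed form $(\chi_{V_j},\chi_{V_j})$ when $j=k$ or full-kernel estimates exploiting $V_j\cap V_k=\emptyset$ (with full H\"older cancellations and, where the $|x_2-y_2|$ integration becomes singular near adjacent children, the Diagonal BMO / weak boundedness hypotheses) when $j\ne k$.
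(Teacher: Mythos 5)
Your overall plan agrees with the paper's: test $r_{V_t,V_k}$ against a mean-zero $K$-adapted $a$, split the first variable into $\chi_K$, $\chi_{3K\setminus K}$, $\chi_{(3K)^c}$, and then use, respectively, the diagonal BMO / partial kernel / full kernel machinery. Your handling of the truly diagonal piece is correct and matches the paper (Diagonal BMO condition for the $V_k$ contribution, size estimate for the partial kernel in the second variable for the $V\setminus V_k$ contribution). The difference, and the trouble, is in how you propose to justify the partial C--Z estimates for the off-diagonal pieces.

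The paper never tries to control the kernel $K_{A/b_2,\chi_{V_k}}$ as a single object. It decomposes $A=\Delta^{b_2*}_V(\chi_{V_t}/|V_t|)$ over the children $V_s$ of $V$ \emph{before} choosing a kernel representation. Then for $s\neq k$ there is separation in both variables, so one can use the full kernel representation directly (no ``allowed forms'' constraint at all: mixed H\"older--size for the $(3K)^c$ piece, size for the $3K\setminus K$ piece), and for $s=k$ the second-variable input reduces to a scalar multiple of $\chi_{V_k}$, so the partial kernel is simply $c\,K_{\chi_{V_k},\chi_{V_k}}$ with $|c|\lesssim|V|^{-1}$ and $C(\chi_{V_k},\chi_{V_k})\lesssim|V_k|$, which is the allowed form. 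Your bilinear decomposition $\chi_{V_k}=\tilde c_V\chi_V+u'_Vb'^{-1}_2$ combined with $A/b_2=u_Vb_2^{-1}/|V|$ correctly isolates an allowed $(u_Vb_2^{-1},\chi_V)$ piece, but the residual is of the form $(A/b_2,\,u'_Vb'^{-1}_2)$, and decomposing only $A/b_2$ across children $V_j$ leaves pairs $(\chi_{V_j},\,u'_Vb'^{-1}_2)$ in which the second argument is supported on all of $V$, not on $V_j$; so the appeal to the allowed form $(\chi_{V_j},\chi_{V_j})$ and to full kernel estimates with ``$V_j\cap V_k=\emptyset$'' does not match. To repair it you would also have to decompose the second argument across children, at which point the initial bilinear split is superfluous --- you might as well decompose $A$ into children at the outset, which is the paper's route and is strictly simpler. (Also, the worry about the $\int_{V_j}\int_{V_k}|x_2-y_2|^{-m}$ integration ``becoming singular near adjacent children'' is unfounded: for disjoint dyadic cubes of comparable side this integral is finite and $\lesssim|V|$, so no appeal to the diagonal BMO or weak boundedness hypotheses is needed there.)
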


We postpone the proof, and assume this bound for the moment. Then
\begin{displaymath}
|\sigma_{\inside/=}''|\leq\sum_{t=1}^{2^m}\sum_{k=1}^{2^m}\|\sum_V (\int_{V_t}b_2)\pi^{b'_1,b_1*}_{r_{V_t,V_k}}(\Delta^{b_2}_Vf|_{V_t})\otimes\Delta^{b'_2*}_V(\frac{\chi_{V_k}}{|V_k|})\|_{L^2}\|g\|_{L^2}.
\end{displaymath}

By a similar argument as in the previous two partial paraproducts, involving the estimate of the $BMO$ norm of $r_{V_t,V_k}$ and the $L^2$ boundedness of the partial paraproduct, it is not hard to show that for any $t,k$,
\begin{displaymath}
\|\sum_V |\int_{V_t}b_2|\pi^{b'_1,b_1*}_{r_{V_t,V_k}}(\Delta^{b_2}_Vf|_{V_t})\otimes\Delta^{b'_2*}_V(\frac{\chi_{V_k}}{|V_k|})\|_{L^2}\lesssim \|f\|_{L^2},
\end{displaymath}
which completes the estimate of part $\sigma_{\inside/=}''$.

\begin{proof}(of Lemma \ref{rBMO})
For any cube $K\subset\R^n$ and any function $a$ supported on $K$ such that $|a|\leq 1, \int a=0$, we claim that $\pair{r_{V_t,V_k}}{a}_1\lesssim |K|$.

To see this, write
\begin{displaymath}
\begin{split}
\pair{r_{V_t,V_k}}{a}_1&=\pair{\chi_Kb'_1\otimes\chi_{V_k}b'_2}{T(a\otimes\Delta^{b_2*}_V(\frac{\chi_{V_t}}{|V_t|}))}\\
&\qquad +\pair{\chi_{(3K)^c}b'_1\otimes\chi_{V_k}b'_2}{T(a\otimes\Delta^{b_2*}_V(\frac{\chi_{V_t}}{|V_t|}))}\\
&\qquad +\pair{\chi_{3K\setminus K}b'_1\otimes\chi_{V_k}b'_2}{T(a\otimes\Delta^{b_2*}_V(\frac{\chi_{V_t}}{|V_t|}))}\\
&:=(1)+(2)+(3).
\end{split}
\end{displaymath}

For part $(1)$, write
\begin{displaymath}
(1)=\sum_{s=1}^{2^m}\pair{\chi_Kb'_1\otimes\chi_{V_k}b'_2}{T(a\otimes\chi_{V_s}\Delta^{b_2*}_V(\frac{\chi_{V_t}}{|V_t|}))}.
\end{displaymath}
If $s\neq k$, use partial kernel representation and size condition for the partial kernel,
\begin{displaymath}
\begin{split}
&\pair{\chi_Kb'_1\otimes\chi_{V_k}b'_2}{T(a\otimes\chi_{V_s}\Delta^{b_2*}_V(\frac{\chi_{V_t}}{|V_t|}))}\\
&=\int_{V_s}\int_{V_k}K_{b_1^{-1}a,\chi_K}(x_2,y_2)b'_2(x_2)\Delta^{b_2*}_V(\frac{\chi_{V_t}}{|V_t|})(y_2)\,dx_2dy_2\\
&\lesssim C(b_1^{-1}a,\chi_{K})|V|^{-1}\int_{V_s}\int_{V_k}\frac{1}{|x_2-y_2|^m}\lesssim |K|.
\end{split}
\end{displaymath}
If $s=k$, by the first diagonal $BMO$ condition, 
\begin{displaymath}
\begin{split}
&\pair{\chi_Kb'_1\otimes\chi_{V_k}b'_2}{T(a\otimes\chi_{V_k}\Delta^{b_2*}_V(\frac{\chi_{V_t}}{|V_t|}))}\\
&=(\Delta^{b_2}_V(\frac{b^{-1}_2\chi_{V_t}}{|V_t|})|_{V_k})\pair{\chi_Kb'_1\otimes\chi_{V_k}b'_2}{T(a\otimes\chi_{V_k}b_2)}\\
&\lesssim |V|^{-1}|K||V|=|K|.
\end{split}
\end{displaymath}

For part $(2)$ and $(3)$, write
\begin{displaymath}
(2)=\sum_{s=1}^{2^m}\pair{\chi_{(3K)^c}b'_1\otimes\chi_{V_k}b'_2}{T(a\otimes\chi_{V_s}\Delta^{b_2*}_V(\frac{\chi_{V_t}}{|V_t|}))},
\end{displaymath}
and similarly for $(3)$.

If $s\neq k$, since both variables are separated,  we can use full kernel representation, and mixed H\"{o}lder-size condition for $(2)$, size condition for $(3)$. If $s=k$, we use partial kernel representation, and H\"{o}lder condition for $(2)$, size condition for $(3)$. The details can be carried out similarly as for $(1)$, and we omit them.
\end{proof}

\section{Equal/Equal, Equal/Nearby and Nearby/Nearby: $\sigma_{=/=}$}
We discuss these three cases together. When $J_1, J_2$ are near each other, the sizes of $J_1, J_2, J_1\vee J_2$ are comparable, similarly for the other variable. So by the full control lemma, in either of these three cases, it suffices to show 
\begin{displaymath}
|\pair{M_{b'}TM_b\Delta^{b_1}_{I_1}\Delta^{b_2}_{J_1}f}{\Delta^{b'_1}_{I_2}\Delta^{b'_2}_{J_2}g}|\lesssim \|\Delta^{b_1}_{I_1}\Delta^{b_2}_{J_1}f\|_{L^2}\|\Delta^{b'_1}_{I_2}\Delta^{b'_2}_{J_2}g\|_{L^2}.
\end{displaymath}

We only prove the above for the case Equal/Equal, which is the most difficult one since there is no separation on either variable. Note that for Equal/Nearby, one can use partial kernel representation and size condition to prove it, and for Nearby/Nearby, the full kernel representation and size condition will do.

Write $I_1=I_2=K, J_1=J_2=V$, and decompose the pairing into restrictions on each pair of their children, 
\begin{displaymath}
|\pair{M_{b'}TM_b\Delta^{b_1}_K\Delta^{b_2}_Vf}{\Delta^{b'_1}_K\Delta^{b'_2}_Vg}|\leq \sum_{i,s=1}^{2^n}\sum_{j,t=1}^{2^m}|\pair{M_{b'}TM_b(\chi_{K_i\times V_j}\Delta^{b_1}_K\Delta^{b_2}_Vf)}{\chi_{K_s\times V_t}\Delta^{b'_1}_K\Delta^{b'_2}_Vg}|.
\end{displaymath}

If $i\neq s, j\neq t$, by the full kernel representation and size condition,
\begin{displaymath}
\begin{split}
&|\pair{M_{b'}TM_b(\chi_{K_i\times V_j}\Delta^{b_1}_K\Delta^{b_2}_Vf)}{\chi_{K_s\times V_t}\Delta^{b'_1}_K\Delta^{b'_2}_Vg}|\\
&\lesssim |\ave{\Delta^{b_1}_K\Delta^{b_2}_Vf}_{K_i\times V_j}||\ave{\Delta^{b'_1}_K\Delta^{b'_2}_Vg}_{K_s\times V_t}|\int_{K_i\times V_j}\int_{K_s\times V_t}\frac{1}{|x_1-y_1|^n}\frac{1}{|x_2-y_2|^m}\\
&\lesssim \|\Delta^{b_1}_K\Delta^{b_2}_Vf\|_{L^2}|K|^{-1/2}|V|^{-1/2}\|\Delta^{b'_1}_K\Delta^{b'_2}_Vg\|_{L^2}|K|^{-1/2}|V|^{-1/2}|K||V|=\|\Delta^{b_1}_K\Delta^{b_2}_Vf\|_{L^2}\|\Delta^{b'_1}_K\Delta^{b'_2}_Vg\|_{L^2}.
\end{split}
\end{displaymath}

If $i\neq s, j=t$, by the partial kernel representation and size condition for the partial kernel,
\begin{displaymath}
\begin{split}
&|\pair{M_{b'}TM_b(\chi_{K_i\times V_j}\Delta^{b_1}_K\Delta^{b_2}_Vf)}{\chi_{K_s\times V_j}\Delta^{b'_1}_K\Delta^{b'_2}_Vg}|\\
&\lesssim |\ave{\Delta^{b_1}_K\Delta^{b_2}_Vf}_{K_i\times V_j}||\ave{\Delta^{b'_1}_K\Delta^{b'_2}_Vg}_{K_s\times V_j}|\int_{K_i}\int_{K_s}|K_{\chi_{V_j},\chi_{V_j}}(x_1,y_1)|\\
&\lesssim \|\Delta^{b_1}_K\Delta^{b_2}_Vf\|_{L^2}|K|^{-1/2}|V|^{-1/2}\|\Delta^{b'_1}_K\Delta^{b'_2}_Vg\|_{L^2}|K|^{-1/2}|V|^{-1/2}C(\chi_{V_j},\chi_{V_j})|K|\\
&\lesssim\|\Delta^{b_1}_K\Delta^{b_2}_Vf\|_{L^2}\|\Delta^{b'_1}_K\Delta^{b'_2}_Vg\|_{L^2}.
\end{split}
\end{displaymath}
The case $i=s, j\neq t$ is symmetric to this one.

If $i=s, j=t$, by the weak boundedness property,
\begin{displaymath}
\begin{split}
&|\pair{M_{b'}TM_b(\chi_{K_i\times V_j}\Delta^{b_1}_K\Delta^{b_2}_Vf)}{\chi_{K_i\times V_j}\Delta^{b'_1}_K\Delta^{b'_2}_Vg}|\\
&=|\ave{\Delta^{b_1}_K\Delta^{b_2}_Vf}_{K_i\times V_j}||\ave{\Delta^{b'_1}_K\Delta^{b'_2}_Vg}_{K_i\times V_j}||\pair{M_{b'}TM_b(\chi_{K_i}\otimes\chi_{V_j})}{\chi_{K_i}\otimes\chi_{V_j}}|\\
&\lesssim \|\Delta^{b_1}_K\Delta^{b_2}_Vf\|_{L^2}|K|^{-1/2}|V|^{-1/2}\|\Delta^{b'_1}_K\Delta^{b'_2}_Vg\|_{L^2}|K|^{-1/2}|V|^{-1/2} |K_i||V_j|\\
&\lesssim \|\Delta^{b_1}_K\Delta^{b_2}_Vf\|_{L^2}\|\Delta^{b'_1}_K\Delta^{b'_2}_Vg\|_{L^2}
\end{split}
\end{displaymath}

This completes this section, as well as all the cases when $\ell(I_1)\leq\ell(I_2), \ell(J_1)\leq\ell(J_2)$. Moreover, the cases when $\ell(I_1)>\ell(I_2), \ell(J_1)>\ell(J_2)$ can be dealt with symmetrically. 

\section{Mixed cases}

We now consider the mixed cases. It suffices to analyze the case when $\ell(I_1)\leq\ell(I_2), \ell(J_1)>\ell(J_2)$, and the only sub-case which is not symmetric to any of the above is the mixed Inside/Inside, which involves the boundedness of mixed paraproducts. By assumption, $I_1\subset I_2, J_2\subsetneq J_1$. Suppose $I_1\subset I_{2,1}\in\children{(I_2)}$ and $J_2\subset J_{1,2}\in\children{(J_1)}$. Split
\begin{displaymath}
\begin{split}
&\pair{M_{b'}TM_b\Delta^{b_1}_{I_1}\Delta^{b_2}_{J_1}f}{\Delta^{b'_1}_{I_2}\Delta^{b'_2}_{J_2}g}\\
&=\pair{M_{b'}TM_b\Delta^{b_1}_{I_1}\Delta^{b_2}_{J_1}f}{\chi_{I_{2,1}^c}(\Delta^{b'_1}_{I_2}\Delta^{b'_2}_{J_2}g-\ave{\Delta^{b'_1}_{I_2}\Delta^{b'_2}_{J_2}g}^{b'_1}_{I_{2,1}})}\\
&\qquad +\pair{M_{b'}TM_b\Delta^{b_1}_{I_1}\Delta^{b_2}_{J_1}f}{1(x_1)\otimes\ave{\Delta^{b'_1}_{I_2}\Delta^{b'_2}_{J_2}g}^{b'_1}_{I_{2,1}}(x_2)}\\
&=\pair{M_{b'}TM_b(\chi_{J_{1,2}^c}(\Delta^{b_1}_{I_1}\Delta^{b_2}_{J_1}f-\ave{\Delta^{b_1}_{I_1}\Delta^{b_2}_{J_1}f}^{b_2}_{J_{1,2}}))}{\chi_{I_{2,1}^c}(\Delta^{b'_1}_{I_2}\Delta^{b'_2}_{J_2}g-\ave{\Delta^{b'_1}_{I_2}\Delta^{b'_2}_{J_2}g}^{b'_1}_{I_{2,1}})}\\
&\qquad +\pair{M_{b'}TM_b\ave{\Delta^{b_1}_{I_1}\Delta^{b_2}_{J_1}f}^{b_2}_{J_{1,2}}}{\chi_{I_{2,1}^c}(\Delta^{b'_1}_{I_2}\Delta^{b'_2}_{J_2}g-\ave{\Delta^{b'_1}_{I_2}\Delta^{b'_2}_{J_2}g}^{b'_1}_{I_{2,1}})}\\
&\qquad +\pair{\chi_{J_{1,2}^c}(\delta^{b_1}_{I_1}\Delta^{b_2}_{J_1}f-\ave{\Delta^{b_1}_{I_1}\Delta^{b_2}_{J_1}f}^{b_2}_{J_{1,2}})}{M_bT^*M_{b'}\ave{\Delta^{b'_1}_{I_2}\Delta^{b'_2}_{J_2}g}^{b'_1}_{I_{2,1}}}\\
&\qquad +\pair{M_{b'}TM_b\ave{\Delta^{b_1}_{I_1}\Delta^{b_2}_{J_1}f}^{b_2}_{J_{1,2}}}{\ave{\Delta^{b'_1}_{I_2}\Delta^{b'_2}_{J_2}g}^{b'_1}_{I_{2,1}}}\\
&:=I+II+III+IV.
\end{split}
\end{displaymath}

Part $I,II,III$ can be similarly estimated as the corresponding parts in the Inside/Inside case discussed above. Note that for part $II,III$, we need to use the partial adjoint operator $T_1$ to rewrite it into a form having partial paraproduct in it, and estimate some new one-parameter $BMO$ functions, which can be achieved by the same techniques we've seen before.

To estimate part $IV$, we need to apply the boundedness of mixed paraproducts.
\begin{displaymath}
\begin{split}
\sum_{I_1\subsetneq I_2}\sum_{J_2\subsetneq J_1}IV&=\sum_{K,V}\pair{M_{b'}TM_b\ave{\Delta^{b_1}_Kf}^{b_2}_V}{\ave{\Delta^{b'_2}_Vg}^{b'_1}_K}\\
&=\sum_{K,V}\pair{T(b_1\ave{\Delta^{b_1}_Kf}^{b_2}_V\otimes b_2)}{b'_1\otimes b'_2\ave{\Delta^{b'_2}_Vg}^{b'_1}_K}\\
&=\sum_{K,V}\pair{T_1(b'_1\otimes b_2)}{b_1\ave{\Delta^{b_1}_Kf}^{b_2}_V\otimes b'_2\ave{\Delta^{b'_2}_Vg}^{b'_1}_K}.
\end{split}
\end{displaymath}

Recall that by assumption, $d=b_1\otimes b'_2, d'=b'_1\otimes b_2$, so the above is
\begin{displaymath}
\begin{split}
&\pair{T_1(d')}{\sum_{K,V}M_d\ave{\Delta^{d_1}_Kf}^{d'_2}_V\otimes \ave{\Delta^{d_2}_Vg}^{d'_1}_K}\\
&=\sum_{K,V}\pair{T_1(d')}{M_d(E^{d'_2}_V\Delta^{d_1}_Kf)(E^{d'_1}_K\Delta^{d_2}_Vg)}\\
&=\pair{\sum_{K,V}E^{d'_1*}_K((E^{d'_2}_Vf)M_d\Delta^{d_1}_K\Delta^{d_2}_VT_1(d'))}{g}\\
&=\pair{\tilde{\pi}^{d',d}_{T_1(d')}(f)}{g}\\
&\lesssim \|T_1(d')\|_{BMO}\|f\|_{L^2}\|g\|_{L^2},
\end{split}
\end{displaymath}
and $\|T_1(d')\|_{BMO}<\infty$ is one of our $BMO$ assumptions. This completes the estimate of the mixed cases.

\end{document}